\documentclass[11pt]{amsart}
\usepackage{amsmath,amsthm,amssymb,mathrsfs}
\usepackage{microtype,xspace}
\usepackage{graphicx}
\usepackage{booktabs}
\usepackage[utf8]{inputenc}
\usepackage{enumitem}
\usepackage{xcolor}
\usepackage{longtable} 
\usepackage{algorithm}
\usepackage[noend]{algpseudocode}
\usepackage{float} 
\usepackage{tablefootnote}
\usepackage{subfigure}
\usepackage{multirow}
\usepackage{stmaryrd}
\usepackage{appendix}
\usepackage{arydshln}
\usepackage{tikz-cd}
\usepackage[normalem]{ulem}
\usepackage{hyperref}
\usepackage{dsfont}
\usepackage{cleveref,cite,url}
\usepackage{hologo} 
\usepackage{comment}
\usepackage{xcolor}
\usepackage{xspace}

\hypersetup{
	colorlinks=true,
	linkcolor = cyan,
	urlcolor  = cyan,
	citecolor = cyan,
	anchorcolor = cyan
}

\newcommand{\ourset}{spartrahedron\xspace}
\newcommand{\Szero}[2]{\ensuremath{\mathcal{S}_{#1,#2}}}
\newcommand{\Sone}[2]{\ensuremath{\mathcal{S}_{#1,#2}^{\ell_1}}}
\newcommand{\Sz}[2]{\ensuremath{\mathcal{S}_{#1,#2}^{\textit{\tiny{Z}}}}}
\newcommand{\Sbs}[2]{\ensuremath{\mathcal{S}_{#1,#2}^{\textit{\tiny{BCP}}}}}
\newcommand{\Sint}[2]{\ensuremath{\mathcal{S}_{#1,#2}^{\textit{\tiny{z}}}}}

\theoremstyle{plain} 
\newtheorem{thm}{Theorem}[section]

\newtheorem{lem}{Lemma}[section]

\newtheorem{cor}{Corollary}[section]

\theoremstyle{definition}
\newtheorem{dfn}[thm]{Definition} 
\newtheorem{ex}[thm]
{Example}

\theoremstyle{remark}

\newtheorem*{exe*}{Exercise}

\newcommand{\vX}{{\mathbf{X}}}

\newcommand{\cA}{{\mathcal{A}}}

\newcommand{\cD}{{\mathcal{D}}}

\newcommand{\RR}{\mathbb{R}}
\renewcommand{\SS}{{\mathbb{S}}}

\newcommand{\NN}{{\mathbb{N}}}
\newcommand{\vzero}{\mathbf{0}}

\newcommand{\st}{\text{\, s.t.\, }}

\newcommand{\X}{\boldsymbol{X}}

\DeclareMathOperator{\rank}{rank}
\DeclareMathOperator{\corank}{corank}
\DeclareMathOperator{\codim}{codim}

\DeclareMathOperator{\val}{val}
\DeclareMathOperator{\conv}{conv}
\DeclareMathOperator{\diag}{diag}
\DeclareMathOperator{\tr}{Tr}
\DeclareMathOperator{\extr}{extr}
\DeclareMathOperator{\ACQ}{ACQ}
\DeclareMathOperator{\supp}{supp}

\begin{document}
	
	\title[Sparse PCA, Regression, and QCQP via the \MakeUppercase{\ourset}]
	{Solving Sparsity Constrained PCA, Regression, and QCQP via the \MakeUppercase{\ourset}}
	\author{Diego Cifuentes}
	\address{}
	\email{diegcif@gmail.com}
	
	\author{Zhuorui Li}
	\address{Georgia Institute of Technology \ Atlanta, GA, USA}
	\email{zli966@gatech.edu}
	
	\begin{abstract}
		Sparsity is a fundamental modeling principle in statistics, signal processing, and data science. 
		However, optimization with sparsity constraints is notoriously difficult. 
		We introduce a new convex relaxation framework for {sparse quadratically constrained quadratic programs} (QCQPs), a class that subsumes sparse regression, sparse principal component analysis (PCA), and related problems. 
		Our approach is based on a novel convex cone, the {\ourset}, which exactly characterizes sparsity at the matrix level. 
		This leads to a semidefinite programming (SDP) relaxation that is tight whenever its solution is rank-one, providing a simple certificate of global optimality. 
		We establish theoretical guarantees, including approximation bounds and exactness regions for sparse PCA and sparse ridge regression, as well as a general stability result under perturbations. 
		Numerical experiments on sparse PCA, sparse regression, RIP constant estimation, and sparse canonical correlation analysis (CCA) demonstrate the practical success of our methods. 
	\end{abstract}
	
	\maketitle
	
	\section{Introduction}
	
	Sparsity plays a central role in modern statistics, signal processing, and data science.
	By concentrating modeling capacity on a small set of salient variables, sparse models improve interpretability and generalization in high dimensions.
	In inverse problems, compressed sensing theory demonstrates that sparse signals can be reconstructed exactly or stably from highly undersampled measurements, enabling accelerated imaging and other acquisition-limited applications.
	In regression analysis, sparsity is widely promoted by techniques such as the lasso.
	Sparse principal component analysis (PCA) produces components with few nonzero loadings, retaining much of the explanatory power of classical PCA while dramatically improving interpretability.
	Across domains, sparsity acts as a powerful inductive bias that enables statistical guarantees and efficient computation.
	
	Despite extensive research, enforcing sparsity remains a challenging task.
	Practical methods often rely on heuristic surrogates such as $\ell_1$ penalization, which offer limited guarantees and are highly sensitive to regularization choices.
	This paper introduces a new framework for handling sparsity in a broad class of optimization problems---including sparse regression and sparse PCA---using convex optimization tools that provide rigorous guarantees on solution quality.
	
	Our starting point is the general family of sparsity-constrained {quadratically constrained quadratic programs} (QCQPs):
	\begin{equation}\label{sparse_qcqp}\tag{\small{P}}
		\min_{x\in\RR^n}\quad x^T C x
		\quad\st\quad \|x\|_0\leq k,\quad
		x^T A_i x = b_i,\quad i\in [m],
	\end{equation}
	where $C,A_i\in \SS^n$ are symmetric matrices, $b_i \in \RR$ are scalars, $k \in \NN$ is an integer, and $\|\cdot\|_0$ denotes the number of nonzero entries.
	When the sparsity constraint is removed, the problem reduces to a standard QCQP, a classical yet challenging optimization class with numerous applications \cite{nemirovski1999maximization, ben2001lectures, bao2011semidefinite, aholt2012qcqp, luo2010semidefinite, cifuentes2021convex, cifuentes2022local}.
	While QCQPs are already NP-hard in general, the addition of a sparsity constraint makes the problem significantly more difficult: sparse QCQPs are NP-hard even when $m=0$ \cite{bienstock1996computational, natarajan1995sparse, pardalos1991quadratic}, whereas QCQPs remain tractable for $m \leq 1$ \cite{sturm2003cones, pong2014generalized, ben2001lectures}.
	
	Two important instances of \eqref{sparse_qcqp} illustrate its broad relevance.
	The first is {sparse PCA}, which seeks a sparse unit vector $x \in \RR^n$ that maximizes correlation with a covariance matrix $\Sigma \in \SS^n$:
	\begin{equation}\label{sparse_pca}\tag{\small{P-spca}}
		\max_{x\in\RR^n}\quad
		x^T \Sigma x\quad\st\quad \|x\|_0\leq k,\quad \|x\|_2^2 = 1 
	\end{equation}
	More generally, the $\ell$-sparse PCA problem seeks $\ell$ orthonormal sparse vectors,
	and can be formulated as a sparse QCQP as follows:
	\begin{equation*}
		\max_{X\in\RR^{n\times \ell}}\quad
		\Sigma \bullet X X^T \quad\st\quad \|X\|_0\leq k,\quad X^T X = I_\ell 
	\end{equation*}
	where $A \bullet B = \tr(B^T A)$ denotes the trace inner product.
	
	The second is {sparse regression}.
	Given $A \in \RR^{m\times n}$ and $y \in \RR^m$, the goal is to estimate $x \in \RR^n$ minimizing $\|Ax-y\|_2^2$ subject to sparsity.
	We focus on a sparse variant of ridge regression, which adds an $\ell_2$ penalty:
	\begin{equation}\label{sparse_lr}\tag{\small{P-sridge}}
		\min_{x\in \RR^n} \quad \tfrac 1 m \|A x - y\|^2_2 + \alpha \|x\|^2_2
		\quad\st\quad \|x\|_0 \leq k,
	\end{equation}
	for a fixed regularization parameter $\alpha \geq 0$.
	Sparse regression is a cornerstone in statistics and data analysis,
	often approached through lasso.
	
	\subsection*{Our approach.}
	We propose a new semidefinite relaxation for sparse QCQPs.
	Convex relaxations based on semidefinite programming (SDP) have been extensively studied for QCQPs, but far less so for their sparse counterparts.
	Although $\ell_1$ regularization offers a proxy for sparsity, such methods typically lack theoretical guarantees.
	By contrast, our relaxation provides provable approximation bounds---for both sparse PCA and sparse regression---and is grounded in a new convex cone, the \emph{\ourset}.
	
	The $k$-\ourset in $\SS^n$ is defined as
	\begin{equation}
		\label{eq:ourset}
		\Szero{n}{k} := \{X \in \mathbb{S}^n:
		k\diag X \succeq X \succeq 0\}
	\end{equation}
	where $\diag X$ retains only the diagonal entries of $X$, and $A \succeq B$ denotes positive semidefiniteness of $A-B$.
	This set captures sparsity exactly:
	\begin{equation}\label{eq:property}
		x x^T \in \Szero{n}{k} \quad \iff \quad \|x\|_0 \leq k.
	\end{equation}
	This leads to the following SDP relaxation for \eqref{sparse_qcqp}:
	\begin{equation}\label{sparse_sdp}\tag{\small{Q}}
		\begin{aligned}
			\min_{X \in \mathbb{S}^n}\quad C \bullet X
			\quad\text{s.t.}\quad A_i \bullet X = b_i,\;  i \in [m],\quad
			X\in \Szero{n}{k}
		\end{aligned}
	\end{equation}
	The relaxation is \emph{exact} or \emph{tight} when any optimal solution of \eqref{sparse_sdp} is a rank one matrix $X = x x^T$, in which case $x$ is optimal for \eqref{sparse_qcqp}. 
	The SDP \eqref{sparse_sdp} can be viewed as a sparsity-aware analogue of the classical Shor relaxation for QCQPs.
    The induced SDP relaxation \eqref{sparse_sdp} exhibits robustness to small perturbations when the relaxation is tight; see \Cref{thm:sparse_local_stability} for details.
	We also explore other conic relaxations that might be tighter than~\Szero{n}{k}. In particular, we introduce the SDP~\eqref{sparse_sdp_plus}, which yields stronger guarantees than~\eqref{sparse_sdp} at the expense of increased computational cost.
	
	\subsection*{Contributions and Organization}
	
	The main contribution of this paper is the introduction of the \emph{\ourset} and its use in convex relaxations for sparse QCQPs.
	Our framework applies broadly, encompassing problems such as sparse PCA, sparse regression, estimation of restricted isometry property (RIP) constants, and sparse canonical correlation analysis (CCA).
	
	Our specific contributions are as follows:
	\begin{itemize}
		\item \emph{Geometric foundations (\Cref{section:ourset}).}
		We study the geometry of the \ourset, compare it to other convex relaxations, and prove its fundamental sparsity property~\eqref{eq:property}.
		We also show that the relaxation~\eqref{sparse_sdp} is always exact when $n=3$ and $k=2$.
		
		\item \emph{Stability under perturbations (\Cref{section:stability}).}
		We prove that our relaxation is robust: if it is exact for a given instance, it remains exact under small data perturbations.
		This extends the local exactness results of~\cite{cifuentes2022local} to the sparse setting.
		
		\item \emph{Sparse PCA (\Cref{section:spca}).}
		We establish that the approximation ratio of our relaxation is bounded by $\min\{k,\, n/k,\, r\}$, where $r$ is the rank of the objective matrix.
		We further identify an exactness region for the SDP and apply these results to spiked Wigner and Wishart models.
		
		\item \emph{Sparse ridge regression (\Cref{section:ridge}).}
		We prove an $O(k)$ bound on the approximation ratio and identify an exactness region in the overdetermined regime.
		
		\item \emph{Numerical experiments (\Cref{section:experiments}).}
		We provide experiments on sparse PCA, sparse regression, RIP estimation, and sparse CCA.
		Across all tasks, our SDP consistently outperforms both heuristic methods and existing SDP-based approaches.
	\end{itemize}
	
	\subsection*{Related Works}
	
	Given their broad practical reach, sparse QCQPs have motivated a wide range of algorithms. There are three main classes: (i) approximation and heuristic methods that scale well but may lack guarantees; (ii) mixed-integer formulations (MIP/MINLP) which may provide global optimality certificates at higher computational cost, and (iii) convex relaxations that offer tractable bounds and, under suitable conditions, exact recovery.
	
	\textit{Approximation (heuristic) methods.}
	A variety of first-order methods have been developed for special cases of sparse QCQP. Inspired by compressed sensing and sparse regression, thresholding-based approaches \cite{journee2010generalized, hein2010inverse,yuan2013truncated,luss2013conditional,yuan2018gradient} have been proposed; beyond simple truncation rules, greedy schemes \cite{tropp2007signal,bahmani2013greedy,tropp2004greed,xie2020scalable} iteratively expand and prune the support. These methods are computationally efficient, and many enjoy guarantees of convergence to a first-order stationary point; however, most existing algorithms only consider simple constraint structures such as a single sparsity constraint coupled with an $\ell_2$-norm bound, so extending to general sparse QCQPs with multiple quadratic constraints is substantially more challenging. Moreover, they typically do not attain the global optimum and do not furnish optimality certificates. The following approaches are developed to find and certify 
	exact solutions.
	
	\textit{Mixed-integer programming.}
	With advances in mixed-integer programming (MIP), numerous methods have been proposed for special cases of \eqref{sparse_qcqp}, notably portfolio selection, sparse regression and sparse PCA. MIP and mixed-integer SDP (MISDP) formulations \cite{gally2018framework,li2025exact,bertsimas2022solving,dey2022using} and Branch-and-bound algorithms \cite{moghaddam2005spectral,berk2019certifiably}  have been employed to solve sparse PCA. In the absence of quadratic constraints, mixed-integer formulations \cite{zheng2014improving} and branch-and-bound algorithms \cite{bertsimas2009algorithm} have been developed for sparse quadratic programs which have applications in portfolio and subset selection. Sparse regression is another important special case of \eqref{sparse_qcqp} where MIP techniques can be applied \cite{bertsimas2020sparse,bertsimas2021sparse,atamturk2025rank}. For the general formulation \eqref{sparse_qcqp}, to the best of our knowledge there is no broadly applicable mixed-integer programming framework; existing MIP techniques are tailored to special cases with additional structure (e.g., a single quadratic constraint, convexity, or application-specific modeling). Developing a unified, scalable MIP approach for \eqref{sparse_qcqp} therefore remains an open challenge. While some of these approaches can certify global optimality for specific applications, their worst-case computational complexity is exponential, which limits scalability in practice. To obtain more tractable algorithms, the aforementioned mixed-integer formulations are often relaxed to convex optimization problems; we discuss these convex relaxations in the next section.
	
	\textit{Convex Relaxation.}
	Numerous convex relaxations have been proposed for special cases of \eqref{sparse_qcqp}. We first consider semidefinite relaxations: for sparse PCA, d’Aspremont et al. \cite{d2004direct} introduced an SDP using an $\ell_1$-based surrogate for cardinality, with subsequent work establishing optimality conditions and approximation guarantees \cite{d2008optimal,d2014approximation}. Although devised for sparse PCA, this framework extends to broader instances of \eqref{sparse_qcqp}, such as SVM formulations \cite{chan2007direct}. In this paper, we investigate a different SDP relaxation tailored to \eqref{sparse_qcqp}. A second line of convexification arises by relaxing mixed-integer models which yield tractable formulations that have been applied to sparse PCA \cite{dey2022using,li2025exact,bertsimas2022solving}, sparse quadratic programs \cite{zheng2014improving}, and sparse regression \cite{bertsimas2020sparse,bertsimas2021sparse,atamturk2025rank}. However, even for special structures, prior works neither systematically characterize when these relaxations are exact, nor have concise way to certify the global optimality. In this paper, our SDP relaxation gives the global optimal solution of \eqref{sparse_qcqp} if and only if the solution is of rank-one, which is easy to check. Furthermore, we establish sufficient conditions ensuring exactness for two concrete applications—sparse PCA and sparse regression—and further derive a general stability result for \eqref{sparse_qcqp} that quantifies robustness under perturbations.
	
	\subsection*{Notation}
	Let $\RR^n$ be the space of real vectors of length~$n$, $\mathbb{S}^k$ the space of $k \times k$ real symmetric matrices, and $\mathbb{S}^k_+$ the set of $k \times k$ PSD matrices.  Given a vector $x\in\RR^n$, $\|x\|_0$ denotes the the number of nonzero entries; $\|x\|_1$ denotes the $\ell_1$-norm, and $\|x\|$ denotes the $\ell_2$-norm; Let \(S \subseteq [n]\). Define the restricted norm \(\|x\|_{S} := \big(\sum_{i \in S} x_i^{2}\big)^{1/2}\). For a matrix \(X\), let \(X_{S}\) denote the principal minor of \(X\) indexed by \(S\), and for a vector \(x\), let \(x_{S}\) denote the subvector consisting of the entries of \(x\) indexed by \(S\). Given $X \in \mathbb{S}^k$, the notation $X \succeq 0$ means that $X$ is PSD. Given a matrix $X \in \mathbb{S}^k$, we let $\diag(X) \in \mathbb{S}^k$ be a matrix whose diagonal is the same as $X$ and other entries are all zeros; let $\|X\|_1 = \sum_{1\leq i,j\leq k} |X_{ij}|$; let $\|X\|_\infty = \max_{1\leq i,j\leq k} |X_{ij}|$; and $\|X\|$ denotes the spectral norm.

	\section{The \ourset and other convex relaxations}\label{section:ourset}

	For the study of sparse QCQPs,
	we need to work with quadratic functions restricted to sparse vectors~$x$.
	This motivates the following nonconvex set:
	\[
	Q_{n,k} = \{xx^T\in\SS^n: \|x\|_0\leq k\}
	\]
	We are interested in obtaining tractable convex relaxations of $Q_{n,k}$.
	In this section we study three different such relaxations.
	The first convex relaxation is the \ourset $\Szero{n}{k}$.
	The second relaxation relies on the $\ell_1$-norm:
	\[
	\Sone{n}{k} =
	\{X\in \mathbb{S}^n : \|X\|_1 \leq k \tr X, \; X \succeq 0\}
	\]
	The third one is a variant of the method from \cite{li2025exact}:
	\begin{equation*}
		\begin{split}
			\Sz{n}{k} = \{X\in\SS^n:\; 
			&\exists z\in \RR^n,\st \|X_{i,:}\|^2\leq X_{ii}z_i,\;\|X_{i,:}\|_1^2\leq kX_{ii}z_i,\\
			&0\leq z_i\leq \tr(X), i\in[n], \sum_{i=1}^n z_i = k \tr(X),X\succeq 0\}
		\end{split}
	\end{equation*}
    The above relaxation motivates the following SDP formulation, 
    \begin{equation}\label{sparse_sdp_plus}\tag{\small{\(\text{Q}^+\)}}
		\min_{X \in \mathbb{S}^n}\quad C \bullet X
		\quad\text{s.t.}\quad A_i \bullet X = b_i,\;  i \in [m],\quad
		X\in \Szero{n}{k}\cap\Sz{n}{k},
	\end{equation}
    and we show later that \eqref{sparse_sdp_plus} yields a strictly tighter relaxation than \eqref{sparse_sdp}.
    
    This section explains geometric properties of these relaxations. We also discuss how such cones can be used for sparse QCQPs.
	
	\subsection{Comparison of the cones}
	
	We start by showing property \eqref{eq:property}, which implies that $\Szero{n}{k}$ is a convex relaxation of $Q_{n,k}$.
	
	\begin{lem} \label{lem:cardinality}
		For a vector $x\in \RR^n$, then $x x^T \in \Szero{n}{k}$ if and only if $\|x\|_0\leq k$. 
		In particular,
		\(
		\conv(Q_{n,k}) \subseteq \Szero{n}{k}
		\) 
	\end{lem}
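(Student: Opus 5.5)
We need to show that $xx^T \in \Szero{n}{k}$ if and only if $\|x\|_0\le k$. Since $xx^T\succeq 0$ always holds, the condition reduces to $k\,\diag(xx^T)\succeq xx^T$, i.e. $k D \succeq xx^T$ with $D=\diag(x_1^2,\dots,x_n^2)$. Let $S=\supp(x)$ and $s=|S|=\|x\|_0$. Both $D$ and $xx^T$ vanish outside the $S\times S$ block, so the inequality is equivalent to $kD_S\succeq x_Sx_S^T$, where $D_S$ is positive definite. The plan is to turn this into a scalar condition by a congruence or Schur complement.

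The standard fact is that for $D_S\succ 0$, $kD_S - x_Sx_S^T\succeq 0$ iff $x_S^T (kD_S)^{-1} x_S\le 1$. One can see this via the Schur complement of the block matrix $\begin{pmatrix} kD_S & x_S\\ x_S^T & 1\end{pmatrix}$, or by congruence with $D_S^{-1/2}$, which reduces the claim to $kI\succeq uu^T$ with $u=D_S^{-1/2}x_S$. Now compute: $u_i = x_i/|x_i| = \pm1$ for $i\in S$, so $\|u\|^2=s$. Equivalently, $x_S^TD_S^{-1}x_S=\sum_{i\in S} x_i^2/x_i^2 = s$. The condition therefore becomes $s/k\le 1$, i.e. $\|x\|_0\le k$, which proves both directions at once. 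The case $x=0$ is trivial.

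For the "in particular" statement: every element of $Q_{n,k}$ lies in $\Szero{n}{k}$ by the equivalence just proved. Moreover, $\Szero{n}{k}$ is convex, being the intersection of two PSD-cone preimages under the linear maps $X\mapsto X$ and $X\mapsto k\diag X - X$. Hence $\conv(Q_{n,k})\subseteq\Szero{n}{k}$.

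No step is a real obstacle. The only care needed is in restricting to the support, so that $D_S$ is invertible before applying the Schur complement or congruence; off the support the corresponding rows and columns are identically zero, so they contribute nothing to the inequality.
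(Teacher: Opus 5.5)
Your proof is correct and is essentially the paper's computation: both reduce membership to comparing $x_S^T D_S^{-1} x_S = \|x\|_0$ with $k$. The paper does the two directions by hand, using the test vector $y_i = 1/x_i$ on the support (which is exactly $D_S^{-1}x_S$) for necessity and Cauchy--Schwarz against the support indicator for sufficiency, while your congruence by $D_S^{-1/2}$ to $kI - uu^T$ with a sign vector $u$ gives both directions at once.
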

	
	\begin{proof}
		Let $X:=xx^T$.
		Let us show that $k \diag X\succeq X$ if and only if $\|x\|_0 \leq k$.
		
		Assume first that $\|x\|_0 > k$.
		Let $y \in \RR^n$ with entries
		$y_i = \frac{1}{x_i}$ for $i \in \supp(x)$
		and $y_i = 0$ for $i \notin \supp(x)$.
		Then
		\[
		y^T (k \diag X) y =
		k\sum_{i=1}^n (x_iy_i)^2
		= k \|x\|_0
		< \|x\|_0^2 = (y^Tx)^2
		= y^T X y
		\]
		and hence \(k \diag X \not\succeq X\).
		
		Assume next that $\|x\|_0 \leq k$.
		Let $y \in \RR^n$ be arbitrary.
		Let \(z = x \circ y\) be their Hadamard product.
		Let \(\mathds{1}_x\) be the indicator vector of $\supp(x)$.
		Then
		\[
		y^T (k \diag X) y
		= k \|z\|_2^2
		\geq \|\mathds{1}_x\|_2^2 \cdot \|z\|_2^2 \geq (\mathds{1}_x^T z)^2 = (y^Tx)^2
		= y^T X y
		\]
		and hence \(k \diag X\succeq X\).

		It follows that $Q_{n,k} \subseteq \Szero{n}{k}$,
		so by convexity
		$\conv(Q_{n,k}) \subseteq \Szero{n}{k}$.
	\end{proof}

	The next theorem characterizes the relationship between $\Szero{n}{k}$ is and \(\conv Q_{n,k}\) when~$k=2$.
	
	\begin{thm}\label{thm:special_case_n3_k2}
		We have \(\conv(Q_{3,2}) = \Szero{3}{2}\) and
		\( \conv(Q_{n,2}) \subsetneq \Szero{n}{2}\) for $n \geq 4$.
	\end{thm}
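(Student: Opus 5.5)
The plan is to work with the definition of $\Szero{n}{k}$ and with duality. The inclusion $\conv(Q_{3,2})\subseteq\Szero{3}{2}$ is \Cref{lem:cardinality}. For the reverse inclusion I will show that the dual cones coincide.

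\textbf{Dual cones.} First, $\conv(Q_{3,2})$ is the closed cone of sums of PSD matrices supported on $2\times 2$ principal blocks. Its dual is
\[
\{M\in\SS^3 : \text{every } 2\times 2 \text{ principal submatrix of } M \text{ is PSD}\}.
\]
Second, $\Szero{n}{2}=\SS^n_+\cap\{X: 2\diag X - X\succeq 0\}$. Since $\diag$ is self-adjoint, its dual cone is the closure of $\{A+2\diag B - B : A,B\succeq 0\}$. So it suffices to show the following: every $M\in\SS^3$ whose $2\times 2$ principal minors are PSD can be written as $M=A+2\diag B-B$ with $A,B\succeq 0$.

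\textbf{Normalizing $M$.}
\begin{itemize}
\item If some $M_{ii}=0$, then row $i$ vanishes because the $2\times2$ minors are PSD. The problem reduces to a $2\times 2$ matrix, which is PSD, so we take $A=M$ and $B=0$.
\item Otherwise conjugate by $D^{-1/2}$ with $D=\diag M$. The representation $A+2\diag B-B$ is preserved under diagonal congruence, so we may assume $M=I+F$, with $F$ having zero diagonal and off-diagonal entries $(f_{12},f_{13},f_{23})\in[-1,1]^3$.
\end{itemize}

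\textbf{Key step: a convex-hull argument on the cube.}
\begin{itemize}
\item The cube $[-1,1]^3$ is the convex hull of its $8$ corners.
\item The four corners with $f_{12}f_{13}f_{23}=1$ are of the form $vv^T$ with $v\in\{\pm1\}^3$. These lie in the elliptope, i.e. they are PSD correlation matrices, so they are of type $A$.
\item The four corners with $f_{12}f_{13}f_{23}=-1$ have off-diagonal entries $-s_is_j$ for some sign vector $s$. These are exactly the matrices $2I-ss^T=2\diag B - B$ with $B=ss^T\succeq 0$.
\end{itemize}
Hence every unit-diagonal $M=I+F$ is a convex combination of PSD matrices and matrices of the form $2\diag B-B$. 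Since both families are cones, this gives the required decomposition, and therefore $\conv(Q_{3,2})=\Szero{3}{2}$. The only thing to double-check is the bookkeeping of the diagonal: all corner matrices have unit diagonal, so the convex combination does too.

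\textbf{Strictness for $n\ge 4$.} Padding with zero rows and columns embeds both $Q_{4,2}$ and $\Szero{4}{2}$ into the corresponding sets for larger $n$, and the separating matrix $M$ extends by zeros. So it suffices to treat $n=4$. There I will look for a separating pair $(X,M)$ of the following form:
\begin{itemize}
\item $X=I+E$, where $E$ has zero diagonal and spectrum in $[-1,1]$. This is exactly the condition $X\in\Szero{4}{2}$ after normalization.
\item $M=I-S$, where $S$ is a zero-diagonal sign matrix. Then every $2\times 2$ principal minor of $M$ is PSD, so $M$ lies in the dual of $\conv(Q_{4,2})$.
\end{itemize}
We need $M\bullet X=4-\tr(SE)<0$. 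Since $\tr(SE)\le\|S\|_*$, this requires a sign pattern with $\|S\|_*>4$. The natural first candidate is the signed $K_4$ with one negative edge. For it I would take $E$ to be the projection of $U\operatorname{sign}(\Lambda)U^T$ (from the spectral decomposition of $S$) onto zero-diagonal matrices, rescaled to keep its spectrum in $[-1,1]$, and then verify numerically and then exactly that $\tr(SE)>4$.

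The naive choices (the $4$-cycle, or $K_4$ with all edges $-1$) give exactly $0$, so the main obstacle is choosing the right sign pattern and $E$. If the signed $K_4$ fails, I would instead solve the small SDP $\max\{\tr(SE)\}$ over admissible $E$ for each of the few sign patterns up to symmetry, and extract a rational certificate.
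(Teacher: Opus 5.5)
\paragraph{The case $n=3$.} Your argument is correct and takes a genuinely different route from the paper. The paper writes a point of $\conv(Q_{3,2})$ as a sum of three rank-one pieces supported on $2\times 2$ blocks. It parametrizes these trigonometrically, locates the boundary through critical points of the parametrization, and matches it against $\det(2\diag V-V)=0$ and $\det V=0$. Your dual argument needs no boundary analysis and is shorter. After diagonal scaling, the unit-diagonal slice of $\conv(Q_{3,2})^*$ is the cube $[-1,1]^3$. Each of its eight vertices is either $vv^T$ (a balanced signed triangle) or $2I-ss^T=2\diag(ss^T)-ss^T$ (an antibalanced one). Hence every dual element lies in $\SS^3_+ +\{2\diag B-B: B\succeq 0\}\subseteq(\Szero{3}{2})^*$. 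It also explains why $n=3$ is special: on $K_4$ some sign patterns are neither balanced nor antibalanced. You should state two small points explicitly. First, you use only the easy inclusion $\{A+2\diag B-B\}\subseteq(\Szero{3}{2})^*$. Second, $\conv(Q_{3,2})$ is closed, being a finite sum of closed cones inside the pointed cone $\SS^3_+$. Closedness is what lets $\conv(Q_{3,2})^*\subseteq(\Szero{3}{2})^*$ yield $\Szero{3}{2}\subseteq\conv(Q_{3,2})$.

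\paragraph{The case $n\ge 4$.} Here there is a gap. The strict inclusion \emph{is} the existence of a witness, and you never produce one; you defer it to a numerical search with a fallback plan. Your setup ($X=I+E$, $M=I-S$, target $\tr(SE)>4$) is sound, and your first candidate already works, with a simpler $E$ than the projected sign matrix.

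Take $S$ to be the signed $K_4$ with only $s_{12}=-1$. The vectors $(1,-1,0,0)$ and $(0,0,1,-1)$ are eigenvectors with eigenvalues $1$ and $-1$. On $\mathrm{span}\{(1,1,0,0),(0,0,1,1)\}$, $S$ acts as $\left(\begin{smallmatrix}-1&2\\2&1\end{smallmatrix}\right)$, with eigenvalues $\pm\sqrt5$. So $E=S/\sqrt5$ is admissible, and
\[
M\bullet X=4-\tr(S^2)/\sqrt5=4-12/\sqrt5<0 .
\]
Conjugating by $\diag(1,-1,1,-1)$ turns $S$ into the paper's $M_4$, so your witness is exactly the paper's $\sqrt5\, I+M_4$.

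The two proofs differ only in how non-membership is certified. The paper characterizes $\conv(Q_{n,2})$ as the scaled diagonally dominant matrices and applies Perron--Frobenius, comparing $\rho(M_4)=\sqrt5$ with $\rho(|M_4|)=3$. Your explicit dual certificate $I-S$ needs neither lemma and is more self-contained. But it must actually be written down and checked, as above.
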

    
	\begin{proof}
		See \Cref{append:thm_special_case_k2}
	\end{proof}
	
	The following theorem indicates that \(\conv(Q_{n,k}) \subseteq \Sz{n}{k}\)
	\begin{lem}\label{lem:Sz_rank_one}
		For a vector \(x \in \RR^n\), then \(xx^T\in \Sz{n}{k}\) if and only if \(\|x\|_0 \leq k\).
	\end{lem}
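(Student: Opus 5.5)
For $X = xx^T$ we have $X_{ii} = x_i^2$, $\tr X = \|x\|^2$ and $X_{i,:} = x_i x^T$. Hence $\|X_{i,:}\|^2 = x_i^2\|x\|^2$ and $\|X_{i,:}\|_1^2 = x_i^2\|x\|_1^2$. The plan is to prove both directions by computing these quantities directly and then choosing, or constraining, the auxiliary vector $z$. The degenerate case $x=0$ is immediate (take $z=0$), so assume $x\neq 0$ throughout.

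\emph{Sufficiency.} Suppose $\|x\|_0\le k$ and let $S=\supp(x)$, so $|S|\le k$. Set $z_i=\tr X=\|x\|^2$ for $i\in S$. Then the first inequality holds with equality: $\|X_{i,:}\|^2 = x_i^2\|x\|^2 = X_{ii}z_i$. The second inequality reduces to $\|x\|_1^2\le k\|x\|^2$, which is Cauchy--Schwarz on a support of size at most $k$. For $i\notin S$ both sides of both inequalities vanish, so $z_i$ there is free within $[0,\tr X]$.

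The remaining issue is the sum constraint. We need $\sum_i z_i = k\|x\|^2$, but the entries on $S$ only contribute $|S|\,\|x\|^2$. Assuming $k\le n$, we distribute the deficit $(k-|S|)\|x\|^2$ among the $n-|S|\ge k-|S|$ indices outside $S$, for instance setting exactly $k-|S|$ of them equal to $\|x\|^2$. All bounds $0\le z_i\le \tr X$ are then satisfied, and $X\succeq0$ holds trivially.

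\emph{Necessity.} Suppose $xx^T\in\Sz{n}{k}$ with witness $z$. For each $i\in S$ we have $x_i^2>0$, so dividing the first inequality by $x_i^2$ gives $z_i\ge\|x\|^2$. Summing over $S$ yields
\[
|S|\,\|x\|^2 \le \sum_{i\in S} z_i \le \sum_{i=1}^n z_i = k\|x\|^2 ,
\]
where the middle inequality uses $z_i\ge 0$. Therefore $|S|\le k$, i.e.\ $\|x\|_0\le k$. The $\ell_1$ constraint is not needed in this direction.

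The main obstacle is only bookkeeping in the sufficiency direction: the equality $\sum_i z_i = k\tr X$ must be met exactly while respecting $z_i\le\tr X$. This is handled by padding $z$ outside the support, which requires $k\le n$; that is the implicit standing assumption, since otherwise the sparsity constraint is vacuous.

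As a consequence, $Q_{n,k}\subseteq\Sz{n}{k}$. Since $\Sz{n}{k}$ is convex (its defining constraints are rotated second-order-cone inequalities $\|v\|^2\le ab$ together with linear constraints), we obtain $\conv(Q_{n,k})\subseteq\Sz{n}{k}$, as claimed before the lemma.
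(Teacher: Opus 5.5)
Your proof is correct. Your necessity direction is the same argument the paper gives: on $\supp(x)$ the row-norm constraint $x_i^2\|x\|^2\le x_i^2 z_i$ forces $z_i\ge \tr X$, and this contradicts $\sum_i z_i = k\tr X$ once $\|x\|_0>k$.

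You actually state that step more carefully than the paper does. The paper asserts that $z_i=0$ is forced off the support, and hence that $\sum_i z_i = p\tr X$ exactly. Neither is forced. Only the inequality $\sum_i z_i\ge p\tr X$ is needed, and it follows from $z_i\ge 0$ as in your version.

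The paper's proof never addresses the sufficiency direction, so your padding construction supplies the missing half. That construction sets $z_i=\tr X$ on the support and on $k-|S|$ further indices, and checks the $\ell_1$ constraint by Cauchy--Schwarz.

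Your caveat $k\le n$ is genuinely required. For $k>n$, the constraints $z_i\le \tr X$ and $\sum_i z_i=k\tr X$ force $\tr X=0$. Then $\Sz{n}{k}=\{0\}$, and the ``if'' direction fails for every nonzero $x$.
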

    
	\begin{proof}
		See \Cref{append:lem_Sz_rank_one}.
	\end{proof}
	
	We next show that \(\Sz{n}{k} \subseteq \Sone{n}{k}\). Consequently, $\Szero{n}{k}^{\ell_1}$ is also a relaxation of~$Q_{n,k}$.
	
	\begin{lem}\label{lem:Q_subset_Sone}
		We have that
		\(
		\Sz{n}{k} \subseteq \Sone{n}{k} 
		\) 
	\end{lem}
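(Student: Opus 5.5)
Let $X \in \Sz{n}{k}$ and let $z\in\RR^n$ be a witness for its membership. Since $X\succeq 0$ holds by assumption, the only thing to prove is $\|X\|_1 \le k\tr X$. I will sum the row-wise $\ell_1$ constraints, using the Cauchy--Schwarz inequality to absorb the product $X_{ii}z_i$.

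First, for each $i\in[n]$, the constraint $\|X_{i,:}\|_1^2\le kX_{ii}z_i$ together with $X_{ii}\ge 0$ and $z_i\ge 0$ gives
\[
\|X_{i,:}\|_1 \le \sqrt{k}\,\sqrt{X_{ii}}\,\sqrt{z_i}.
\]
Second, I sum over $i$ and apply Cauchy--Schwarz to the vectors $(\sqrt{X_{ii}})_i$ and $(\sqrt{z_i})_i$:
\[
\|X\|_1=\sum_{i=1}^n \|X_{i,:}\|_1 \le \sqrt{k}\Bigl(\sum_{i=1}^n X_{ii}\Bigr)^{1/2}\Bigl(\sum_{i=1}^n z_i\Bigr)^{1/2}
= \sqrt{k}\,\sqrt{\tr X}\,\sqrt{k\tr X} = k\tr X ,
\]
where the last step uses the constraint $\sum_i z_i = k\tr X$. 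This shows $X\in\Sone{n}{k}$.

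There is no substantive obstacle. The only care needed is that all square roots are of nonnegative quantities, which holds because $X\succeq 0$ forces $X_{ii}\ge 0$ and the constraints impose $z_i\ge 0$. Neither the constraints $\|X_{i,:}\|^2\le X_{ii}z_i$ nor the upper bounds $z_i\le\tr X$ are used.

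Combined with \Cref{lem:Sz_rank_one}, this gives $Q_{n,k}\subseteq \Sz{n}{k}\subseteq\Sone{n}{k}$. Since $\Sone{n}{k}$ is convex, it follows that $\conv(Q_{n,k})\subseteq\Sone{n}{k}$.
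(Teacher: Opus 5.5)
Your proof is correct and is essentially the paper's own argument: bound each row by $\|X_{i,:}\|_1\le\sqrt{kX_{ii}z_i}$, sum over $i$, apply Cauchy--Schwarz, and use $\sum_i z_i=k\tr X$. Your added remarks are accurate: the nonnegativity needed for the square roots does hold, the constraints $\|X_{i,:}\|^2\le X_{ii}z_i$ and $z_i\le\tr X$ are not needed, and the consequence $\conv(Q_{n,k})\subseteq\Sone{n}{k}$ follows.
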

	\begin{proof}
		Suppose \(X \in \Sz{n}{k}\), it is enough to show that \(\|X\|_1\leq k\tr(X)\). Observe that
		\begin{equation*}
			\|X\|_1 \leq \sum_{i=1}^n\sqrt{kX_{ii} z_i}\leq \sqrt{k} \sqrt{\textstyle \sum_{i=1}^n X_{ii}} \sqrt{\textstyle\sum_{i=1}^n z_i} \leq k \tr(X)
		\end{equation*}
		Hence, it is true that $\Sz{n}{k} \subseteq \Sone{n}{k}$.
	\end{proof}
	
	We proceed to compare $\Szero{n}{k}$ with $\Sone{n}{k}$ and $\Sz{n}{k}$.
	The next example and theorem indicate that $\Sone{n}{k}$ and $\Sz{n}{k}$ are not better than $\Szero{n}{k}$.
	
	\begin{ex}[$n=3,k=2$]
		In order to visualize $\Szero{3}{2}$, we will restrict ourself to the 3-dimensional hyperplane
		\[
		H = \Big\{\Big(\begin{smallmatrix}
			1 & a & b\\
			a & 1 & c\\
			b & c & 1
		\end{smallmatrix}\Big): a,b,c \Big\}\in \SS^n
		\]
		\Cref{fig:comparison_of_convex_sets} shows that $\Szero{3}{2}\cap H \subseteq \Sz{3}{2}\cap H\subseteq \Sone{3}{2}\cap H$.
		\begin{figure}[t]
			\centering
			
			\subfigure[$\Szero{3}{2} \cap H$]{
				\label{fig:k_sparse}
				\includegraphics[width=0.25\linewidth]{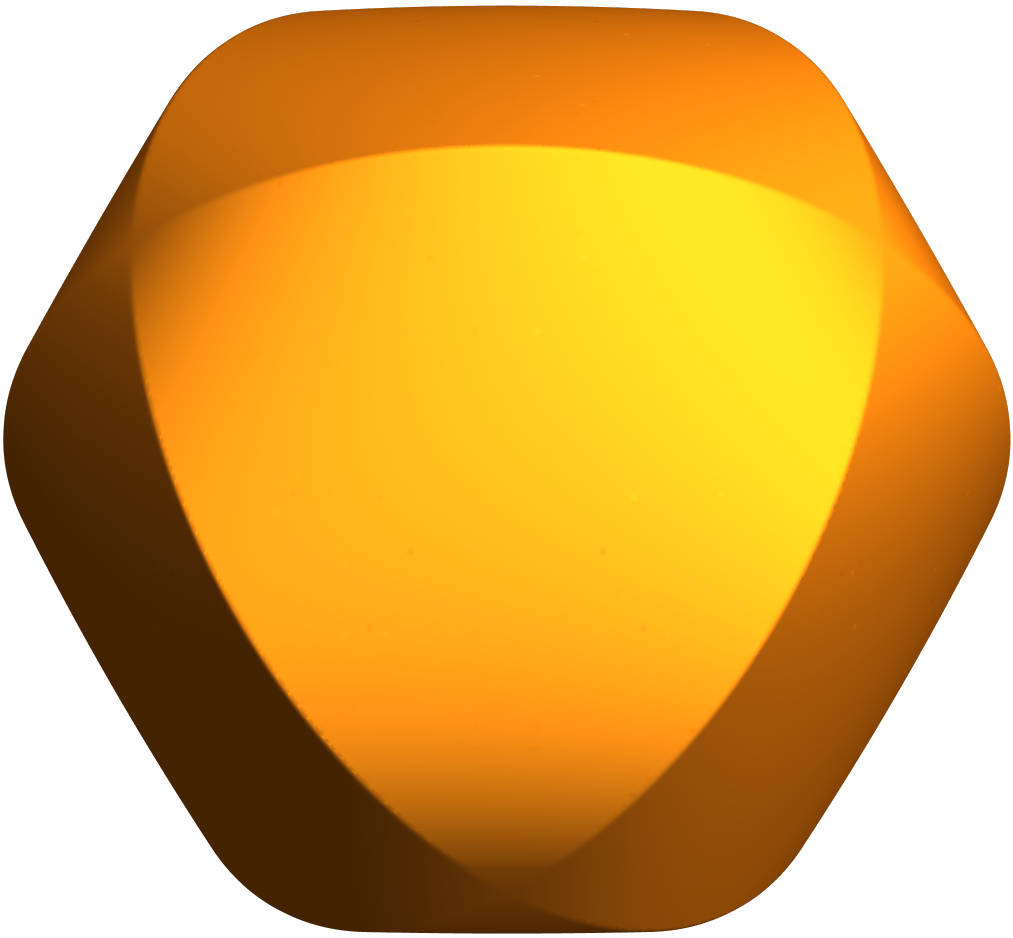}}
			\hspace{0.5em}
			\begin{minipage}[c]{0.02\linewidth}
				\centering \raisebox{15ex}{\Large$\subseteq$}
			\end{minipage}
			\hspace{0.5em}
			\subfigure[$\Sz{3}{2} \cap H$]{
				\includegraphics[width=0.25\linewidth]{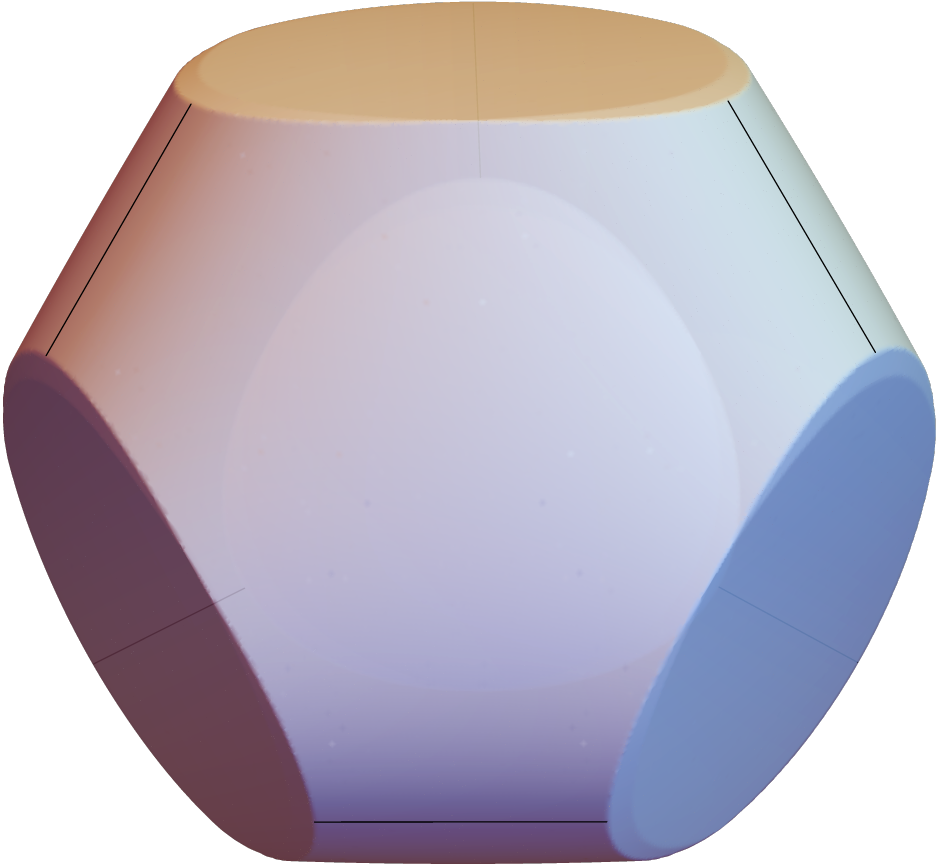}}
			\hspace{0.5em}
			\begin{minipage}[c]{0.02\linewidth}
				\centering \raisebox{15ex}{\Large$\subseteq$}
			\end{minipage}
			\hspace{0.5em}
			\subfigure[$\Sone{3}{2} \cap H$]{
				\includegraphics[width=0.25\linewidth]{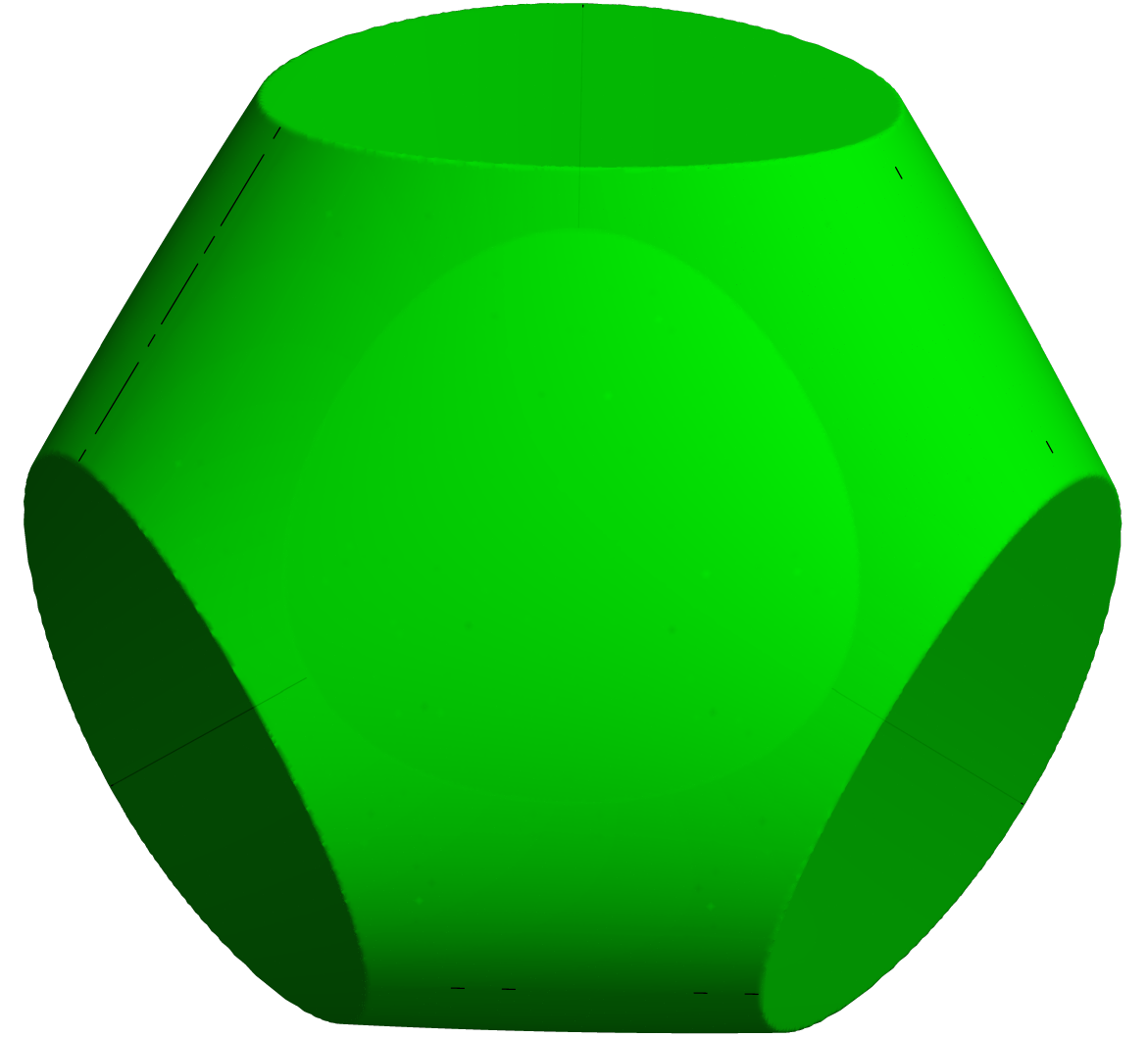}}
			
			\caption{Comparison of $\Szero{3}{2}$, $\Sone{3}{2}$ and $\Sz{n}{k}$}
			\label{fig:comparison_of_convex_sets}
		\end{figure}
	\end{ex}
	
	\begin{thm}\label{thm:Sone_notsubset_Szero}
		For $2 \leq k < n$,
		there exists rank-one matrices $x x^T \in \Sone{n}{k}$ with $\|x\|_0 > k$.
		In particular,
		\(\Sone{n}{k} \not\subset \Szero{n} {k}\)
		and
		\(\conv(Q_{n,k})\subsetneq \Sone{n}{k}\).
	\end{thm}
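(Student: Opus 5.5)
Since $X = xx^T$ is automatically PSD, the only condition to check is the $\ell_1$ inequality. For $X = xx^T$ we have $\|X\|_1 = \sum_{i,j}|x_i||x_j| = \|x\|_1^2$ and $\tr X = \|x\|_2^2$. Hence $xx^T \in \Sone{n}{k}$ if and only if $\|x\|_1^2 \le k\|x\|_2^2$.

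The task is therefore to exhibit a vector with more than $k$ nonzeros whose ratio $\|x\|_1^2/\|x\|_2^2$ is at most $k$. The idea is to use one dominant entry together with small entries. Take $x = (1, \epsilon, \dots, \epsilon, 0, \dots, 0)$ with exactly $k+1$ nonzero entries, which is possible because $k < n$. Then
\[
\|x\|_1^2 = (1+k\epsilon)^2, \qquad \|x\|_2^2 = 1 + k\epsilon^2 .
\]
The required inequality $(1+k\epsilon)^2 \le k(1+k\epsilon^2)$ rearranges to
\[
1 + 2k\epsilon + k^2\epsilon^2 \le k + k^2\epsilon^2,
\]
that is, $2k\epsilon \le k-1$. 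So any $0 < \epsilon \le \frac{k-1}{2k}$ works, and such an $\epsilon$ exists precisely because $k \ge 2$. This $x$ has $\|x\|_0 = k+1 > k$ and $xx^T \in \Sone{n}{k}$.

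For the consequences, \Cref{lem:cardinality} shows that $xx^T \notin \Szero{n}{k}$, since $\|x\|_0 > k$. Therefore $\Sone{n}{k} \not\subset \Szero{n}{k}$.

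Next, $\conv(Q_{n,k}) \subseteq \Szero{n}{k}$ by \Cref{lem:cardinality}, so $xx^T \notin \conv(Q_{n,k})$. On the other hand, $\conv(Q_{n,k}) \subseteq \Sone{n}{k}$ follows by chaining \Cref{lem:Sz_rank_one}, convexity of $\Sz{n}{k}$, and \Cref{lem:Q_subset_Sone}. Alternatively, one can check it directly: for $k$-sparse $x$, Cauchy–Schwarz gives $\|x\|_1^2 \le k\|x\|_2^2$, and $\Sone{n}{k}$ is convex. Since $xx^T$ lies in $\Sone{n}{k}$ but not in $\conv(Q_{n,k})$, the inclusion is strict.

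The only step that needs care is choosing $\epsilon$ so that the inequality holds. It is elementary, and the computation also shows why the hypothesis $k \ge 2$ is needed: when $k = 1$ the condition becomes $2\epsilon \le 0$, which no positive $\epsilon$ satisfies.
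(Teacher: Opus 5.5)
Your proof is correct and uses the same strategy as the paper. Both exhibit an explicit vector with more than $k$ nonzeros, use $\|xx^T\|_1=\|x\|_1^2$ and $\tr(xx^T)=\|x\|_2^2$ to check the $\ell_1$ inequality, and then apply \Cref{lem:cardinality}.

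The difference is the witness, and here yours is the more reliable one. The paper takes $x=(\epsilon,1,2,\dots,k,0,\dots,0)$ with $\epsilon=1/k^2$, and its simplified gap is miscomputed. The correct value is
\[
k\tr(xx^T)-\|xx^T\|_1=\tfrac{k^2(k^2-1)}{12}-\tfrac{k+1}{k}+\tfrac{k-1}{k^4},
\]
which equals $-7/16<0$ at $k=2$. Concretely, $x=(\tfrac14,1,2,0,\dots,0)$ gives $\|x\|_1^2=\tfrac{169}{16}>\tfrac{162}{16}=2\|x\|_2^2$. So the paper's witness only covers $k\ge 3$ unless $\epsilon$ is shrunk (at $k=2$ one needs $\epsilon\le 3-2\sqrt2$).

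Your witness $(1,\epsilon,\dots,\epsilon,0,\dots,0)$ reduces the check to the single linear condition $2k\epsilon\le k-1$. It works uniformly for every $k\ge 2$ and shows exactly where $k\ge2$ is needed.

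You also explicitly justify $\conv(Q_{n,k})\subseteq\Sone{n}{k}$, which the paper uses without comment. Of your two routes for this, the direct one (Cauchy--Schwarz plus convexity of $\Sone{n}{k}$) is preferable. The chain through \Cref{lem:Sz_rank_one} relies on that lemma's ``if'' direction, which the paper states but does not actually prove.
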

	\begin{proof}
		Let \(x = (\epsilon,1,2,\dots,k,0\dots,0)^T\in\RR^n\), with $\epsilon = 1/k^2$.
		By \Cref{lem:cardinality},
		$x x^T$ does not lie in \(xx^T\notin \Szero{n}{k}\) nor in \(\conv Q_{n,k}\).
		Let us show that \(xx^T\in \Sone{n}{k}\). Notice that
		\[
		\|xx^T\|_1 = \tfrac{(k+1)^2k^2}{4} + \epsilon k(k+1) + \epsilon^2
		\]
		and
		\[
		k\tr(xx^T) = \tfrac{k^2(k+1)(2k+1)}{6} + k\epsilon^2.
		\]
		Since \(\epsilon = 1/k^2\) and $k\geq 2$, we have
		\[
		k\tr(xx^T) - \|xx^T\|_1 = \tfrac{k^2(k^2-1)}{12} - \tfrac{1}{k}\Big(1 + \tfrac{1}{k^3}\Big)\geq 0
		\]
		and hence \(xx^T \in \Sone{n}{k}\).
	\end{proof}

    \Cref{lem:cardinality} and \Cref{lem:Sz_rank_one} show that $\Szero{n}{k}$ and $\Sz{n}{k}$ perfectly agree with $Q_{n,k}$ when restricted to rank-one matrices.
    However, as shown next, $\Sz{n}{k}$ is not robust against noise:
    it does not differentiate the rank one matrix $x_1 x_1^T$, where \(\|x_1\|_0=k\), 
    from the rank-two matrix \(x_1x_1^T + \epsilon  x_2x_2^T\).
    In contrast, $\Szero{n}{k}$ is robust against small perturbations.
    
	\begin{thm}\label{thm:Sz_perturbation}
		Let \(2k<n\), and let $x \in \RR^n$ with $\|x\|=1$, \(\|x\|_0=k\) and not all its non-zero coordinates identical.
        Then
		\begin{enumerate}
			\item \label{eq:compare_ourset}
            \(x x^T + \epsilon \, w w^T \notin \Szero{n}{k}\)
            for any $w \!\in\! \RR^n$, $\|w\|=1$, $\|w\|_0\!=\!n$ and any $\epsilon \!>\! 0$.
			\item\label{eq:compare_Sz}
            Suppose that \(w\) with $\|w\|=1$, \(\|w\|_0=n\), satisfies \(\sum_{i\in\supp(x)}(1 - |\tfrac{w_i}{x_i}|)^2 > n - k\), then we have
            \(x x^T + \epsilon \, w w^T \in \Sz{n}{k}\)
            $\|w\|_0 \!=\! n$ for small enough $\epsilon \!>\! 0$.
		\end{enumerate}
	\end{thm}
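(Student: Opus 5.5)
For \eqref{eq:compare_ourset}, I would exhibit a vector $v$ with $v^T(k\diag X - X)v<0$, where $X = xx^T+\epsilon ww^T$. Let $S=\supp(x)$, so $|S|=k$.

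The first step is to note that $M_0 := k\diag(xx^T)-xx^T$ is PSD by \Cref{lem:cardinality}. Its kernel contains the vector $y_S$ with entries $1/x_i$ on $S$ (the vector from the proof of \Cref{lem:cardinality}) and every $e_j$ with $j\notin S$. For $v$ in this kernel we get
\[
v^T(k\diag X - X)v=\epsilon\, v^T\bigl(k\diag(ww^T)-ww^T\bigr)v ,
\]
so the sign does not depend on $\epsilon>0$, and it suffices to make the right-hand form negative.

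Since $\|w\|_0=n$, all $w_j\neq 0$, and I take $v=y_S+\sum_{j\notin S} (c/w_j)\, e_j$. With $B=\sum_{i\in S}w_i^2/x_i^2$ and $a=\sum_{i\in S}w_i/x_i$, the form equals
\[
k\bigl(B+(n-k)c^2\bigr)-\bigl(a+(n-k)c\bigr)^2 .
\]
Its $c^2$-coefficient is $(n-k)(2k-n)<0$ because $2k<n$. Hence the form is negative for large $c$, which proves \eqref{eq:compare_ourset}. This part does not use the non-identical-coordinates hypothesis.

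For \eqref{eq:compare_Sz}, $X\succeq 0$ is clear, and $\tr X=1+\epsilon$. I would construct $z$ explicitly as a first-order perturbation of the certificate for $xx^T$, which is $z_i=1$ on $S$ and $z_j=0$ off $S$.

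On $S$, I set $z_i=1+\epsilon\delta_i$. Row $i$ of $X$ is $x_i x+\epsilon w_i w$, so $\|X_{i,:}\|^2=x_i^2+2\epsilon x_iw_i(x^Tw)+O(\epsilon^2)$, while $X_{ii}z_i=x_i^2+\epsilon(w_i^2+x_i^2\delta_i)+O(\epsilon^2)$. With $r_i=w_i/x_i$ and the bound $|x^Tw|\le 1$, the $\ell_2$ row constraint holds for small $\epsilon$ once $\delta_i> 2|r_i|-r_i^2$. I plan to take $\delta_i=2|r_i|-r_i^2+\eta$ for a tiny $\eta>0$.

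The $\ell_1$ row constraint on $S$ reads $x_i^2\|x\|_1^2\le kx_i^2$ at $\epsilon=0$. Because the nonzero $|x_i|$ are not all equal, Cauchy–Schwarz gives $\|x\|_1^2<k$ strictly. So this constraint has slack and survives small perturbations; this is exactly where the non-identical hypothesis enters. The upper bound $z_i\le 1+\epsilon$ needs $\delta_i<1$, which is automatic since $2|r|-r^2\le 1$, and $\eta$ can be kept inside the slack.

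Off $S$, row $j$ is $\epsilon w_j w$. The constraints then become the lower bounds $z_j\ge \epsilon$ (from $\ell_2$) and $z_j\ge \epsilon\|w\|_1^2/k$ (from $\ell_1$). The remaining budget from $\sum_i z_i=k(1+\epsilon)$ is
\[
\epsilon\Bigl(k-\sum_{i\in S}\delta_i\Bigr)=\epsilon\Bigl(\sum_{i\in S}(1-|r_i|)^2-k\eta\Bigr)>\epsilon(n-k)
\]
by hypothesis, for $\eta$ small. Spreading it equally gives each $z_j>\epsilon$, which satisfies the $\ell_2$ constraint with room to spare.

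The main obstacle I expect is the off-support $\ell_1$ constraint $z_j\ge\epsilon\|w\|_1^2/k$: the quantity $\|w\|_1^2$ can be as large as $n$, so $z_j>\epsilon$ alone may not suffice. I would first try to sharpen the support-side estimate by keeping $x^Tw$ instead of bounding it by $1$, and by exploiting that the slack $k-\|x\|_1^2$ lets the $\ell_1$ constraint tolerate smaller $\delta_i$, to free additional budget. If that fails, I would check whether the hypothesis should be read with $\max\{1,\|w\|_1^2/k\}(n-k)$ on the right-hand side. Finally, I would verify $z_j\le \tr X$ (trivial for small $\epsilon$) and handle the $O(\epsilon^2)$ terms by choosing $\epsilon$ small relative to $\eta$ and the $\ell_1$ slack.
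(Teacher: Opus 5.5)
Your part (1) is correct and is essentially the paper's argument. The paper takes $v_j=1/w_j$ for $j\notin S$ and $v_j=0$ on $S$ (your $c\to\infty$ limit), which gives $\epsilon(2k-n)(n-k)<0$ directly. Part (2), however, is left unproved, and the obstacle you flag cannot be removed: with the hypothesis as stated, part (2) is false. Write $r_i=w_i/x_i$ and $t=x^Tw$. For $j\notin S$ the row constraints are exactly $z_j\ge\epsilon$ and $z_j\ge\epsilon\|w\|_1^2/k$. For $i\in S$ the $\ell_2$ constraint alone forces $z_i\ge 1+\epsilon(2r_it-r_i^2)+O(\epsilon^2)$. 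Summing against $\sum_i z_i=k(1+\epsilon)$ shows that membership for all small $\epsilon>0$ requires
\[
\sum_{i\in S}\bigl(1-2r_it+r_i^2\bigr)\;\ge\;(n-k)\max\bigl\{1,\|w\|_1^2/k\bigr\}.
\]
Now take $n=5$, $k=2$, $x=(0.05,b,0,0,0)$ with $b=\sqrt{0.9975}$, and $w=(0.137,\,0.7b,\,s,s,s)$ with $3s^2=1-0.137^2-0.49b^2$ (so $s\approx 0.405$). Then $r_1=2.74$, $r_2=0.7$, and $\sum_{i\in S}(1-|r_i|)^2\approx 3.12>3=n-k$, so the hypothesis holds. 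But $t\approx 0.705$ gives $\sum_{i\in S}(1-2r_it+r_i^2)\approx 5.15$, while $(n-k)\|w\|_1^2/k\approx 6.31$. Hence $xx^T+\epsilon ww^T\notin\Sz{5}{2}$ for every sufficiently small $\epsilon>0$.

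The paper's proof follows your route and breaks at exactly your sticking point. For off-support rows it claims $\delta_i'\ge\epsilon$ suffices, whereas its own displayed $\ell_1$ inequality needs $\delta_i'\ge\epsilon\|w\|_1^2/k$. Your first remedy cannot help. Keeping $x^Tw$ already gives the exact first-order threshold used above. The slack $k-\|x\|_1^2$ in the on-support $\ell_1$ constraint does not let $z_i$ drop either, since $z_i$ must still meet the binding $\ell_2$ constraint. Your second remedy is the right repair. Under $\sum_{i\in S}(1-|r_i|)^2>(n-k)\max\{1,\|w\|_1^2/k\}$, your construction goes through, and the displayed criterion shows this repair is essentially sharp. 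One small fix is needed: if some $|r_i|=1$, your $\delta_i=2|r_i|-r_i^2+\eta=1+\eta$ violates $z_i\le\tr X$. Take instead $\delta_i=2|r_i||t|-r_i^2+\eta\le t^2+\eta<1$. This is legitimate since $|t|<1$ (because $w\neq\pm x$), and it frees at least as much budget.
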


	\begin{proof}
		See \Cref{append:thm_Sz_perturbation}.
	\end{proof}
	
	The next example and theorem indicate that $\Szero{n}{k}$ is not better than $\Sone{n}{k}$ and $\Sz{n}{k}$ either.
	
	\begin{ex}[$\Szero{4}{2}\not\subset \Sone{4}{2}$]
		The following matrix lies in $\Szero{4}{2}$ but not in~$\Sone{4}{2}$ and $\Sz{n}{k}$:
		\[
		\Bigg(\begin{smallmatrix}
			3 & 0 & 2 & -2\\
			0 & 3 & 2 & 2\\
			2 & 2 & 3 & 0\\
			-2 & 2 & 0  & 3
		\end{smallmatrix}\Bigg)
		\]
	\end{ex}
	
	\begin{thm}\label{thm:Szero_notsubset_Sone}
		When \( n = q + 1\) where \(q\) is a prime power such that \(q \equiv 1 (\bmod 4)\) and \(2\leq k < \sqrt{n-1}+1\), we have \(\Szero{n}{k} \not\subset \Sone{n}{k}\).
	\end{thm}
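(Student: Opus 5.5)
Since $q\equiv 1 \pmod 4$ is a prime power, the Paley graph construction gives a \emph{conference matrix}: a symmetric matrix $C\in\SS^{n}$, $n=q+1$, with zero diagonal, off-diagonal entries $\pm 1$, and $C^2=(n-1)I$. Its eigenvalues are therefore $\pm\sqrt{n-1}$. The plan is to test the candidate
\[
X = tI + C, \qquad t>0,
\]
and tune $t$ so that $X\in\Szero{n}{k}$ but $\|X\|_1>k\tr X$.

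For membership in $\Szero{n}{k}$ we have $\diag X=tI$, so we need two conditions. First, $X\succeq 0$ holds iff $t\ge \sqrt{n-1}$. Second, $k\diag X - X=(k-1)tI - C\succeq 0$ holds iff $(k-1)t\ge \sqrt{n-1}$. Since $k\ge 2$, the binding condition is $t\ge\sqrt{n-1}$, so we take $t=\sqrt{n-1}$. For the $\ell_1$ side, every off-diagonal entry has absolute value $1$, so
\[
\|X\|_1 = nt + n(n-1), \qquad k\tr X = knt .
\]
The condition $\|X\|_1>k\tr X$ is then $n(n-1)>(k-1)nt$, that is, $\sqrt{n-1}>k-1$. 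This is exactly the hypothesis $k<\sqrt{n-1}+1$. Hence $X\in\Szero{n}{k}\setminus\Sone{n}{k}$.

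The main obstacle is justifying that the conference matrix exists and has the stated properties. Concretely, for $q\equiv1 \pmod 4$, index rows by $\{\infty\}\cup\mathbb{F}_q$. Set $C_{\infty,a}=C_{a,\infty}=1$, $C_{a,a}=0$, and $C_{a,b}=\chi(a-b)$, where $\chi$ is the quadratic character. Symmetry uses $\chi(-1)=1$. The identity $C^2=(n-1)I$ follows from the standard character sums $\sum_{c}\chi(c)=0$ and $\sum_c \chi(c-a)\chi(c-b)=-1$ for $a\ne b$. I would cite this classical fact (Paley's construction) rather than reprove it, and then just verify the eigenvalue claim $C^2=(n-1)I\Rightarrow \mathrm{spec}(C)\subseteq\{\pm\sqrt{n-1}\}$, noting both signs occur since $\tr C=0$. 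The remaining computations are the routine ones above. If desired, the same matrix also shows $X\notin\Sz{n}{k}$ via \Cref{lem:Q_subset_Sone}.
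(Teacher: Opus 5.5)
Your proof is correct and takes the same approach as the paper. The paper also sets $X=\sqrt{n-1}\,I_n + C$, with $C$ a symmetric conference matrix, and compares $\|X\|_1 = n\sqrt{n-1}+n(n-1)$ with $k\tr(X) = kn\sqrt{n-1}$. Where the paper cites Haemers--Parsaei Majd for the existence and spectrum of $C$, you sketch Paley's construction. Your explicit check that $(k-1)\sqrt{n-1}\,I_n - C\succeq 0$ for $k\ge 2$ supplies the membership step the paper only asserts.
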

	\begin{proof}
		See \Cref{append:thm_Szero_notsubset_Sone}.
	\end{proof}

	\subsection{Extreme rays and dual cones}
	We proceed to investigate the extreme rays of different convex relaxations of $Q_{n,k}$.
	
	\begin{thm}\label{thm:extreme_ray_of_Q}
		Denoting $\extr K$ the set of extreme rays of~$K$, then
		\begin{align*}
			\extr(\conv Q_{n,k})
			&= Q_{n,k} \setminus 0,\\
			\{X \in \extr \Szero{n}{k} : \rank X = 1\}
			&= Q_{n,k} \setminus 0,\\
			\{X \in \extr \Sz{n}{k} : \rank X = 1\}
			&= Q_{n,k} \setminus 0,\\
			\{X \in \extr \Sone{n}{k} : \rank X = 1\}
			&\supsetneq Q_{n,k} \setminus 0, \text{ for } 2 \leq k < n.
		\end{align*}
	\end{thm}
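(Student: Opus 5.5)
The plan has four steps, one per line of the statement, and each uses the same tool: a rank-one matrix inside a subcone of the PSD cone is automatically extreme there.

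\emph{Key observation.} Every rank-one PSD matrix $xx^T$ spans an extreme ray of $\SS^n_+$. Indeed, if $xx^T = A + B$ with $A,B\succeq 0$, then $\operatorname{range}(A),\operatorname{range}(B)\subseteq \operatorname{span}(x)$, so both $A$ and $B$ are nonnegative multiples of $xx^T$. Now let $K$ be any convex cone with $K\subseteq\SS^n_+$ that contains $xx^T$. A decomposition of $xx^T$ inside $K$ is in particular a decomposition inside $\SS^n_+$. Hence $xx^T$ spans an extreme ray of $K$.

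\emph{Applying it to the three cones.} The cones $\conv Q_{n,k}$, $\Szero{n}{k}$, $\Sz{n}{k}$ and $\Sone{n}{k}$ all lie in $\SS^n_+$, so for each of them the rank-one extreme rays are exactly the nonzero rank-one matrices that belong to the cone.
\begin{itemize}
\item For $\Szero{n}{k}$, \Cref{lem:cardinality} identifies these as $Q_{n,k}\setminus 0$.
\item For $\Sz{n}{k}$, \Cref{lem:Sz_rank_one} identifies them as $Q_{n,k}\setminus 0$.
\item For $\Sone{n}{k}$, the rank-one members include $Q_{n,k}\setminus 0$, because \Cref{lem:cardinality} and \Cref{lem:Sz_rank_one} give $Q_{n,k}\subseteq\Sz{n}{k}$ and \Cref{lem:Q_subset_Sone} gives $\Sz{n}{k}\subseteq\Sone{n}{k}$. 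The inclusion is strict: \Cref{thm:Sone_notsubset_Szero} supplies a rank-one $xx^T\in\Sone{n}{k}$ with $\|x\|_0>k$, and the key observation makes it an extreme ray.
\end{itemize}

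\emph{The hull $\conv Q_{n,k}$.} This is the only line that needs more than the key observation. Elements of $Q_{n,k}\setminus 0$ are rank-one and lie in $\conv Q_{n,k}\subseteq\SS^n_+$, so they are extreme. For the converse, take $X$ spanning an extreme ray of $\conv Q_{n,k}$. Since $Q_{n,k}$ is a cone (closed under nonnegative scaling), $\conv Q_{n,k}$ consists of finite sums $X=\sum_j x_jx_j^T$ with every $x_jx_j^T\in Q_{n,k}$. Extremality forces each summand to be a nonnegative multiple of $X$. Picking a nonzero summand shows $X$ is a positive multiple of some $x_jx_j^T$, hence $X\in Q_{n,k}\setminus 0$. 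The one point to check carefully is closedness, i.e. that $\conv Q_{n,k}$ really equals the set of finite sums. This holds because $Q_{n,k}$ is a closed cone with compact base $\{xx^T:\|x\|=1,\|x\|_0\le k\}$ not containing $0$, so its convex hull is already closed.
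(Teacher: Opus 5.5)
Your proof is correct and follows essentially the paper's route: a rank-one matrix is extreme in $\SS^n_+$ and hence in any subcone containing it, combined with \Cref{lem:cardinality}, \Cref{lem:Sz_rank_one} and \Cref{thm:Sone_notsubset_Szero}, while you also spell out the $\conv Q_{n,k}$ step that the paper just calls clear (closedness is not actually needed there, since $\conv Q_{n,k}$ already consists of finite sums of elements of the cone $Q_{n,k}$). One caveat inherited from the cited theorem: its explicit vector fails for $k=2$, because with $\epsilon=1/4$ one gets $\|xx^T\|_1=10.5625>10.125=2\tr(xx^T)$; in that case take, e.g., $\epsilon=1/8$, and then the cited statement, and hence your argument, still hold.
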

	\begin{proof}
		For any convex relaxation $K \supseteq Q_{n,k}$ we have that
		$\extr K \supseteq Q_{n,k} \setminus 0$,
		since $Q_{n,k}\setminus 0 \subseteq \extr \SS^n_+ = \{ x x^T : x \neq 0 \}$ .
		It is clear that $\conv Q_{n,k}$ does not have more extreme rays.
		$\Szero{n}{k}$ and $\Sz{n}{k}$ may have more extreme rays,
		but only of higher rank due to \Cref{lem:cardinality} and \Cref{lem:Sz_rank_one}.
		$\Sone{n}{k}$ does have other rank-one extreme rays due to \Cref{thm:Sone_notsubset_Szero}.
	\end{proof}
	
	The following theorem shows that \Szero{n}{k} may have extreme rays outside of \(Q_{n,2}\), of rank two. 
	
	\begin{thm}\label{thm:extreme_ray_special}
		Let \(n\geq4\) and \(2\leq k< n\).
		Let \(X = u_1u_1^T + u_2u_2^T\) where \(\|u_1\|=\|u_2\|=1\), \(\langle u_1,u_2\rangle=0\), and \(u_1,u_2\) satisfy
		\begin{equation}\label{eq:extreme_ray_special}
            \begin{gathered}
				\sum_{i=1}^n(u_1)_i(u_2)_i\big((u_1)^2_i+(u_2)^2_i\big) = 0,\quad
				\sum_{i=1}^n(u_1)_i^4 = \sum_{i=1}^n(u_2)_i^4 < \frac{1}{k},\quad \\
				\sum_{i=1}^n\big((u_1)_i^2 + (u_2)_i^2\big)^2 = \frac{2}{k},\quad (u_1)_i^2 + (u_2)_i^2 = (u_1)_j^2 + (u_2)_j^2,\forall i,j\in [n]
            \end{gathered}
		\end{equation}
		If \(X\notin \conv(Q_{n,k})\), then \(X\) is an extreme ray of \Szero{n}{k}.
	\end{thm}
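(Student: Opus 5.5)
Set $U=[u_1\;u_2]\in\RR^{n\times 2}$, so $X=UU^T$ with $U^TU=I_2$. By the last condition in \eqref{eq:extreme_ray_special}, $X_{ii}=(u_1)_i^2+(u_2)_i^2=c$ is constant in $i$. Since $\tr X=2$, we get $c=2/n$. The condition $\sum_i X_{ii}^2=2/k$ then reads $n c^2=4/n=2/k$, so $n=2k$ and $\diag X=\tfrac1k I$. Hence $k\diag X - X = I-UU^T$, which is the orthogonal projector onto $\operatorname{span}(u_1,u_2)^\perp$, and $X\in\Szero{n}{k}$.

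The plan is the standard face argument. Suppose $X=Y+Z$ with $Y,Z\in\Szero{n}{k}$. We must show $Y$ and $Z$ are multiples of $X$.

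\emph{Step 1: range constraint.} From $X\succeq Y\succeq 0$, the range of $Y$ lies in $\operatorname{span}(u_1,u_2)$, so $Y=UMU^T$ with $0\preceq M\preceq I_2$.

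\emph{Step 2: the other inequality.} From $k\diag X-X\succeq k\diag Y-Y\succeq 0$ and $k\diag X - X=I-UU^T$, the kernel of $k\diag Y-Y$ contains $\operatorname{span}(u_1,u_2)$. Applying this to $u_1$ and $u_2$ gives $(k\diag Y - Y)U=0$, i.e. $k\diag(Y)U=UM$, using $U^TU=I$. Write $D=k\diag(UMU^T)$, a diagonal matrix with entries $d_i=k\,U_{i,:}MU_{i,:}^T$. The condition $DU=UM$ says each row $U_{i,:}$ satisfies $d_iU_{i,:}=U_{i,:}M$, so every nonzero row of $U$ is a left eigenvector of $M$. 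Rows are nonzero because $X_{ii}=1/k>0$.

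\emph{Step 3: case split on $M$.} If $M$ is a multiple of $I$, then $Y\propto X$ and we are done. Otherwise $M$ has two distinct eigenvalues with orthonormal eigenvectors $v_1,v_2$. Every row of $U$ is parallel to $v_1$ or to $v_2$. After rotating, i.e. replacing $U$ by $UR$ with $R$ orthogonal (which keeps $X$ unchanged), the columns $\tilde u_1,\tilde u_2$ of $UR$ have disjoint supports $S_1,S_2$ partitioning $[n]$. Then $X=\tilde u_1\tilde u_1^T+\tilde u_2\tilde u_2^T$. Since $X_{ii}=1/k$ and $\|\tilde u_j\|=1$, we get $|S_j|=k$, so $\|\tilde u_j\|_0= k$. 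Thus $X\in\conv(Q_{n,k})$, contradicting the hypothesis.

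The same argument applied to $Z$ completes the proof that $X$ spans an extreme ray.

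The main obstacle is the case analysis in Step 3, specifically checking that the eigenvalue equation is consistent. It is: with $M$ diagonal in the basis $v_1,v_2$ and eigenvalues $\mu_j$, we get $d_i=k\mu_j/k=\mu_j$ on rows in $S_j$, as required.

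The remaining conditions in \eqref{eq:extreme_ray_special} are not needed in this argument. These are the mixed-moment identity and $\sum_i (u_1)_i^4=\sum_i(u_2)_i^4<1/k$. Presumably they serve to rule out the disjoint-support decomposition for the given $u_1,u_2$, or to make the set of such $X$ nonempty. In the proof I would only use $\diag X=\tfrac1kI$ and the hypothesis $X\notin\conv(Q_{n,k})$. I would double-check that no degenerate case, such as $M$ singular, escapes the argument. I believe none does, since eigenvalue $0$ is still covered by Step 3.
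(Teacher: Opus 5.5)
Your proof is correct, and it takes a different and more complete route than the paper's.

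\emph{Membership.} You use the constant diagonal and $\sum_i X_{ii}^2=2/k$ to get $n=2k$ and $k\diag X-X=I-UU^T$. This gives $X\in\Szero{n}{k}$ at once and identifies the kernel as $\operatorname{span}(u_1,u_2)$. The paper instead expands $x^T(k\diag X-X)x$ on $\operatorname{span}(u_1,u_2)$ using the quartic and mixed-moment identities, and then checks that the cross terms vanish.

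\emph{Extremality.} The paper writes $X_1=(1-q_1)u_1u_1^T+q_2u_2u_2^T$ and tests $k\diag X_1-X_1$ against $u_1$ and $u_2$. There the condition $\sum_i(u_1)_i^4=\sum_i(u_2)_i^4<1/k$ forces $q_1+q_2=1$. This tacitly assumes that the $2\times 2$ matrix $M$ is diagonal in the basis $u_1,u_2$, and it never uses $X\notin\conv(Q_{n,k})$. Your kernel identity $(k\diag Y-Y)U=0$ handles every $0\preceq M\preceq I_2$. The hypothesis $X\notin\conv(Q_{n,k})$ then enters exactly where it must, to rule out the rows of $U$ lying on two orthogonal lines.

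\emph{The hypothesis is necessary.} For $n=4$ and $k=2$, take $u_1=\tfrac12(1,1,1,1)$ and $u_2=\tfrac12(1,1,-1,-1)$. These satisfy every other hypothesis of the theorem. Yet $X$ is block diagonal with blocks $\tfrac12\bigl(\begin{smallmatrix}1&1\\1&1\end{smallmatrix}\bigr)$, lies in $\conv(Q_{4,2})$, and is not extreme. So an argument that never invokes $X\notin\conv(Q_{n,k})$, as the paper's does, cannot be complete.

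\emph{The remaining conditions are redundant.} You are right about this. The mixed-moment sum equals $\tfrac1k\langle u_1,u_2\rangle=0$ once the diagonal is constant. The condition $\sum_i(u_1)_i^4=\sum_i(u_2)_i^4<1/k$ only says that these particular $u_1,u_2$ do not have disjoint supports, which is strictly weaker than $X\notin\conv(Q_{n,k})$.
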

    \begin{proof}
        See \Cref{append:thm_extreme_ray_special}.
    \end{proof}
	
	Then we show an example of rank-two extreme rays with \(n>k=2\).
	
	\begin{ex}
		The following matrix is an extreme ray in \Szero{4}{2},
		\[
		X:= \left(\begin{smallmatrix}
			\sqrt{5} & 1 & 2& 0\\
			1 & \sqrt{5} & 0 & 2\\
			2 & 0 & \sqrt{5} & -1\\
			0 & 2 & -1 & \sqrt{5}
		\end{smallmatrix}\right)
        = 2\sqrt{5}(u_1u_1^T + u_2u_2^T)
		\]
		where 
        \small{
		\begin{equation}
				u_1 = \tfrac{1}{2\sqrt{5-\sqrt{5}}}(2, 2, \sqrt{5}-1, \sqrt{5}-1)^T,
				u_2 = \tfrac{1}{2\sqrt{5+\sqrt{5}}}(2, -2, \sqrt{5}+1, -(\sqrt{5}+1))^T
		\end{equation}
        }
		In \Cref{lem:sdd_factorization} we check that \(X\notin \conv(Q_{4,2})\).
		It can be checked that \eqref{eq:extreme_ray_special} holds,
		so \(X\) is an extreme ray of \Szero{4}{2}.
	\end{ex}
	
	We proceed to investigate the dual cones. 
	The following is shown in \Cref{append_dual_cones}:
	\begin{align}\label{eq:dualcone}
		(\Szero{n}{k})^* &= \{Y + k\diag(Z) - Z: Y \succeq 0, Z \succeq 0\}\\
		(\Sone{n}{k})^* &= \{Y + \alpha kI_n - Z: \alpha\geq 0, Y \succeq0, Z\in\SS^n, \|Z\|_\infty \leq \alpha\}\\
		\conv(Q_{n,k})^* &= \{Y: Y_{S}\succeq 0, |S|\leq k, Y\in\SS^n\}
	\end{align}
	Some properties on the distance between  $\conv(Q_{n,k})^*$ and PSD cone are analyzed in \cite{blekherman2022sparse,blekherman2022hyperbolic}.
	We next find some of the extreme rays of these dual cones.
	\begin{thm}\label{thm:extreme_ray_of_ourset_dual}
		Let \(X = xx^T\) where \(\|x\|_0 \leq k\).
		Then \(X\) is an extreme ray in \( \conv(Q_{n,k})^*\),
		\((\Szero{n}{k})^*\) and \((\Sone{n}{k})^*\)
	\end{thm}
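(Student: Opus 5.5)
The plan is to reduce everything to the largest of the three dual cones, $\conv(Q_{n,k})^*$. We assume $x\neq 0$, since otherwise $X=0$ spans no ray, and $k\geq 2$; for $k=1$ the claim is false for $\conv(Q_{n,1})^*$ when $n\ge 2$, as the last paragraph explains.

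\emph{Step 1: sandwich the cones.} By \Cref{lem:cardinality} and \Cref{lem:Q_subset_Sone} together with \Cref{lem:Sz_rank_one}, we have $\conv(Q_{n,k}) \subseteq \Szero{n}{k} \subseteq \SS^n_+$ and $\conv(Q_{n,k}) \subseteq \Sone{n}{k} \subseteq \SS^n_+$. Dualizing reverses inclusions, so for $K\in\{\Szero{n}{k},\Sone{n}{k}\}$,
\[
\SS^n_+ \subseteq K^* \subseteq \conv(Q_{n,k})^*.
\]
This is also visible directly from \eqref{eq:dualcone}. Consequently $xx^T\in K^*$ for each of the three cones.

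Extremality passes to subcones that contain the element. Suppose $xx^T=A+B$ with $A,B\in K^*$. Then $A,B\in\conv(Q_{n,k})^*$. So once $xx^T$ is shown to be extreme in $\conv(Q_{n,k})^*$, both $A$ and $B$ are nonnegative multiples of $xx^T$. Hence it suffices to treat $\conv(Q_{n,k})^*=\{Y\in\SS^n: Y_S\succeq 0 \text{ for all } |S|\le k\}$.

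\emph{Step 2: extremality in $\conv(Q_{n,k})^*$.} Let $T=\supp(x)$, so $|T|\le k$, and suppose $xx^T=A+B$ with $A,B\in\conv(Q_{n,k})^*$.

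First we kill the entries of $A$ and $B$ outside $T\times T$.
\begin{itemize}
\item For $i\notin T$, the $1\times1$ principal minors give $A_{ii},B_{ii}\ge 0$. Since $A_{ii}+B_{ii}=x_i^2=0$, we get $A_{ii}=B_{ii}=0$.
\item Because $k\ge 2$, every $2\times 2$ principal submatrix of $A$ indexed by $\{i,j\}$ is PSD. If its $(i,i)$ entry vanishes, PSD forces $A_{ij}=0$. So every row and column of $A$ indexed outside $T$ is zero, and likewise for $B$.
\end{itemize}

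Next, enlarge $T$ to some $S\supseteq T$ with $|S|=k$ (possible since $k<n$, or take $S=[n]$). Then $A_S\succeq0$ and $B_S\succeq 0$, and hence $A_T,B_T\succeq 0$. Now $x_Tx_T^T=A_T+B_T$ is a decomposition of a rank-one PSD matrix into PSD summands. Rank-one matrices span extreme rays of $\SS^{|T|}_+$, so $A_T=\lambda x_Tx_T^T$ and $B_T=(1-\lambda)x_Tx_T^T$ for some $\lambda\in[0,1]$. Combined with the vanishing outside $T\times T$, this gives $A=\lambda xx^T$ and $B=(1-\lambda)xx^T$, which proves extremality.

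\emph{Main obstacle.} The only delicate point is controlling the off-support entries of $A$ and $B$. The dual cone $\conv(Q_{n,k})^*$ only constrains principal submatrices of size at most $k$, so we need the $2\times2$ minors, and hence $k\ge2$. For $k=1$ the cone $\conv(Q_{n,1})^*$ consists of all symmetric matrices with nonnegative diagonal. There $e_ie_i^T=(\tfrac12 e_ie_i^T+tE)+(\tfrac12 e_ie_i^T-tE)$ with $E=e_ie_j^T+e_je_i^T$ and $j\ne i$ is a nontrivial splitting. So the argument must, and will, use $k\ge 2$, consistent with the range $2\le k<n$ used throughout this section.
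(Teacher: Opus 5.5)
Your proof is correct and follows essentially the paper's route. Like the paper, you reduce to $\conv(Q_{n,k})^*$ via the containment of the other two dual cones, use principal submatrices to force every entry of $A,B$ outside $\supp(x)\times\supp(x)$ to vanish, and finish with extremality of rank-one matrices in the PSD cone. Your $2\times 2$ minors also make explicit a step the paper glosses over, since its submatrices $[k-1]\cup\{j\}$ only control entries $(i,j)$ with $i\in[k-1]$; and your caveat that $k\ge 2$ is needed is right (for $k=1$ all three cones reduce to the nonnegative diagonal matrices, so the statement fails for each when $n\ge 2$).
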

	
	\begin{proof}
		See \Cref{append:thm_extreme_ray_dual} for the case of \( \conv(Q_{n,k})^*\).
		The statement for the other cones follow from the containment
		\[
		(\Szero{n}{k})^* \cup  (\Sone{n}{k})^* \subseteq \conv(Q_{n,k})^*.\qedhere
		\]
	\end{proof}

	\subsection{Relaxations of sparse QCQPs}
	
	Problem \eqref{sparse_qcqp} can be equivalently written as
	\begin{equation}
		\min_{X\in\SS^n} \quad C \bullet X
		\quad \st\quad
		A_i \bullet X  = b_i,\; i \in [m], \quad X \in Q_{n,k} 
	\end{equation}
	Hence, for any convex cone $K \supseteq Q_{n,k}$, the following is a convex relaxation of problem~\eqref{sparse_qcqp}:
	\begin{equation}\label{eq:sdpK}\tag{\small{Q${}_K$}}
		\begin{aligned}
			\min_{X \in \mathbb{S}^n}\quad C \bullet X
			\quad\st\quad
			A_i \bullet X = b_i,\,  i \in [m],\quad
			X\in K
		\end{aligned}
	\end{equation}
	The SDP in \eqref{sparse_sdp} corresponds to $K = \Szero{n}{k}$.
	The dual problem is
	\begin{equation}\label{eq:sdpKdual}\tag{\small{D${}_K$}}
		\begin{aligned}
			\max_{\lambda\in \RR^m}\quad \lambda \cdot b
			\quad\st\quad
			C -  \sum_{i=1}^m \lambda_i A_i  \in K^*
		\end{aligned}
	\end{equation}
	where $K^*$ is the dual cone.
	In particular, the dual problem of \eqref{sparse_sdp} is
	\begin{equation}\label{sparse_sdp_dual}\tag{\small{D}}
		\begin{aligned}
			\max_{\lambda\in \RR^m, Z\in \SS^n}\quad & \lambda \cdot b  \\
			\st\quad &  Q:=C -  \sum_{i=1}^m \lambda_i A_i - (k \diag Z - Z) \succeq 0\\
			& Z \succeq 0
		\end{aligned}
	\end{equation}
	The optimal values of
	\eqref{eq:sdpKdual},
	\eqref{eq:sdpK},
	and
	\eqref{sparse_qcqp} always satisfy
	\[
	\val(D_K) \leq \val(Q_K) \leq \val(P).
	\]
	The relaxation \eqref{eq:sdpK} is \emph{exact} if its optimal solution lies in $Q_{n,k}$.
	
	The following lemma gives a simple optimality certificate for both $\Szero{n}{k}$ and $\Sz{n}{k}$:
	if the minimizer of the relaxation is rank one, then the relaxation is exact.
	
	\begin{lem}
		Let $K$ be a convex cone such that $Q_{n,k} \subseteq K\subseteq \Szero{n}{k}$.
		If $x x^T$ is optimal for \eqref{eq:sdpK} then the relaxation is exact and $x$ is optimal for \eqref{sparse_qcqp}.
	\end{lem}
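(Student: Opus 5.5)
The plan is a sandwich argument. We combine the inclusion $K \subseteq \Szero{n}{k}$ with the rank-one characterization in \Cref{lem:cardinality}, and then compare optimal values.

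\emph{Step 1: sparsity of $x$.} Since $xx^T$ is optimal for \eqref{eq:sdpK}, it is in particular feasible, so $xx^T \in K$. Because $K \subseteq \Szero{n}{k}$, we get $xx^T \in \Szero{n}{k}$. \Cref{lem:cardinality} then gives $\|x\|_0 \le k$, so $xx^T \in Q_{n,k}$. This is exactly the definition of the relaxation being exact.

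\emph{Step 2: feasibility of $x$ for \eqref{sparse_qcqp}.} Feasibility of $xx^T$ in \eqref{eq:sdpK} also gives $A_i \bullet xx^T = b_i$, that is, $x^TA_ix = b_i$ for all $i\in[m]$. Together with $\|x\|_0\le k$, this makes $x$ feasible for \eqref{sparse_qcqp}.

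\emph{Step 3: optimality.} Every feasible $y$ of \eqref{sparse_qcqp} yields $yy^T \in Q_{n,k} \subseteq K$ with $A_i\bullet yy^T = b_i$. Hence $yy^T$ is feasible for \eqref{eq:sdpK}, so by optimality of $xx^T$,
\[
x^TCx = C\bullet xx^T \le C\bullet yy^T = y^TCy .
\]
Thus $x$ attains $\val(P)$, and $\val(Q_K) = \val(P)$.

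There is no real obstacle. The only substantive input is the ``if $xx^T\in\Szero{n}{k}$ then $\|x\|_0\le k$'' direction of \Cref{lem:cardinality}. This is precisely why the hypothesis requires $K\subseteq\Szero{n}{k}$ rather than merely $K \supseteq Q_{n,k}$: for $K=\Sone{n}{k}$, \Cref{thm:Sone_notsubset_Szero} shows a rank-one optimum need not be sparse. For $K = \Szero{n}{k}\cap\Sz{n}{k}$, the relevant relaxation for \eqref{sparse_sdp_plus}, the hypothesis clearly holds. For $\Sz{n}{k}$ alone one would instead invoke \Cref{lem:Sz_rank_one} in Step 1, and the rest is identical.
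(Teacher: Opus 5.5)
Your proof is correct and follows essentially the same argument as the paper. The inclusion $K\subseteq\Szero{n}{k}$ together with \Cref{lem:cardinality} makes $x$ sparse and hence feasible for \eqref{sparse_qcqp}, and optimality comes from $\val(Q_K)\le\val(P)$, which you unfold explicitly using $Q_{n,k}\subseteq K$.
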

	
	\begin{proof}
		Since $x x^T \in \Szero{n}{k}$, then $\|x\|_0 \leq k$ by Lemma~\ref{lem:cardinality}.
		Hence, $x$ is feasible for \eqref{sparse_qcqp},
		so it leads to an upper bound on the optimal value.
		But it is also a lower bound, since $x x^T$ is a solution of the relaxation.
		It follows that $x$ is optimal for \eqref{sparse_qcqp}.
	\end{proof}
	
	\section{Local stability analysis}\label{section:stability}
	
	In this section we study how the SDP relaxation \eqref{sparse_sdp} behaves when the data defining the problem is perturbed.
	We focus our attention to perturbations in the objective.
	This is motivated by applications such as sparse PCA and sparse ridge regression, where the problem data only appears in the the objective.
	The stability of general QCQPs was first studied in \cite{cifuentes2022local};
	we consider here the extension to the sparse setting.
	Our stability results will be applied to sparse PCA and sparse ridge regression in later sections.
	
	The general setup we study is as follows.
	Let $\bar C, \{A_i,b_i\}_{i \in [m]}$ be fixed, such that the SDP relaxation \eqref{sparse_sdp} is exact with those specific parameters.
	Consider perturbing the objective matrix $C = \bar C + \Delta C$, where the perturbation (or noise) $\Delta C$ is small.
	The constraint parameters $A_i, b_i$ remain fixed.
	Our goal is to establish conditions under which the relaxation \eqref{sparse_sdp} remains exact for the perturbed objective $C$,
	and to understand how much noise the relaxation tolerates while still being exact.
	
	We say that \eqref{sparse_sdp} is \textit{locally stable} near $\bar C$ if there exists a neighborhood of \(\bar C\), denoted as \(\delta(\bar C)\), such that
	\eqref{sparse_sdp} is exact (and hence \(\val(Q) = \val(P)\))
	for any objective $C \in \delta(\bar C)$.
	Let $\bar x$ be an optimal solution of \eqref{sparse_qcqp},
	and $\bar\lambda, \bar Z$ be an optimal solution of \eqref{sparse_sdp_dual}, using $\bar C$ for the objective.
	These solutions will certainly change when we consider other objectives $C$.
	Nonetheless, our local stability conditions only depend on properties of~$\bar x, \bar\lambda, \bar Z$.
	
	\subsection{Conditions guaranteeing local stability}
	
	Our stability analysis relies on a well-known constraint qualification, which we proceed to introduce.
	Given a feasible solution $\bar x$ of \eqref{sparse_qcqp}, let
	\begin{gather*}
		\vX_{\supp (\bar x)} = \{x\in\RR^n: g(x) = 0, \quad x_i = 0\;  \forall i \notin \supp(\bar x)\}
		\\
		\text{ where }
		g(x) := (x^T A_i x - b_i : i \in [m]) \in \RR^m
	\end{gather*}
	The \textit{Abadie constraint qualification} (ACQ) holds at $\bar x$ for $\vX_{\supp (\bar x)}$ if the set $\vX_{\supp (\bar x)}$ is a smooth manifold near $\bar x$ and $\rank\big(\nabla g (\bar x)\big) = \codim_{\bar x}(\vX_{\supp (\bar x)})$. Given a feasible solution \(\bar x\in\RR^n\), \(\lambda\in \RR^m\) is a \textit{Lagrange multiplier} at \(\bar x\) with respect to \(\vX_{\supp (\bar x)}\) if
	\[
	(C \bar x)_j - \sum_{i=1}^m\lambda_i (A_i\bar x)_j = 0,\; \forall j\in \supp(\bar x)
	\]
	
	We are ready to state our main theorem of this section.
	
	\begin{thm}\label{thm:sparse_local_stability}
		Let \(\bar C\) and \(\{A_i, b_i\}_{i\in [m]}\) be such that \(\val(D) = \val(Q) = \val(P)\).
		Let $\bar x$ be optimal for \eqref{sparse_qcqp}
		and $\bar\lambda,\bar Z$ be optimal for \eqref{sparse_sdp_dual}.
		Suppose that:
		\begin{enumerate}[label=(\alph*)]
			\item $\bar x$ has exactly $k$ nonzero entries.
			\item $\ACQ$ holds at $\bar x$ for $\vX_{\supp (\bar x)}$.
			\item $\bar Z = \vzero$. 
			\item $\bar Q:= \bar C - \cA(\bar\lambda)$ has corank one, where \(\cA(\lambda):= \sum_{i=1}^m \lambda_i A_i\).
		\end{enumerate}
		Then \eqref{sparse_sdp} is locally stable near $\bar C$.
	\end{thm}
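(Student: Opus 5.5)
The plan is to construct, for every $C$ near $\bar C$, a primal--dual pair certifying exactness: a feasible rank-one $xx^T$ for \eqref{sparse_sdp} together with a dual feasible $(\lambda,Z)$ for \eqref{sparse_sdp_dual} satisfying complementary slackness. Zero duality gap then forces $xx^T$ to be optimal. For uniqueness of the rank-one optimum (exactness), we use that $Q$ has corank one: any optimal $X$ satisfies $Q\bullet X=0$, so $X$ is supported on $\ker Q$ and is rank one.

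The key is to work on the fixed support $S=\supp(\bar x)$, with $|S|=k$ by (a). For $C$ near $\bar C$, restrict the problem to $\vX_{S}$ and take $x(C)$ to be a local minimizer of $x^TCx$ on $\vX_S$ near $\bar x$. By ACQ (b), $\vX_S$ is a smooth manifold near $\bar x$ and $\nabla g$ has the right rank. Hence Lagrange multipliers $\lambda(C)$ exist, and we can choose them continuously. Concretely, we apply the implicit function theorem to the KKT system on the manifold, or follow the argument of \cite{cifuentes2022local}: $x(C)\to\bar x$ and $\lambda(C)\to\bar\lambda$ as $C\to\bar C$. This gives $(Q(C)x(C))_S=0$ with $Q(C)=C-\cA(\lambda(C))$.

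The obstacle is that off the support, $(Q(C)x(C))_{S^c}$ need not vanish. This is exactly where the sparsity cone must help, through a small nonzero $Z$. We seek $Z\succeq0$ such that
\[
\big(Q(C)-(k\diag Z-Z)\big)x(C)=0
\qquad\text{and}\qquad
Q(C)-(k\diag Z-Z)\succeq 0 .
\]
Note that $(k\diag Z - Z)x$ restricted to $S^c$ equals $-(Zx)_{S^c}$ (using $x_{S^c}=0$), while on $S$ it equals $k\diag(Z)_S x_S - (Zx)_S$.

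We try $Z = vv^T$ with $v=(x_S/\|x_S\|^2 \cdot t\, ,\, w)$ chosen so that, on $S$, $k\diag(Z)x-Zx$ vanishes. The parallel in Lemma~\ref{lem:cardinality} is equality in Cauchy--Schwarz when $|S|=k$: choosing $v_S = x_S^{\circ -1}$ scaled gives $(k\diag(vv^T)-vv^T)x=0$ on $S$, exactly because $\|x\|_0=k$. Hypothesis (a) is used precisely here. The off-support block then becomes $-w\,(v_S^Tx_S)$, which we set equal to $(Q(C)x)_{S^c}$, so $w$ is small.

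Possibly a sum of such rank-one terms will be needed, one per off-support coordinate. The resulting $Z(C)$ tends to $0=\bar Z$, consistent with (c).

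Finally, we verify positive semidefiniteness by a perturbation argument. $\bar Q$ is PSD with kernel spanned by $\bar x$ by (d), so it is positive definite on $\bar x^\perp$. The matrix $M(C)=Q(C)-(k\diag Z-Z)$ depends continuously on $C$ and satisfies $M(C)x(C)=0$, so its smallest nonzero eigenvalue stays bounded away from zero near $\bar C$. Hence $M(C)\succeq0$ with corank one, and complementary slackness $M\bullet xx^T=0$ holds. Primal feasibility of $x(C)x(C)^T$ follows from Lemma~\ref{lem:cardinality}.

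The main difficulty I expect is constructing $Z$ continuously with $Z\succeq 0$ while cancelling the off-support gradient. If the rank-one ansatz fails, I would instead solve a linear system for $Z$ in the tangent directions and invoke the implicit function theorem jointly in $(x,\lambda,Z)$.
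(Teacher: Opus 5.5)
Your plan follows the paper's proof. Once you take $v_S=t\,x_S^{\circ-1}$ and $w=-p_{S^c}/(tk)$ with $p:=Q(C)x(C)$, your $Z=vv^T$ is exactly the paper's $Z=\zeta\zeta^T$, $\zeta=\alpha x^{-1}-(k\alpha)^{-1}p$, and $(Q(C)-\cD(Z))x(C)=0$. (The form $t\,x_S/\|x_S\|^2$ you first wrote cancels on $S$ only when all $|x_i|$, $i\in S$, are equal.) No sum of rank-one terms and no fallback linear system is needed.

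Three of your choices improve on the paper's write-up:
\begin{itemize}
\item Your closing step, eigenvalue continuity (Weyl) plus $M(C)x(C)=0$, is a cleaner substitute for the paper's projection computation.
\item Your remark that a corank-one slack forces every optimal $X$ to be a multiple of $xx^T$ states what the paper leaves implicit.
\item Taking any KKT point of the support-restricted problem near $\bar x$, rather than the paper's global minimizer, is fine, because the dual certificate itself proves global optimality.
\end{itemize}
If you get $(x(C),\lambda(C))$ from the implicit function theorem, you need $\bar Q_S$ positive definite on $\ker\bar J$, with $\bar J=\nabla g_s(\bar x_S)$. By (d) this holds as long as $\bar J\bar x_S=2b\neq 0$. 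Under ACQ rather than LICQ, first discard locally redundant constraints.

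What is missing is the choice of the scale $t$, which is the one quantitative point of the construction. The blocks of $\cD(Z)$ have the following sizes:
\begin{itemize}
\item the $S\times S$ block is $t^2\big(k\diag(x_S^{\circ-2})-x_S^{\circ-1}(x_S^{\circ-1})^T\big)$, of size about $t^2/\min_{i\in S}x_i^2$;
\item the $S^c\times S^c$ block has size about $\|p\|^2/t^2$;
\item the cross block is $x_S^{\circ-1}p_{S^c}^T/k$, of size about $\|p\|$ regardless of $t$.
\end{itemize}
With $t$ fixed, $w$ is small as you say, but $Z(C)\to t^2\bar u\bar u^T\neq 0$. Here $\bar u$ is the entrywise inverse of $\bar x$ on $S$ and zero off $S$. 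This contradicts your claim $Z(C)\to 0$, so $M(C)$ does not converge to $\bar Q$, and your continuity argument does not apply as written.

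You need $t\to 0$ and $\|p\|/t\to 0$ simultaneously. The paper takes $t=\alpha=\sqrt{\|p\|}$. Since $p=Q(C)x(C)\to\bar Q\bar x=0$ and $\min_{i\in S}|x_i(C)|$ stays bounded away from zero, this gives $\|\cD(Z)\|=O(\|p\|)\to 0$. Your Weyl argument then finishes the proof.

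Alternatively, a fixed $t$ with $t^2\lambda_{\max}\big(\cD(\bar u\bar u^T)\big)<\nu_2(\bar Q)$ also works. In that case compare $M(C)$ with $\bar Q-t^2\cD(\bar u\bar u^T)$ rather than with $\bar Q$; that matrix is PSD with kernel $\langle\bar x\rangle$, since $\cD(\bar u\bar u^T)\succeq 0$ and $\cD(\bar u\bar u^T)\bar x=0$.
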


    When noise appears, denote $( x, \lambda)$ as the optimal primal and dual solution of \eqref{sparse_qcqp} on \(\vX_{\supp (\bar x)}\), which is defined as 
	\begin{equation}\label{sparse_qcqp_supp}
        \tag{P-supp}
		\min_{x\in\RR^n}\quad x^T C x
		\quad\st\quad x \in \vX_{\supp (\bar x)}
	\end{equation}
	with respect to  \(C\) near \(\bar C\).
    Before proceeding to the proof of \Cref{thm:sparse_local_stability}, we first show that \(x \rightarrow \bar x\) as \(C\rightarrow \bar C\).

    \begin{lem}\label{lem:primal_convergence}
        For each \(C\), let \(x\) be the optimal solution of \eqref{sparse_qcqp_supp}. Then \(x\) converges to \(\bar x\), up to sign.
    \end{lem}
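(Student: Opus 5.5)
We argue by compactness and uniqueness: every limit point of the minimizers $x$ of \eqref{sparse_qcqp_supp} is a minimizer of \eqref{sparse_qcqp_supp} at $\bar C$, and $\pm\bar x$ are the only such minimizers.

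\emph{Step 1: $\bar x$ is the unique minimizer at $\bar C$, up to sign.} Since $\bar Z=\vzero$, dual feasibility gives $\bar Q=\bar C-\cA(\bar\lambda)\succeq 0$. For any $x\in\vX_{\supp(\bar x)}$,
\[
x^T\bar C x = x^T\bar Q x+\bar\lambda\cdot b .
\]
By strong duality $\bar\lambda\cdot b=\val(D)=\val(P)=\bar x^T\bar C\bar x$, so $\bar x^T\bar Q\bar x=0$. Hence $\bar x\in\ker\bar Q$, which is one-dimensional by (d). Any other minimizer $x$ of \eqref{sparse_qcqp_supp} at $\bar C$ also satisfies $x^T\bar Q x=0$, so $x=t\bar x$ for some scalar $t$. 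We still need $t=\pm1$. If $m\ge1$ and some $b_i\neq 0$, then $t^2 b_i=b_i$ forces $t=\pm1$. If all $b_i=0$, then $\vX_{\supp(\bar x)}$ is a cone and the minimum is $0$, which makes the problem degenerate; I would argue that the intended setting, where the constraints normalize $x$ as in sparse PCA, excludes this case, or else note that $g(\bar x)=0$ together with ACQ pins down the scale. The main obstacle is making this scaling argument rigorous in general, since everything else is soft.

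\emph{Step 2: boundedness of the minimizers.} For $C$ near $\bar C$ we need the minimizers $x$ of \eqref{sparse_qcqp_supp} to stay in a compact set. If the feasible set $\vX_{\supp(\bar x)}$ is compact, as under the normalization $\|x\|=1$, this is immediate. Otherwise, suppose $\|x_j\|\to\infty$ along a sequence $C_j\to\bar C$, and pass to a subsequence with $x_j/\|x_j\|\to u$. Dividing the constraints by $\|x_j\|^2$ gives $u^TA_iu=0$ for all $i$. Dividing the objective gives
\[
u^T\bar C u=u^T\bar Q u\ge 0 .
\]
Comparing with the feasible point $\bar x$, the optimal values $x_j^TC_jx_j\le\bar x^TC_j\bar x$ stay bounded above, so $u^T\bar Qu=0$. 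Then $u\in\ker\bar Q$, so $u=\pm\bar x/\|\bar x\|$. But $\bar x^TA_i\bar x=b_i$ while $u^TA_iu=0$, so all $b_i=0$, which is the degenerate case excluded in Step 1.

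\emph{Step 3: limit argument.} Take any sequence $C_j\to\bar C$ with minimizers $x_j$. By Step 2 we may extract a convergent subsequence $x_j\to x^*$. The set $\vX_{\supp(\bar x)}$ is closed, so $x^*$ is feasible. Optimality of $x_j$ gives $x_j^TC_jx_j\le\bar x^TC_j\bar x$, and letting $j\to\infty$ yields $x^{*T}\bar Cx^*\le\bar x^T\bar C\bar x$. So $x^*$ is a minimizer at $\bar C$, and Step 1 gives $x^*=\pm\bar x$.

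Since every subsequence of minimizers has a further subsequence converging to $\pm\bar x$, we conclude that $\min(\|x-\bar x\|,\|x+\bar x\|)\to0$ as $C\to\bar C$.
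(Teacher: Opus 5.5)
Your proof is correct, and it takes a genuinely different route from the paper's.

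\textbf{The paper's route.} The paper argues quantitatively. It normalizes $\bar\lambda=0$ by replacing $C$ with $C-\cA(\bar\lambda)$, and picks a constraint with $b_1=1$. It then splits $x_S=x'+y$ with $x'\in\langle\bar x_S\rangle$ and $y\perp\bar x_S$, and uses that constraint to get $\|x'\|\le 1+\alpha\|y\|$. Combined with the second eigenvalue $c>0$ of $\bar C_S$, this shows that any feasible point with $\|y\|>1$ has objective at least $c/2$, while $\bar x$ has objective at most $c/4$. So the minimizers lie in an explicit compact set. Continuity of the optimal value (the Bonnans--Shapiro lemma) then gives $x_S^TC_Sx_S\to 0$, and $c\|y\|^2\le x_S^TC_Sx_S+\|C_S-\bar C_S\|\|x_S\|^2$ forces $y\to0$.

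\textbf{Comparison with your route.} You instead identify the unperturbed solution set as $\{\pm\bar x\}$ through $\ker\bar Q=\langle\bar x\rangle$. You get boundedness from a recession-direction argument and conclude by a subsequence argument. Your version is softer, but cleaner and somewhat more general:
\begin{itemize}
\item It never needs the identity $(x')^T(A_1)_Sx'=\|x'\|^2$ used in the paper's computation, which holds only when $\|\bar x\|=1$.
\item It handles possibly non-unique minimizers of the perturbed problem at no extra cost.
\end{itemize}
In exchange, the paper's version yields an explicit perturbation radius $\|C_S-\bar C_S\|\le\min\{(1+\alpha)^{-2},\|\bar x\|^{-2}\}\,c/4$ and an explicit inequality controlling $\|y\|$. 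That is the kind of information needed for quantitative exactness regions. One more point: if you run your Step 2 on the sublevel sets $\{x\in\vX_{\supp(\bar x)}: x^TCx\le\bar x^TC\bar x\}$ rather than on minimizers, it shows these are uniformly bounded for $C$ near $\bar C$. Hence minimizers exist, which the statement tacitly presupposes.

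\textbf{The degenerate case.} Drop the suggestion that ACQ could pin down the scale when all $b_i=0$: in that case the lemma is simply false. For instance, take $m=0$. Then $\vX_{\supp(\bar x)}$ is the coordinate subspace $\{x: x_i=0,\ i\notin\supp(\bar x)\}$ and ACQ holds trivially. Let $\bar C\succeq0$ have kernel $\langle\bar x\rangle$. For $C=\bar C+\epsilon I$, the unique minimizer of (P-supp) is $x=0$, which does not approach $\pm\bar x$. So the hypothesis that some $b_i\neq0$ is genuinely necessary, not a removable technicality. The paper assumes exactly this (``at least one $b_i\neq 0$, say $b_1=1$''), and you should state it explicitly as a hypothesis.
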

    \begin{proof}
        See \Cref{append:lem_primal_convergence}.
    \end{proof}

    Then we show that there exists \(\lambda\) such that \(\lambda\rightarrow\bar \lambda\) as \(C\rightarrow \bar C\).
	
	\begin{lem}\label{lem:dual_stability}
		Given a feasible solution \(\bar x\in\RR^n\) of \eqref{sparse_qcqp},
		suppose that ACQ holds at $\bar x$ for
		\(\vX_{\supp(\bar x)}\).
        Let \(S:=\supp(\bar x)\), and \(g_s := g|_{S} : \RR^S \to \RR^m \) be the restriction of $g$ to the variables on $S$.
        Let \(\bar J:=\nabla g_s(\bar x_S) \in \RR^{m \times k}\) be the Jacobian. 
		Given a Lagrange multiplier \(\bar \lambda \in \RR^m\) at \(\bar x\) with respect to \(\vX_{\supp (\bar x)}\),
		then there exists \(\lambda \in \RR^m\) such that \(C_S\bar x_S - \bar J^T\lambda =0\) and
		\begin{equation}
			\|\bar\lambda -  \lambda\| \leq \frac{1}{\sigma_s} \|C_S\bar x_S -\bar J^T\bar\lambda\|\leq \frac{1}{\sigma_s}\|\Delta C_S\|\|\bar x_S \|.
		\end{equation}
	\end{lem}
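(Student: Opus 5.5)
The plan is to treat this as a least-squares projection onto the affine set of multipliers. First I record what ``Lagrange multiplier with respect to $\vX_{\supp(\bar x)}$'' means in the restricted coordinates. Since $g_s(x_S)=(x_S^T (A_i)_S x_S - b_i)_i$, the Jacobian is $\bar J = 2\big((A_i)_S\bar x_S\big)_i^T$. Absorbing the factor $2$ into the normalization that makes the definition consistent, the condition $(\bar C\bar x)_j-\sum_i\bar\lambda_i(A_i\bar x)_j=0$ for $j\in S$ reads $\bar C_S\bar x_S=\bar J^T\bar\lambda$; here I use $\bar x_j=0$ for $j\notin S$, so $(\bar C\bar x)_S=\bar C_S\bar x_S$. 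I interpret $\sigma_s$ as the smallest nonzero singular value of $\bar J$.

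The key structural step uses ACQ to show that the new equation $C_S\bar x_S=\bar J^T\lambda$ is solvable, i.e.\ that $C_S\bar x_S\in\operatorname{range}(\bar J^T)$. This is the step I expect to be the main obstacle, since it is not automatic for an arbitrary perturbation $\Delta C$: $\Delta C_S\bar x_S$ need not lie in the row space of $\bar J$.

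I would argue it as follows. ACQ says $\vX_{S}$ is a smooth manifold near $\bar x$ with tangent space equal to $\ker\bar J$, so $\operatorname{range}(\bar J^T)=(\ker \bar J)^\perp$ is the normal space. Now take $x$ to be the optimum of \eqref{sparse_qcqp_supp} for $C$, which by \Cref{lem:primal_convergence} tends to $\bar x$. By first-order optimality on the manifold under ACQ, $C_Sx_S\in\operatorname{range}(J(x)^T)$. The cleanest route, however, is to read the lemma as constructing $\lambda$ as the least-squares solution
\[
\lambda=\bar\lambda+(\bar J^T)^{\dagger}\big(C_S\bar x_S-\bar J^T\bar\lambda\big),
\]
and then to note that the residual $C_S\bar x_S-\bar J^T\bar\lambda$ lies in $\operatorname{range}(\bar J^T)$. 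This holds whenever $\bar x$ is itself stationary for $C$ on $\vX_S$, which is the setting in which the lemma is applied. If that reading fails, I would fall back on the version with $x$ in place of $\bar x$, which gives exact solvability, and carry through the same estimate.

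Granting solvability, the estimate is routine. Because $\lambda-\bar\lambda=(\bar J^T)^\dagger r$ with $r=C_S\bar x_S-\bar J^T\bar\lambda$, and $\|(\bar J^T)^\dagger\|=1/\sigma_s$, we get $\|\lambda-\bar\lambda\|\le \|r\|/\sigma_s$. Finally, using $\bar C_S\bar x_S=\bar J^T\bar\lambda$,
\[
r=(C_S-\bar C_S)\bar x_S=\Delta C_S\,\bar x_S,
\]
so $\|r\|\le\|\Delta C_S\|\,\|\bar x_S\|$, which gives the second inequality.
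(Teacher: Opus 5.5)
Your estimate is the paper's argument. The paper writes $\bar J^T(\lambda-\bar\lambda)=C_S\bar x_S-\bar J^T\bar\lambda$, applies the pseudo-inverse (norm $1/\sigma_s$), and inserts $\bar C_S\bar x_S=\bar J^T\bar\lambda$. Your explicit choice $\lambda=\bar\lambda+(\bar J^T)^\dagger r$ is what makes the paper's equality $\|\lambda-\bar\lambda\|=\|(\bar J^T)^\dagger r\|$ valid when multipliers are not unique. Your factor-$2$ normalization is also genuinely needed, since with the paper's multiplier definition $\bar C_S\bar x_S=\tfrac12\bar J^T\bar\lambda$.

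The solvability step you flag is a real gap, and the paper's proof does not close it either: it simply posits $\lambda$. In fact the existence claim is false as stated. In sparse PCA, $g(x)=\|x\|^2-1$, so $\operatorname{range}(\bar J^T)=\operatorname{span}(\bar x_S)$, and $C_S\bar x_S=\bar J^T\lambda$ forces $\bar x_S$ to be an eigenvector of $C_S$. Take $k=2$, $\bar x=(3/5,4/5,0,\dots,0)^T$, $\bar C=-\bar x\bar x^T$ and $\Delta C=\epsilon(e_1e_2^T+e_2e_1^T)$. Then $C_S\bar x_S=-\bar x_S+\epsilon(4/5,3/5)^T\notin\operatorname{span}(\bar x_S)$ for every $\epsilon\neq 0$. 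Your least-squares $\lambda$ still obeys the norm bound, but it only gives $C_S\bar x_S-\bar J^T\lambda=P_{\ker\bar J}\,r\neq 0$.

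Your first workaround does not rescue the statement. Assuming $\bar x$ is stationary for $C$ turns the existence claim into a hypothesis. It is also not the setting of \Cref{thm:sparse_local_stability}: there $\bar x$ is optimal for $\bar C$, and the lemma is invoked to produce a multiplier at the perturbed optimum $x$ of \eqref{sparse_qcqp_supp}, i.e.\ with $(Px)_j=0$ for $j\in S$.

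Your fallback is the right repair, but the estimate does not carry over verbatim. Write $J(x):=\nabla g_s(x_S)$. Since $g_s$ vanishes on $\vX_{\supp(\bar x)}$, $\ker J(x)$ contains the tangent space at $x$, whose dimension is $|S|-s$ under ACQ. Hence $\rank J(x)\le s$, while lower semicontinuity of rank gives $\rank J(x)\ge\rank\bar J=s$. So $\ker J(x)$ equals the tangent space, first-order optimality gives $C_Sx_S\in\operatorname{range}(J(x)^T)$, and the smallest nonzero singular value of $J(x)$ tends to $\sigma_s$. Taking $\lambda=\bar\lambda+(J(x)^T)^\dagger r_x$ with
\[
r_x=C_Sx_S-J(x)^T\bar\lambda=\Delta C_S\,x_S+\bar C_S(x_S-\bar x_S)-\bigl(J(x)-\bar J\bigr)^T\bar\lambda,
\]
the bound acquires extra terms of order $\|x-\bar x\|$. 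Together with \Cref{lem:primal_convergence} this still gives $\lambda\to\bar\lambda$, which is all \Cref{thm:sparse_local_stability} needs. However, the explicit bound $\tfrac{1}{\sigma_s}\|\Delta C_S\|\|\bar x_S\|$, which \Cref{thm:general_exact_region} relies on, has to be corrected accordingly.
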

	\begin{proof}
		Since \(\bar\lambda\) is a Lagrange multiplier at \(\bar x\) with respect to \(\vX_{\supp (\bar x)}\), then
		\begin{equation}
			\begin{split}
				\bar J^T(\lambda- \bar\lambda)= C_S\bar x_S - \bar J^T \bar \lambda
			\end{split}
		\end{equation}
		which implies that 
        \begin{equation}
            \begin{split}
                \|\lambda- \bar\lambda\| = \|\bar J^\dagger(C_S\bar x_S - \bar J^T\bar\lambda)\bar\| \leq &  \frac{1}{\sigma_s} \|C_S\bar x_S -\bar J^T\bar\lambda\|\\
                = & \frac{1}{\sigma_s}\|C_S\bar x_S - \bar C_S\bar x_S + \bar C_S \bar x_S - \bar J^T\bar\lambda\|\\
                \leq & \frac{1}{\sigma_s}\|\Delta C_S\|\|\bar x_S \|
            \end{split}
        \end{equation}
		where \(\bar J^\dagger\) is the pseudo-inverse of \(\bar J\), which satisfies \(\|\bar J^\dagger\|_{op} = \frac{1}{\sigma_s}\).
	\end{proof}

    We are ready to prove \Cref{thm:sparse_local_stability}.
	\begin{proof}[Proof of \Cref{thm:sparse_local_stability}]
		Let $x$ be the optimal primal solution of \eqref{sparse_qcqp_supp}. From \Cref{lem:primal_convergence}, it is true that $x$ is close to $\bar x$. It follows that \(\supp(x)=\supp(\bar x)\), as $\bar x$ has exactly $k$ non-zeros.  Since ACQ holds at \(\bar x\) for $\vX_{\supp (\bar x)}$, then from \Cref{lem:dual_stability}, there exists dual multiplier $\lambda$ of \eqref{sparse_qcqp_supp} which is close to $\bar\lambda$. Let $P = C - \sum_{i=1}^m\lambda_iA_i$
	    and $ p =  P x$. Then we have
		\begin{equation}\label{eq:stability_first_order_condition}
			p_j=(Cx)_j - \sum_{i=1}^m\lambda_i(A_ix)_j=0,\;\forall j\in \supp(\bar x).
		\end{equation} 
        
		Without loss of generality, we assume that \(\supp(\bar x):=\{1,2,\dots,k\}\). Let \(x^{-1}:=(x^{-1}_1, x^{-1}_2,\dots, x^{-1}_k, \vzero)\).
        Consider the matrix 
		\begin{equation}
			Z := \zeta\zeta^T \in \SS^n,
			\quad\text{ where }\quad
			\zeta := \alpha x^{-1}  - (k\alpha)^{-1} p.
		\end{equation}
		Let $\cD(Z) = k\diag(Z) - Z$ and the dual slack matrix in \eqref{sparse_sdp_dual} is $Q = P  - \cD(Z)$.
		We will show that
		\begin{gather}\label{eq:complementarity}
			Q \bullet xx^T = 0, \quad
			Q \succeq 0,\quad\text{and}\quad\corank(Q)=1.
		\end{gather}
		By plugging the expression of $\zeta$, it is true that the complementary slackness holds.
		We proceed to prove that $ Q \succeq 0$,
		or equivalently,
		that $w^T Q w \geq 0$ for arbitrary~$w$. Consider a projection $\varphi: \RR^n \mapsto \langle  x\rangle$ where $\langle x\rangle$ denotes the linear span of~$x$, and $\varphi^\perp: \RR^n \mapsto \langle x\rangle^\perp$. Decomposing $w = \varphi(w) + \varphi^\perp(w)$, then we observe that
		\begin{equation}
			\begin{split}
				\varphi(w)^T Q \varphi^\perp(w) &= \varphi(w)^T (P - \cD(Z)) \varphi^\perp(w)\\
				&= \tfrac{\|x\|}{\|\varphi(w)\|}\Big(p^T \varphi^\perp(w)- \big(k\alpha^2 x\circ x^{-2} -(x^T\zeta) \zeta^T\big)\varphi^\perp(w)\Big)\\
				&= \tfrac{\|x\|}{\|\varphi(w)\|}\Big(p^T \varphi^\perp(w)-k \alpha\big(\alpha x^{-1} - \zeta\big)^T\varphi^\perp(w)\Big)\\
				&=0
			\end{split}
		\end{equation}
		From \Cref{lem:dual_stability}, it is true that \(P \rightarrow \bar Q\) as \(C\rightarrow\bar C\) which implies that
		\begin{equation}\label{eq:stable_P}
			\begin{split}
				\varphi^\perp(w)^T P \varphi^\perp(w)\rightarrow \varphi^\perp(w)^T \bar Q \varphi^\perp(w) \geq  \|\varphi^\perp(w)\|^2 \nu_2(\bar Q)>0,\; \text{as}\;\|C - \bar C\|\rightarrow 0
			\end{split}
		\end{equation}
		Then we have \(\|p\|=\|Px\|\leq\|P\bar x\| + \|P(x-\bar x)\| \rightarrow \|\bar Q \bar x\|=0\), as \(C\rightarrow\bar C\).
        By letting \(\alpha = \sqrt{\|p\|}\), it follows that
		\begin{equation}\label{eq:stable_D}
			\begin{split}
				&\varphi^\perp(w)^T \cD(Z) \varphi^\perp(w) \\
				=& k\alpha^2\sum_{i=1}^k \big(\tfrac{\varphi_i^\perp(w)}{ x_i}\big)^2 + \frac{1}{k\alpha^2}\sum_{i=k+1}^n \big(\varphi_i^\perp(w) p_i\big)^2  -  \frac{1}{(k\alpha)^2}\langle\varphi^\perp(w), p\rangle^2\\
				\leq& O(\|\varphi^\perp(w)\|^2\|p\|) \rightarrow 0\; \text{as}\;\|C - \bar C\|\rightarrow 0
			\end{split}
		\end{equation}
		Then it follows that
		\begin{equation}
			\begin{split}
				w^T Q w
				&= \varphi(w)^T Q \varphi(w) + 2 \varphi(w)^T Q \varphi^\perp(w) + \varphi^\perp(w)^T Q \varphi^\perp(w)\\
				&= \varphi^\perp(w)^T Q \varphi^\perp(w)\\
				&= \varphi^\perp(w)^T (P -\cD(Z)) \varphi^\perp(w) \\
				&\geq 0, \; \text{as}\;\|C - \bar C\|\rightarrow 0
			\end{split}
		\end{equation}
		We have shown that \eqref{eq:complementarity} holds. Thus, \eqref{sparse_sdp} is stable near \(\bar C\).
	\end{proof}
	
	\subsection{Sufficient conditions for exactness}
	
	The following theorem provides some sufficient conditions for \eqref{sparse_sdp} to remain exact for a specific objective  matrix~$C$. 
	
	\begin{thm}\label{thm:general_exact_region}
		Let $C = \bar C + \Delta C$ and let $x$ be the optimal solution of \eqref{sparse_qcqp} for such~$C$. Let  $s=\codim_{\bar x}(\vX_{\supp(\bar x)})$, and $\sigma_s > 0$ be the $s$-th smallest singular value of \(\nabla g_s(\bar x_S)\). The relaxation \eqref{sparse_sdp} is exact if the following condition holds:
		\begin{equation}\label{eq:exact_region}
			\begin{gathered}
				\nu_2(\bar Q) \;>\;
				\eta \Big(
				\big(1
				+\frac{1}{\sigma_s}\|\cA\|\|\bar x \|\big)\big(1 + \frac{1}{c_{ x}}\| x\|\big)\|\Delta C\|
				+ \frac{1}{c_{ x}}\|\bar Q\|\| x- \bar x\|
				\Big),
				\\
				\eta := \Big(1 + \frac{\|\bar x\|^2\langle  x,z_2\rangle^2}{\langle x, \bar x\rangle^2}\Big)
			\end{gathered}
		\end{equation}
		where $\nu_2(\bar Q)$ is the second smallest eigenvalue of $\bar Q$, $z_2$ is the corresponding eigenvector of $\nu_2(\bar Q)$, \(\cA\) is the linear map \( \lambda \mapsto \sum_{i=1}^m\lambda_iA_i\), and $c_{x}:= \min\{|x_i|: i \in \supp(x)\}$ is a constant. 
	\end{thm}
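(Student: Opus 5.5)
The plan is to make the proof of \Cref{thm:sparse_local_stability} quantitative. Let $x$ be optimal for \eqref{sparse_qcqp} with objective $C$, and assume (as in that proof) that $\supp(x)=\supp(\bar x)=:S$. By \Cref{lem:dual_stability}, choose $\lambda$ with $C_S\bar x_S=\bar J^T\lambda$ and
\[
\|\lambda-\bar\lambda\|\le \sigma_s^{-1}\|\Delta C\|\|\bar x\|.
\]
Set $P=C-\cA(\lambda)$ and $p=Px$. Then $p_S=0$ by stationarity, and we build the same certificate $Z=\zeta\zeta^T$ with $\zeta=\alpha x^{-1}-(k\alpha)^{-1}p$. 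Complementarity $Q\bullet xx^T=0$ and the vanishing of the cross terms $\varphi(w)^TQ\varphi^\perp(w)$ hold exactly, as before. So exactness reduces to showing
\[
\varphi^\perp(w)^T\big(P-\cD(Z)\big)\varphi^\perp(w)>0 \quad\text{for } \varphi^\perp(w)\neq 0,
\]
with the limits in \eqref{eq:stable_P}--\eqref{eq:stable_D} replaced by explicit bounds.

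\textbf{Bounding $P$.} Write $P=\bar Q+\Delta C-\cA(\lambda-\bar\lambda)$. This gives
\[
\|P-\bar Q\|\le \big(1+\sigma_s^{-1}\|\cA\|\|\bar x\|\big)\|\Delta C\|.
\]
Next bound $\|p\|$. Since $\bar Q\bar x=0$, we have $p=(P-\bar Q)x+\bar Q(x-\bar x)$, so
\[
\|p\|\le \|P-\bar Q\|\|x\|+\|\bar Q\|\|x-\bar x\|.
\]
The $\cD(Z)$ term is then controlled as in \eqref{eq:stable_D}. The first sum is at most $k\alpha^2 c_x^{-2}\|v\|^2$ for $v=\varphi^\perp(w)$, and the $p$-terms are of order $\|p\|^2/(k\alpha^2)$. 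Choosing $\alpha$ to balance these (roughly $\alpha^2\sim c_x\|p\|/k$) gives $v^T\cD(Z)v\lesssim c_x^{-1}\|p\|\|v\|^2$. Combining,
\[
v^TPv-v^T\cD(Z)v\ \ge\ v^T\bar Q v-\Big[\big(1+\sigma_s^{-1}\|\cA\|\|\bar x\|\big)\big(1+c_x^{-1}\|x\|\big)\|\Delta C\|+c_x^{-1}\|\bar Q\|\|x-\bar x\|\Big]\|v\|^2,
\]
which reproduces the bracket in \eqref{eq:exact_region}.

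\textbf{Main obstacle: the factor $\eta$.} The remaining issue is the lower bound on $v^T\bar Qv$ for $v\perp x$. The earlier proof used $v^T\bar Qv\ge\nu_2(\bar Q)\|v\|^2$, but that is only valid for $v\perp\bar x$, the kernel of $\bar Q$, and not for $v\perp x$. I would decompose $v=a\bar x/\|\bar x\|+u$ with $u\perp\bar x$. Then $v^T\bar Qv=u^T\bar Qu\ge\nu_2\|u\|^2$. From $\langle v,x\rangle=0$ we get $a\langle\bar x,x\rangle/\|\bar x\|=-\langle u,x\rangle$, so
\[
\|v\|^2=\|u\|^2+\frac{\|\bar x\|^2\langle u,x\rangle^2}{\langle x,\bar x\rangle^2}.
\]
Controlling $\langle u,x\rangle^2/\|u\|^2$ by $\langle x,z_2\rangle^2$ should yield $\|v\|^2\le\eta\|u\|^2$, hence $v^T\bar Qv\ge\nu_2(\bar Q)\|v\|^2/\eta$. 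Justifying this reduction to the direction $z_2$ is the delicate part. I would try the case where $u$ is aligned with $z_2$ first, and otherwise absorb the remaining directions using the higher eigenvalues. Once this holds, multiplying through by $\eta$ shows that \eqref{eq:exact_region} forces $Q\succeq0$ with corank one, so the relaxation is exact.
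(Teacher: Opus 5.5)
\textbf{Overall.} Up to the factor $\eta$, your argument is the paper's proof. It uses the same certificate $Z=\zeta\zeta^T$, the same bound $\|P-\bar Q\|\le(1+\sigma_s^{-1}\|\cA\|\|\bar x\|)\|\Delta C\|$, and the same bound on $p$. To get the bracket in \eqref{eq:exact_region} with constant exactly one rather than ``$\lesssim$'', do what the paper does: use $\cD(Z)\preceq k\diag Z$ and take $\alpha^2=c_x\|p\|_\infty/k$, so that $\max_i kZ_{ii}=\|p\|_\infty/c_x$. Your diagnosis that $\nu_2(\bar Q)$ only controls directions orthogonal to $\bar x$ is right, and so is your identity for $\|v\|^2$.

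\textbf{The gap.} The step you flag cannot be completed as you sketch it. The inequality $\|v\|^2\le\eta\|u\|^2$ for $v\perp x$ (equivalently $v^T\bar Qv\ge\nu_2(\bar Q)\|v\|^2/\eta$ on $\langle x\rangle^\perp$) is false in general. The higher eigenvalues cannot absorb the discrepancy without an extra gap assumption between $\nu_3$ and $\nu_2$.

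Here is a counterexample. Let $\bar Q$ have eigenvalues $0<\nu_2<\nu_3$ with eigenvectors $\bar x/\|\bar x\|,z_2,z_3$, and let $x=\cos\theta\,\bar x/\|\bar x\|+\sin\theta\,z_3$. Then $\langle x,z_2\rangle=0$, so $\eta=1$. Now take $v=-\sin\theta\,\bar x/\|\bar x\|+\cos\theta\,z_3$. It is a unit vector orthogonal to $x$ with $v^T\bar Qv=\nu_3\cos^2\theta$, and this is $<\nu_2$ whenever $\nu_3<\nu_2/\cos^2\theta$.

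What your decomposition actually gives, by Cauchy--Schwarz, is
\[
\langle u,x\rangle^2\le\|u\|^2\sum_{i\ge2}\langle x,z_i\rangle^2,
\qquad\text{hence}\qquad
\|v\|^2\le\frac{\|x\|^2\|\bar x\|^2}{\langle x,\bar x\rangle^2}\,\|u\|^2 .
\]
This is exactly the paper's claim \eqref{eq:claim}, reading $\|x'\|$ as $\|x\|$.

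\textbf{What the argument proves.} Yours and the paper's argument prove the statement with $\eta$ replaced by
\[
\frac{\|x\|^2\|\bar x\|^2}{\langle x,\bar x\rangle^2}=1+\frac{\|\bar x\|^2\sum_{i\ge2}\langle x,z_i\rangle^2}{\langle x,\bar x\rangle^2}.
\]
The paper passes from \eqref{eq:claim} to \eqref{eq:exact_region} without comment. The two factors coincide only when the component of $x$ orthogonal to $\bar x$ is parallel to $z_2$. That holds if this component lies in the $\nu_2$-eigenspace and $z_2$ is chosen accordingly, as in \Cref{thm:stable_sdp}, where $\bar Q=\beta(I-\bar x\bar x^T)$. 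You should either state the result with the factor $\cos^{-2}\angle(x,\bar x)$ or add such a hypothesis.

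\textbf{A second slip, also in the paper.} The claim $p_S=0$ requires $\lambda$ to be a Lagrange multiplier at $x$. The $\lambda$ you take from \Cref{lem:dual_stability} is instead tied to $\bar x$, and the system $C_S\bar x_S=\bar J^T\lambda$ is generally unsolvable when $s<k$. The correct multiplier satisfies $(C-\cA(\lambda))_Sx_S=0$. Subtracting $\bar Q_S\bar x_S=0$ shows that $\lambda-\bar\lambda$ is bounded in terms of $\|\Delta C\|\|x\|+\|\bar Q\|\|x-\bar x\|$ and the smallest nonzero singular value of $\nabla g_s(x_S)$. It is not bounded by $\sigma_s^{-1}\|\Delta C\|\|\bar x\|$ alone.
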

	\begin{proof}[Proof of Theorem~\ref{thm:general_exact_region}]
		Following the same notation as in \Cref{thm:sparse_local_stability}, denote $(x, \lambda)$ as the optimal primal and dual solution of \eqref{sparse_qcqp} on \(\vX_{\supp (\bar x)}\) with respect to  \(C\) near \(\bar C\). Let $P = C - \sum_{i=1}^m\lambda_iA_i$
		and let $ p =  P x$.
		Consider a projection $\varphi: \RR^n \mapsto \langle  x\rangle$ where $\langle x\rangle$ denotes the linear span of~$x$, and $\varphi^\perp: \RR^n \mapsto \langle x\rangle^\perp$.
		Define 
		\[v(Y) := \underset{y \in \RR^n}{\min} \;\varphi^\perp(y)^T Y \varphi^\perp(y) \;\st\; \|\varphi^\perp(y)\|=1 .
		\]
		Since we have that $P = \bar Q + (C - \bar C) + (\lambda - \bar \lambda)^T \cA$, then it is true that
		\begin{equation}\label{eq:exact_region_ineq1}
			\begin{split}
				v(P) &\geq v(\bar Q) + v\big(( C - \bar C) + \cA(\lambda - \bar \lambda)\big)\\
				&\geq v(\bar Q) - \|\Delta C\| - \frac{1}{\sigma_s}\|\cA\|\|\lambda - \bar\lambda\|\\
				&\geq v(\bar Q) - \|\Delta C\| - \frac{1}{\sigma_s}\|\cA\|\| C_S\bar x_S - \bar\lambda^T\bar J\| \\
				&\geq v(\bar Q) - \|\Delta C\| - \frac{1}{\sigma_s}\|\cA\|\| \Delta C\|\|\bar x \|=:  g
			\end{split}
		\end{equation}
		where we used that \(\| C_S\bar x_S - \bar\lambda^T\bar J\|\leq \|\Delta C\|\|\bar x \|\) in the last inequality.
		Consider the same matrix as we constructed in \Cref{thm:sparse_local_stability},
		\begin{equation}
			Z := \zeta\zeta^T \in \SS^n,
			\quad\text{ where }\quad
			\zeta := \alpha x^{-1}  - (k\alpha)^{-1} p.
		\end{equation}
		Let $\cD(Z) = k\diag(Z) - Z$, and let $\alpha = \sqrt{\frac{c_{ x} \|p\|_\infty}{k}}$.
		Then we have
		\begin{equation}\label{eq:exact_region_ineq2}
			v(\cD (Z)) \leq \max\{k\diag(Z)\} = \max\{\tfrac{k\alpha^2}{c^2_{x}}, \tfrac{1}{k\alpha^2} \| p\|^2_\infty\} = \tfrac{\|p\|_\infty}{c_x}
		\end{equation}
		Furthermore, we have
		\begin{equation}\label{eq:exact_region_ineq3}
			\begin{split}
				\| p\|_\infty =  \|P x\|_\infty
				\leq & \Big\|\big(\bar Q + (C - \bar C) + (\lambda - \bar\lambda)^T \cA\big)x\Big\|\\
				\leq & \big(1 +\frac{1}{\sigma_s}\|\cA\|\|\bar x\|\big)\| x\|\|\Delta C\|
				+\|\bar Q\|\| x- \bar x\| =:  w
			\end{split}
		\end{equation}
        
        Let \(\nu_i(\bar Q)\) be the second \(i\)-th smallest eigenvalue of \(\bar Q\), and \(z_i\) be the associated eigenvector. We claim that 
        \begin{equation}\label{eq:claim}
        v(\bar Q) \geq \frac{\langle x, \bar x\rangle^2}{\|\bar x\|^2\|x'\|^2} \nu_2(\bar Q).
        \end{equation}
        Consider \(y = \sum_{i=1}^n \langle y,z_i\rangle z_i\) which satisfies \(y^T x = 0\), and \(\|y\|=1\). Let \(x = \sum_{i=1}^n \langle x,z_i\rangle z_i\). It is true that
        \[
        y^T \bar Q y = \sum_{i=2}^n \nu_i(\bar Q) \langle y,z_i\rangle^2 \geq \nu_2(\bar Q) (1 - \langle y,\tfrac{\bar x}{\|\bar x\|}\rangle^2)
        \]
        Since we have \(y^Tx=0\) which is equivalent to 
        \[
         \langle y,\tfrac{\bar x}{\|\bar x\|} \rangle \langle x,\tfrac{\bar x}{\|\bar x\|} \rangle + \sum_{i=2}^n \langle x,z_i \rangle  \langle y,z_i \rangle = 0
        \]
        it follows that
        \[
         \langle y,\tfrac{\bar x}{\|\bar x\|} \rangle \langle x,\tfrac{\bar x}{\|\bar x\|} \rangle \leq \sqrt{\textstyle\sum_{i=2}^n  \langle x,z_i \rangle^2} \sqrt{1 - \langle y,\tfrac{\bar x}{\|\bar x\|} \rangle^2}
        \]
        which implies that
        \[
        \langle y,\tfrac{\bar x}{\|\bar x\|} \rangle^2\leq 1 - \frac{\langle x,\bar x \rangle^2}{\|x\|^2\|\bar x\|^2}
        \]
        Hence, the claim in \eqref{eq:claim} is true. 
        So if \eqref{eq:exact_region} holds, then $\frac{1}{c_{x}}  w <  g$. It follows that
		\[
		v(P-\cD(Z)) \geq v(P) + v(-\cD(Z))\geq g -  \frac{1}{c_{x}}  w> 0
		\]
		and consequently \eqref{sparse_sdp} is exact. 
	\end{proof}

	\section{Sparse PCA}\label{section:spca}
	
	For the specific case of \eqref{sparse_pca},
	our primal SDP becomes
	\begin{equation}\label{sdp_spca}
		\tag{\small{Q-spca}}
		\max_{X\in\mathbb{S}^n} \quad  \Sigma \bullet X
		\quad\text{ s.t. }\quad \tr(X)= 1, \quad
		X \in \Szero{n}{k}
	\end{equation}
	and its dual SDP is
	\begin{equation}\label{sdp_spca_dual}
		\tag{D-spca}
		\begin{aligned}
			\min \quad \rho
			\quad\text{ s.t. }\quad
			\rho I_n \succeq  k\diag(Z) - Z + \Sigma,\quad
			Z \succeq 0
		\end{aligned}
	\end{equation}
	In this section, we investigate the theoretical guarantees of these SDPs.
	First, we establishes an upper bound on the optimality ratio, that is, the ratio of the optimal value of \eqref{sdp_spca} and that of \eqref{sparse_pca}.
	Second, we provide explicit conditions under which \eqref{sdp_spca} is exact.
	Next, we prove high-probability exactness guarantees for random Wigner and spiked Wishart models. Finally, we apply our SDP to give theoretical bound of restricted isometry constant.
	
	\subsection{Relaxation gap}
	
	The following theorem provides a worst-case approximation ratio for the case of PSD objectives.
	
	\begin{thm}\label{thm:opt_ratio_spca}
		Suppose $\Sigma \succeq 0$.
		Let $v^*$ be the optimal value of \eqref{sparse_pca}, and $\bar v^*$ be the optimal value of \eqref{sdp_spca}.
		Then
		\[
		\frac{\bar v^*}{v^*} \leq  \min\{k,n/k,r\}
		\]
		where $r = \rank(\Sigma)$.
	\end{thm}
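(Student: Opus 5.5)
We prove the three bounds $\bar v^*\le k v^*$, $\bar v^*\le (n/k)v^*$ and $\bar v^*\le r v^*$ separately and take the minimum. Throughout, $X$ is feasible for \eqref{sdp_spca}, so $\tr X=1$ and $k\diag X\succeq X\succeq 0$. Since $\Sigma\succeq 0$, every value involved is nonnegative. We will repeatedly use that $v^*=\max_{|S|\le k}\lambda_{\max}(\Sigma_S)$, which also gives $v^*\ge \max_i \Sigma_{ii}$ when $k\ge1$.

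\emph{Bound $k$.} Since $\Sigma\succeq 0$ and $X\preceq k\diag X$, taking the trace inner product gives
\[
\Sigma\bullet X\le k\,\Sigma\bullet\diag X=k\sum_i \Sigma_{ii}X_{ii}\le k\max_i\Sigma_{ii}\,\tr X\le k v^*.
\]

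\emph{Bound $r$.} Write $\Sigma=\sum_{j=1}^r \mu_j u_ju_j^T$ and factor $X=\sum_l y_ly_l^T$. Then $\Sigma\bullet X=\sum_j\mu_j u_j^TXu_j$. Applying $X\preceq k\diag X$ at each $u_j$ gives $u_j^TXu_j\le k\sum_i (u_j)_i^2X_{ii}$, but summing this directly is too weak. Instead, we write $\Sigma=BB^T$ with $B\in\RR^{n\times r}$, so that $\Sigma\bullet X=\tr(B^TXB)\le r\,\lambda_{\max}(B^TXB)=r\,\lambda_{\max}(X^{1/2}\Sigma X^{1/2})$. It then suffices to show $\lambda_{\max}(X^{1/2}\Sigma X^{1/2})\le v^*$, that is, $w^TX^{1/2}\Sigma X^{1/2}w\le v^*$ for unit $w$. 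We would set $y=X^{1/2}w$ and need $y^T\Sigma y\le v^*$. This is the same as the original problem for the rank-one matrix $yy^T$, but $yy^T$ need not lie in $\Szero{n}{k}$, so this step is the main obstacle. The alternative we would try first is to avoid this route: use the rank-$r$ structure, so that $\Sigma\bullet X\le \sum_j \mu_j u_j^TXu_j$. Then bound each term by $u_j^TXu_j\le \|X\|\le v^*$-type quantities via the fact that $\lambda_{\max}(X)\le\max_S$-type bounds hold for $X\in\Szero{n}{k}$ with trace one. Concretely, we would try to show that $\Sigma\bullet X\le \tr(\Sigma)\cdot\max(\ldots)$ is not what we want, and instead $\Sigma\bullet X\le r\,\max_j \lambda_{\max}$ on $k$-supported vectors. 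If this direct approach fails, we would fall back on the dual \eqref{sdp_spca_dual}: construct $Z$ of rank at most $r$ built from the $u_j$ with $\rho=r v^*$. This is done by choosing $Z=\sum_j \mu_j z_jz_j^T$ with $z_j$ chosen so that $k\diag(z_jz_j^T)-z_jz_j^T+u_ju_j^T\preceq$ a multiple of $I$ controlled by the $k$-sparse eigenvalue.

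\emph{Bound $n/k$.} We use the dual \eqref{sdp_spca_dual} with $Z=t\Sigma$. The dual slack becomes $(1-t)\Sigma+tk\diag\Sigma$. Taking $t=1$ gives $\rho=k\max_i\Sigma_{ii}$, which only recovers the $k$ bound. For $n/k$ we instead argue on the primal by averaging over supports: for any $Y\succeq0$ with $\tr Y=1$, $\Sigma\bullet Y\le \lambda_{\max}(\Sigma)$. We would then show $\lambda_{\max}(\Sigma)\le (n/k)v^*$ by averaging $\lambda_{\max}(\Sigma_S)\ge u^T_S\Sigma_S u_S/\|u_S\|^2$ over all $|S|=k$ for the top eigenvector $u$. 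Since $\mathbb{E}_S\|u_S\|^2=k/n$ and, by PSD-ness, $u_S^T\Sigma_Su_S\ge$ the appropriate average, this gives $v^*\ge (k/n)\lambda_{\max}(\Sigma)$. The remaining work is checking the averaging inequality $\mathbb{E}_S\, u_S^T\Sigma_S u_S\ge \tfrac{k(k-1)}{n(n-1)}\lambda_{\max}$ plus diagonal terms, which yields at least $(k/n)^2$-type factors. We would refine it using $v^*\ge\max_i\Sigma_{ii}$ to obtain exactly $k/n$.
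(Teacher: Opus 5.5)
Your bound by $k$ is correct; it is the primal form of the paper's certificate $Z=\Sigma$, $\rho=k\max_i\Sigma_{ii}$. The genuine gap is the bound by $r$. None of your three routes is completed, and all three stall on the same missing ingredient, the paper's \Cref{lem:rank_one_case}: the relaxation is exact for rank-one objectives. That is, for every $a\in\RR^n$ and every feasible $X$ one has $a^TXa\le\|a\|_{(2,k)}^2$ (the sum of the $k$ largest $a_i^2$), and the paper supplies a dual certificate $Z\succeq 0$ with $\|a\|_{(2,k)}^2 I\succeq k\diag Z-Z+aa^T$.

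Your fallback, ``choose $z_j$ so that $k\diag(z_jz_j^T)-z_jz_j^T+u_ju_j^T\preceq$ a multiple of $I$,'' simply presupposes this lemma. It cannot come from the crude estimates you list. Bounding $u_j^TXu_j$ by $\|X\|$ already loses a factor $n/k$ for the all-ones matrix $J$ (where $r=1$). Using $X\preceq k\diag X$ alone gives only $k\max_i(u_j)_i^2$, which can exceed $\|u_j\|_{(2,k)}^2$ by a factor $k$. The paper proves the lemma with an explicit certificate $Z=\bar u\bar u^T$, where $\bar u_i\propto\tau/\sigma_i$ on the top-$k$ coordinates and $\propto\sigma_i/\tau$ elsewhere, verified by a Schur complement. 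It then sums these certificates over $\Sigma=\sum_j\mu_ju_ju_j^T$ and uses subadditivity of $\lambda_{\max}$ together with $\mu_j\|u_j\|_{(2,k)}^2\le v^*$, which holds since $\mu_ju_ju_j^T\preceq\Sigma$.

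Once you have the lemma, your first route closes in the primal, with no dual construction:
\[
\lambda_{\max}(B^TXB)=\max_{\|z\|=1}(Bz)^TX(Bz)\le\max_{\|z\|=1}\|Bz\|_{(2,k)}^2\le v^*,
\]
because $\|Bz\|_{(2,k)}^2=\max\{(x^TBz)^2:\|x\|=1,\ \|x\|_0\le k\}$ and $(x^TBz)^2\le x^T\Sigma x$. The obstacle you hit came from testing $\Sigma$ against $y=X^{1/2}w$, instead of testing $X$ against the rank-one objective $aa^T$ with $a=Bz$.

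For the bound by $n/k$, reducing to $\lambda_{\max}(\Sigma)\le (n/k)v^*$ matches the paper's certificate $Z=0$; the paper takes this inequality for granted. Your averaging does not yet prove it. Carried out exactly, with $u$ the top unit eigenvector, it gives
\[
v^*\ge\tfrac{k-1}{n-1}\lambda_{\max}(\Sigma)+\tfrac{n-k}{n-1}\textstyle\sum_i\Sigma_{ii}u_i^2 .
\]
The refinement you propose, $v^*\ge\max_i\Sigma_{ii}$, does not bound the weighted diagonal term from below. Dropping that term leaves $\frac{k-1}{n-1}$, and $\max_i\Sigma_{ii}\ge\lambda_{\max}/n$ does not make up the difference: for $k=2$ and large $n$, both give about $\lambda_{\max}/n$ against the target $2\lambda_{\max}/n$.

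The missing observation is $\Sigma\succeq\lambda_{\max}uu^T$. This gives $\Sigma_{ii}\ge\lambda_{\max}u_i^2$, hence $\sum_i\Sigma_{ii}u_i^2\ge\lambda_{\max}\sum_iu_i^4\ge\lambda_{\max}/n$, which yields exactly $k/n$. A shorter route: write $\Sigma=BB^T$ with rows $b_i$, so that $\binom{n-1}{k-1}B^TB=\sum_{|S|=k}\sum_{i\in S}b_ib_i^T$. Subadditivity of $\lambda_{\max}$ then gives $\binom{n-1}{k-1}\lambda_{\max}(\Sigma)\le\binom{n}{k}v^*$.
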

	
	In order to prove the theorem, we first consider the special case $\Sigma = \sigma \sigma^T$.
	
	\begin{lem}\label{lem:rank_one_case}
		If $\Sigma =\sigma\sigma^T$ for some $\sigma\in\mathbb{R}^n$, then \eqref{sdp_spca} is exact .
	\end{lem}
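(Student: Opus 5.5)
The plan is to compute both optimal values explicitly and show they coincide; exactness then follows from the rank-one certificate in the lemma preceding \Cref{section:stability}.

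First, the nonconvex problem. For $\Sigma=\sigma\sigma^T$, problem \eqref{sparse_pca} maximizes $(\sigma^Tx)^2$ over unit vectors $x$ with $\|x\|_0\le k$. For a fixed support $S$ with $|S|\le k$, Cauchy--Schwarz gives $(\sigma^Tx)^2\le\|\sigma_S\|^2$, with equality at $x=\sigma_S/\|\sigma_S\|$. Hence
\[
v^*=\max_{|S|\le k}\|\sigma_S\|^2=\sum_{i=1}^k\sigma_{[i]}^2=:s_k,
\]
where $\sigma_{[1]}^2\ge\sigma_{[2]}^2\ge\cdots$ are the squared entries of $\sigma$ sorted in decreasing order.

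Second, the relaxation. Take any feasible $X$ of \eqref{sdp_spca}, so $\tr X=1$ and $k\diag X\succeq X\succeq 0$. Testing the condition $k\diag X\succeq X$ against the vector $\sigma$ gives
\[
\sigma^TX\sigma\le k\sum_i\sigma_i^2X_{ii}.
\]
This alone is not enough, since its right-hand side can be as large as $k\sigma_{[1]}^2$. We also need $X_{ii}\le 1/k$ for every $i$, which I expect to be the main step. Both bounds come from a $2\times 2$ principal minor argument. Write $d_i=X_{ii}$. Since $X\succeq0$, we have $|X_{ij}|\le\sqrt{d_id_j}$. Testing $k\diag X\succeq X$ on $y=e_i+t\,\mathrm{sign}(\cdot)e_j$ shows that $k\diag X-X$ is PSD.

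To get $d_i\le 1/k$ I will use a cleaner weighted test. For any $w$, PSD-ness of $k\diag X-X$ gives $(w^TXw)\le k\sum_i w_i^2d_i$. Choose $w_j=\sqrt{d_i/d_j}\,\mathrm{sign}(X_{ij})$ over some set. This is messy, so I would try instead to avoid needing $d_i\le1/k$ and directly bound $\sigma^TX\sigma$ by $s_k$. Write $\sigma^TX\sigma=\|X^{1/2}\sigma\|^2$ and use two bounds:
\[
\sigma^TX\sigma\le\lambda_{\max}(X)\|\sigma\|^2,\qquad \sigma^TX\sigma\le k\sum_i\sigma_i^2d_i,\qquad \sum_i d_i=1.
\]
The cleanest route I see is dual. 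Construct a feasible point of \eqref{sdp_spca_dual} with $\rho=s_k$, i.e.\ find $Z\succeq0$ with
\[
s_kI\succeq\sigma\sigma^T+k\diag Z-Z.
\]
Try $Z=\zeta\zeta^T$ with $\zeta_i=\sigma_i/t_i$. Then $\sigma\sigma^T-\zeta\zeta^T$ combined with $\diag(k\zeta_i^2)$ needs care. Mirroring the construction in the proof of \Cref{thm:sparse_local_stability}, I take $\zeta=\alpha x^{-1}-(k\alpha)^{-1}p$ built from $\bar x=\sigma_S/\|\sigma_S\|$ and $\bar Q=s_kI-\sigma\sigma^T$, where $p=\bar Q\bar x$ vanishes on $S$ and equals $-\sigma_{S^c}\|\sigma_S\|$ off $S$.

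Then I verify $\bar Q-(k\diag Z-Z)\succeq0$ by the same orthogonal decomposition along $\langle\bar x\rangle$. The cross terms vanish exactly as in that proof. On $\langle\bar x\rangle^\perp$ it remains to show
\[
v(\bar Q)\ge v(k\diag Z-Z).
\]
Here $v(\bar Q)\ge s_k-\|\sigma_{S^c}\|^2\cdot(\ldots)$ is not obviously positive, and this is the step I expect to be the main obstacle. The choice of $\alpha$ must exploit that $\min_{i\in S}\sigma_i^2\ge\max_{j\notin S}\sigma_j^2$. I would first try a direct explicit $Z$ supported on a diagonal-plus-rank-one pattern and check the $2\times2$ block structure (indices in $S$ versus out). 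If the explicit dual resists, the fallback is a primal argument: show any feasible $X$ achieving more than $s_k$ would violate $k\diag X\succeq X$ when tested with $y=\sigma_S$-weighted indicator vectors.

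Once $\val(Q)=s_k=\val(P)$ is established, every optimal solution has objective $s_k$. Uniqueness of rank one follows from complementary slackness against the constructed dual slack of corank one.
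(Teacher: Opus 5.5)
Your value $v^*=s_k$ is right, and the certificate you finally write down is exactly the paper's. With $\alpha=\tau/\sqrt{k}$, your $\zeta=\alpha\bar x^{-1}-(k\alpha)^{-1}p$ has entries $\sqrt{s_k/k}\,\tau/\sigma_i$ on $S$ and $\sqrt{s_k/k}\,\sigma_j/\tau$ off $S$; this is the paper's vector $\bar u(\tau)$. The identity $Q\bar x=0$ also carries over from the stability proof. But the entire content of the lemma is the inequality $Q:=s_kI-\sigma\sigma^T-(k\diag Z-Z)\succeq 0$, and you leave it open.

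Worse, the route you propose for it cannot close. For $y\perp\bar x$ one has $\sigma_S^Ty_S=0$, hence $v(\bar Q)=s_k-\|\sigma_{S^c}\|^2$. This is negative whenever the tail mass exceeds the top-$k$ mass (e.g.\ $\sigma=(2,2,1,\dots,1)$, $k=2$, $n>10$). Testing on $e_j$ with $j\notin S$ gives $v(-(k\diag Z-Z))\le-(k-1)\zeta_j^2\le 0$. So bounding the two pieces separately on $\langle\bar x\rangle^\perp$, as in \Cref{thm:sparse_local_stability}, yields nothing. It discards exactly the cancellation between $+\zeta\zeta^T$ and $-\sigma\sigma^T$ that makes $Q$ PSD. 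That split is designed for perturbing an instance with $\bar Z=0$ and $\nu_2(\bar Q)>0$, whereas $s_kI-\sigma\sigma^T$ has the negative eigenvalue $s_k-\|\sigma\|^2$ whenever $\|\sigma\|_0>k$. Separately, drop the side claim $X_{ii}\le 1/k$: it is false, since $X=e_1e_1^T$ is feasible.

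The missing idea is to check positivity jointly over the $S/S^c$ structure. The paper does this with a Schur complement on the $(S,S^c)$ blocks, inverts $s_kI_k-W_{11}$ (diagonal plus rank one) by Sherman--Morrison, and takes the limit $\tau\to|\sigma_k|$.

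From where you stopped, there is a shorter finish:
\begin{enumerate}
\item Take $k\alpha^2\in(\max_{j\notin S}\sigma_j^2,\min_{i\in S}\sigma_i^2)$.
\item Then $D:=s_kI-k\diag(\zeta\zeta^T)$ is diagonal with entries $s_k(1-k\alpha^2/\sigma_i^2)$ for $i\in S$ and $s_k(1-\sigma_j^2/(k\alpha^2))$ for $j\notin S$. Hence $D\succ0$, and $Q=A-\sigma\sigma^T$ with $A:=D+\zeta\zeta^T\succ0$.
\item Since $Q\bar x=0$, we get $0=\det Q=\det A\,(1-\sigma^TA^{-1}\sigma)$, so $\sigma^TA^{-1}\sigma=1$.
\item This is equivalent to $Q\succeq0$. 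It also gives $\corank Q=1$, which your final uniqueness sentence needs but did not justify.
\end{enumerate}

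This requires the strict gap $\sigma_{[k]}^2>\sigma_{[k+1]}^2$; the case $\|\sigma\|_0<k$ is trivial with $Z=0$. Under a tie among nonzero entries, $\val(Q)=s_k$ still follows by continuity of both values in $\sigma$, but rank-one uniqueness can fail. For $\sigma=(1,1,1)$ and $k=2$, averaging the optimal rank-one solutions supported on $\{1,2\}$ and $\{1,3\}$ gives an optimal solution of rank two.
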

	\begin{proof}
		See \Cref{append:lem_rank_one_case}.
	\end{proof}
	
	We proceed to prove our optimality ratio.
	
	\begin{proof}[Proof of \Cref{thm:opt_ratio_spca}]
		The pair $Z_0:=\Sigma$, $\rho_0 := k \max\limits_{1\leq i\leq n} \Sigma_{ii}$ is feasible for the dual problem \eqref{sdp_spca_dual}, so
		\[
		\bar v^* \leq \rho_0 \leq k v^*.
		\]
		Similarly, $Z_1:=0$, $\rho_1:= \lambda_{\max}(\Sigma)$ is also dual feasible, so
		\[
		\bar v^* \leq \rho_1 \leq (n/k)v^*.
		\]
		Hence, the optimality ratio is bounded by $\min\{k,n/k\}$.
		It remains to bound it with the rank of $\Sigma$. 
		
		Consider the spectral decomposition of $\Sigma$, that is, $\Sigma = \sum_{i=1}^r \sigma^{(i)} (\sigma^{(i)})^T$ where $r = \rank(\Sigma)$. Let $\Sigma^{(i)}:=\sigma^{(i)}(\sigma^{(i)})^T$, and denote $v^*_i$ as the optimal value of \eqref{sparse_pca} with respect to objective matrix $\Sigma^{(i)}$. From \Cref{lem:rank_one_case}, for each $\Sigma^{(i)}$ there exists dual variables $Z^{(i)} = u^{(i)}(u^{(i)})^T$ and $\rho^{(i)}$ such that $\rho^{(i)} = v^*_i$. 
		Consider the pair $\tilde Z = \sum_{i=1}^r Z^{(i)}$, $\tilde\rho = \big\|k\diag(\tilde Z) - \tilde Z  +\Sigma\big\|$.
		Then
		\begin{equation}
			\begin{split}
				\bar v^*\leq
                \tilde\rho = \big\|k\diag(\tilde Z) - \tilde Z  +\Sigma\big\|
                \leq & \sum_{i=1}^r \big\|k\diag(Z^{(i)}) - Z^{(i)}  +\Sigma^{(i)}\big\|\\
				= &\sum_{i=1}^r \rho^{(i)}=\sum_{i=1}^r v^*_i \leq rv^*
			\end{split}
		\end{equation}
		Hence, the optimality ratio is bounded by $\rank \Sigma$, completing the proof.
	\end{proof}
	
	The next result applies to arbitrary~\(\Sigma\),
	even if \( \Sigma \not\succeq 0 \).
	
	\begin{cor}\label{cor:lower_bound_spca}
		Let $v^*$ and $\bar v^*$ be the optimal values of \eqref{sparse_pca} and \eqref{sdp_spca}, respectively.
		Then
		\[
		\bar v^* \leq q v^* - (q-1) \lambda_{\min}(\Sigma)
		\]
		where $q = \min\{k, n/k, r\}$ and $r = n - \mathrm{mult}(\lambda_{\min}(\Sigma))$ is the co-multiplicity of the smallest eigenvalue of~$\Sigma$.
	\end{cor}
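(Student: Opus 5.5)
The idea is to shift $\Sigma$ so that it becomes PSD and then apply \Cref{thm:opt_ratio_spca}. Let $\mu := \lambda_{\min}(\Sigma)$ and set $\Sigma' := \Sigma - \mu I_n \succeq 0$.

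\textbf{Step 1 (the shift changes both values by $\mu$).} Both problems fix the trace. Every feasible $x$ of \eqref{sparse_pca} has $\|x\|_2^2=1$, and every feasible $X$ of \eqref{sdp_spca} has $\tr X = 1$. Hence $x^T\Sigma' x = x^T \Sigma x - \mu$ and $\Sigma'\bullet X = \Sigma\bullet X - \mu$. Writing $v'^*$ and $\bar v'^*$ for the optimal values with objective $\Sigma'$, we get
\[
v'^* = v^* - \mu, \qquad \bar v'^* = \bar v^* - \mu .
\]

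\textbf{Step 2 (the rank of the shifted matrix).} The eigenvalues of $\Sigma'$ are $\lambda_i(\Sigma)-\mu$. These vanish exactly on the eigenspace of $\lambda_{\min}(\Sigma)$, so
\[
\rank(\Sigma') = n - \mathrm{mult}(\lambda_{\min}(\Sigma)) = r .
\]

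\textbf{Step 3 (apply the PSD bound).} \Cref{thm:opt_ratio_spca} applied to $\Sigma'\succeq 0$ gives $\bar v'^* \le q\, v'^*$ with $q=\min\{k,n/k,r\}$. One caveat: if $v'^*=0$, the ratio in the theorem is not defined. I would therefore use the inequality form that its proof actually establishes, which avoids the ratio. In that degenerate case $\Sigma'=0$ and both values are $0$, so the inequality holds trivially. Substituting Step 1 gives
\[
\bar v^* - \mu \le q (v^* - \mu),
\]
which rearranges to $\bar v^* \le q v^* - (q-1)\lambda_{\min}(\Sigma)$.

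\textbf{Main obstacle.} The only delicate point is confirming that the proof of \Cref{thm:opt_ratio_spca} really yields the inequality $\bar v^*\le q v^*$ directly, rather than only a statement about the ratio. It does: each of its three bounds is derived as $\bar v^* \le (\cdot)\, v^*$. Everything else is a trivial trace-normalization argument.
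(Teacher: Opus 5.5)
Your proposal is correct and follows the paper's own proof. The paper likewise shifts to $\Sigma-\lambda_{\min}(\Sigma)I_n\succeq 0$, notes that this matrix has rank $r$, applies \Cref{thm:opt_ratio_spca} to get $\bar v^*-\lambda_{\min}(\Sigma)\le q\,(v^*-\lambda_{\min}(\Sigma))$, and rearranges. Your trace-normalization check for both problems and your treatment of the degenerate case $v'^*=0$ supply details the paper leaves implicit.
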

	\begin{proof}
		Let \(\tau = \lambda_{\min}(\Sigma)\) and \(\tilde \Sigma:=\Sigma - \tau I_n\).
		Note that \(\tilde\Sigma\succeq 0\), $\rank \tilde \Sigma = r$.
		Applying \Cref{thm:opt_ratio_spca} to $\tilde\Sigma$, we have that \(\bar v^* - \tau \leq q(v^* - \tau)\) which is equivalent to \(\bar v^*\leq qv^* - (q-1)\tau \). 
	\end{proof}
	
	\subsection{Exact recovery} 
	We explore conditions under which \eqref{sdp_spca} is exact. Consider a projection \(\pi_{L^\perp}:\SS^n \mapsto L^\perp\;\), where \(
	L^\perp\) is the complement of \(L:= \{ t I : t \in \RR\}\). We begin with the following theorem.
	
	\begin{thm}\label{thm:spca_exact_case}
		Given \(\Sigma \in \SS^n\), if there exists \(\Sigma_0\in\SS^n\) such that \eqref{sdp_spca} is exact with respect to \(\Sigma_0\) and \(\pi_{L^\perp}(\Sigma) = \pi_{L^\perp}(\Sigma_0)\), then  \eqref{sdp_spca} is exact with respect to \(\Sigma\). 
	\end{thm}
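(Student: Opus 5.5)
The condition $\pi_{L^\perp}(\Sigma) = \pi_{L^\perp}(\Sigma_0)$ says exactly that $\Sigma - \Sigma_0 \in L$, i.e.\ $\Sigma = \Sigma_0 + tI_n$ for some $t \in \RR$. Since $L$ is the line spanned by $I_n$, the orthogonal projection onto $L^\perp$ has kernel $L$. The plan is therefore to show that adding a multiple of the identity to the objective does not change the structure of either \eqref{sdp_spca} or \eqref{sparse_pca}. It only shifts both optimal values by the same constant $t$, and the optimal sets are unchanged.

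First, the primal. Every feasible $X$ of \eqref{sdp_spca} satisfies $\tr X = 1$, so
\[
\Sigma \bullet X = \Sigma_0 \bullet X + t\,\tr X = \Sigma_0 \bullet X + t .
\]
The feasible set $\{X \in \Szero{n}{k} : \tr X = 1\}$ does not depend on the objective. Hence the sets of maximizers of \eqref{sdp_spca} for $\Sigma$ and for $\Sigma_0$ coincide, and $\val$ shifts by $t$. In the same way, every feasible $x$ of \eqref{sparse_pca} has $\|x\|_2^2 = 1$, so $x^T\Sigma x = x^T\Sigma_0 x + t$. The maximizers of \eqref{sparse_pca} also coincide, with the value shifted by $t$.

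Next, exactness transfers. By hypothesis, every optimal solution of \eqref{sdp_spca} for $\Sigma_0$ has the form $xx^T$. Since the optimal sets for $\Sigma$ and $\Sigma_0$ are identical, every optimal solution for $\Sigma$ is also rank one. By \Cref{lem:cardinality}, $\|x\|_0 \le k$, and $\|x\|^2 = \tr(xx^T) = 1$. The exactness lemma of the previous section then gives that $x$ is optimal for \eqref{sparse_pca} with objective $\Sigma$. Equivalently, $\val$ of \eqref{sdp_spca} equals $\val$ of \eqref{sparse_pca} for $\Sigma_0$, and both sides shift by $t$.

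As an optional cross-check on the dual side, suppose $(\rho, Z)$ is feasible for \eqref{sdp_spca_dual} with $\Sigma_0$. Then $(\rho + t, Z)$ is feasible for $\Sigma$, because
\[
(\rho + t)I_n - \bigl(k\diag Z - Z + \Sigma\bigr) = \rho I_n - \bigl(k\diag Z - Z + \Sigma_0\bigr).
\]
So dual certificates transfer as well. There is no real obstacle in this argument. The only point needing care is the first step: reading the projection condition correctly as $\Sigma - \Sigma_0 \in \RR I_n$, which holds because $\ker \pi_{L^\perp} = L$.
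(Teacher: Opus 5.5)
Your proof is correct and follows the paper's route: write \(\Sigma=\Sigma_0+tI_n\) and observe that the optimal set of \eqref{sdp_spca} is unchanged. Your use of \(\tr X=1\) to show the objective shifts by the constant \(t\) on a fixed feasible set supplies the justification that the paper's appeal to eigenvectors of the principal submatrices \(\Sigma_{SS}\) leaves implicit.
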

	\begin{proof}
		Since we have \(\pi_{L^\perp}(\Sigma) = \pi_{L^\perp}(\Sigma_0)\), there exists \(t\in \RR^n\) such that \(\Sigma = \Sigma_0 + tI_n\). For any \(S \subseteq [n]\) with \(|S| = k\), let \(\Sigma_{SS}\) and \((\Sigma_0)_{SS}\) be $k\times k$ principal minors of \(\Sigma\) and \(\Sigma_0\). Note that the eigenvectors of  \(\Sigma_{SS}\) are the same as \((\Sigma_0)_{SS}\). Since \eqref{sdp_spca} is exact for \(\Sigma_0\), the optimal solution of \eqref{sdp_spca} for \(\Sigma\) remains the same, which implies that \eqref{sdp_spca} is exact with respect to \(\Sigma\).
	\end{proof}
	
	Based on \Cref{thm:general_exact_region}, the following theorem establishes an explicit bound which implies the exact region of our SDP.  
	
	\begin{thm}\label{thm:stable_sdp}
		Let $\bar \Sigma = \beta \bar x \bar x^T+tI_n,\beta\geq0$, where $\|\bar x\|_0=k\geq 2$, and $\|\bar x\|=1$. Suppose that $\Sigma$ lies in a small neighborhood of $\bar \Sigma$. Let $c = \min\{| \bar x|_i: i \in \supp(\tilde x)\}$. If the following assumption on the objective matrix $\Sigma$ is satisfied
		\begin{equation}\label{eq:spca_threshold}
			\|\pi_{L^\perp}(\Sigma - \bar \Sigma)\| < \nu\beta  
		\end{equation}
		where \(\nu < \min\big\{ \tfrac{1}{2}(\frac{3}{2} + \frac{3 + 4\sqrt{2}}{c})^{-1},\tfrac{c^2}{4}\big\}\), then \eqref{sdp_spca} is exact. 
	\end{thm}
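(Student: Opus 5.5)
We will deduce the statement from \Cref{thm:general_exact_region} after two reductions. First, by \Cref{thm:spca_exact_case}, adding a multiple of the identity does not change exactness. So we may drop $tI_n$ from $\bar\Sigma$ and replace $\Sigma$ by $\bar\Sigma+\pi_{L^\perp}(\Sigma-\bar\Sigma)$. Second, \eqref{sdp_spca} is a maximization; we rewrite it as \eqref{sparse_sdp} with $C=-\Sigma$, $m=1$, $A_1=I_n$, $b_1=1$. Then $\bar C=-\beta\bar x\bar x^T$ and $\Delta C=-\pi_{L^\perp}(\Sigma-\bar\Sigma)$, so $\|\Delta C\|<\nu\beta$.

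Next we compute the unperturbed certificate. The optimal $\bar x$ is the given $k$-sparse unit vector, with multiplier $\bar\lambda=-\beta$, and we take $\bar Z=0$. Then $\bar Q=\bar C-\bar\lambda I=\beta(I-\bar x\bar x^T)$. This matrix is PSD with corank one, and $\nu_2(\bar Q)=\beta$, $\|\bar Q\|=\beta$.

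For the constraint $g(x)=\|x\|^2-1$ restricted to $S=\supp(\bar x)$, the Jacobian is $2\bar x_S^T$. Hence $s=1$ and $\sigma_s=2$, and ACQ holds since the sphere is smooth. Also $\|\cA\|=\|I\|=1$ and $\|\bar x\|=1$. So the first factor in \eqref{eq:exact_region} equals $1+\tfrac12=\tfrac32$, which explains the $\tfrac32$ in the threshold for $\nu$.

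It remains to control the perturbed quantities $x$, $c_x$, $\|x-\bar x\|$ and $\eta$. Here $x$ is the top eigenvector of $\Sigma_S$, the restriction to $S$; \Cref{lem:primal_convergence} only shows that it converges to $\bar x$. Quantitatively, we will use Davis--Kahan: the eigengap of $\bar\Sigma_S$ is $\beta$, so after fixing the sign, $\|x-\bar x\|\le \sqrt2\,\|\Delta C\|/(\beta-\|\Delta C\|)\le 2\sqrt2\,\nu$ once $\nu<\tfrac12$. The condition $\nu<c^2/4$ then forces $\|x-\bar x\|_\infty<c/2$, so every entry of $x$ on $S$ stays nonzero. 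This gives $\supp(x)=S$ and $c_x\ge c/2$; this is the role of the $c^2/4$ constraint.

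For $\eta$, take $z_2\perp\bar x$. Then $\langle x,z_2\rangle^2\le\|x-\bar x\|^2$ is small and $\langle x,\bar x\rangle^2\ge 1-O(\nu^2)$, so $\eta\le 2$. This accounts for the outer factor $\tfrac12$.

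Substituting into the right-hand side of \eqref{eq:exact_region} gives roughly
\begin{equation*}
2\Big(\tfrac32\big(1+\tfrac{2}{c}\big)\nu\beta+\tfrac{2}{c}\,\beta\cdot 2\sqrt2\,\nu\Big)
=2\nu\beta\Big(\tfrac32+\tfrac{3+4\sqrt2}{c}\Big),
\end{equation*}
which is less than $\beta=\nu_2(\bar Q)$ by the first bound on $\nu$. \Cref{thm:general_exact_region} then gives exactness.

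The main obstacle is making the perturbation bounds for $x$ and $\eta$ fully explicit. Their constants must land exactly on the stated expression $\tfrac32+\tfrac{3+4\sqrt2}{c}$, and we must check that the approximations $\eta\le2$ and $c_x\ge c/2$ hold under the stated range of $\nu$ rather than only asymptotically. If the constants do not match, I would first try adjusting the Davis--Kahan form (for example, using the $\sin\theta$ bound with gap $\beta-2\|\Delta C\|$).
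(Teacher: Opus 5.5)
Your proposal is correct and follows the paper's proof essentially step for step. Both use the same unperturbed certificate data ($\sigma_s=2$, $\|\cA\|=1$, $\bar Z=0$, $\nu_2(\bar Q)=\|\bar Q\|=\beta$), Davis--Kahan giving $\|x-\bar x\|\le 2\sqrt2\,\nu$, the condition $\nu<c^2/4$ giving $c_x\ge c/2$ and $\eta\le 2$, and the same substitution into \eqref{eq:exact_region}, which indeed lands exactly on $\tfrac12\big(\tfrac32+\tfrac{3+4\sqrt2}{c}\big)^{-1}$. The differences are minor: the paper absorbs $\pi_L(\Delta\Sigma)$ into $\bar\rho$ rather than invoking \Cref{thm:spca_exact_case}, and it also checks that the global optimum of \eqref{sparse_pca} keeps support $S$ by comparing $\beta-\|\pi_{L^\perp}(\Delta\Sigma)\|$ with $\beta(1-c^2)+\|\pi_{L^\perp}(\Delta\Sigma)\|$ over competing supports, whereas you take $x$ directly as the top eigenvector of $\Sigma_S$, which is fine because the certificate built in the proof of \Cref{thm:general_exact_region} certifies global optimality anyway.
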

	
	\begin{proof}
		We start with certifying the assumption in our stability result.  
		For \eqref{sparse_pca}, the only constraint is $g(x) = \|x\|^2-1 = 0$. ACQ holds at $x = \bar x$, because of $\rank(\nabla g(\bar x))  = \codim_{\bar x} (\vX_{\supp(\bar x)}) = 1$.Let \(\Delta \Sigma = \Sigma - \bar\Sigma \).  Decompose \(\Delta \Sigma =  \pi_{L}(\Delta\Sigma)+  \pi_{L^\perp}(\Delta\Sigma) \), and suppose that \( \pi_{L}(\Delta\Sigma) = t'I_n\). For \eqref{sdp_spca_dual}, let the dual multiplier $\bar \rho = \beta+t+t'$ and $\bar Z=0$. Then for objective matrix $\bar \Sigma + \pi_{L}(\Delta\Sigma)$, the primal solution $\bar X = \bar x\bar x^T$ and dual matrix $\bar Q:= \bar \rho I_n - (\bar\Sigma + \pi_{L}(\Delta\Sigma))$ satisfy 
		\[
		\bar X \bullet \bar Q = 0 \quad \text{and}\quad \corank(\bar Q)=1
		\]
		Let \(x^*\) be the optimal solution of \eqref{sparse_pca} with \(\Sigma\). We claim that if \(\|\pi_{L^\perp}(\Delta\Sigma)\|\leq \tfrac{c^2}{2}\beta\), then we have that \(\supp(x^*)\) remains the same as \(\supp(\bar x)\). Let \(S := \supp(\bar x)\), and let \(v^*_S\) be the objective value of \eqref{sparse_pca} on the support \(S\).  Notice that
		\[
		v^* + t + t'  \geq \beta - \|\pi_{L^\perp}(\Delta\Sigma)\|
		\]
		Also let \(S'\) be another support, that is, \(S' \neq S\), and let \(v^*_{S'}\) be the objective value of \eqref{sparse_pca} on the support \(S'\). We have
		\[
		v^*_{S'} + t  +t' \leq \beta(1-c^2) + \|\pi_{L^\perp}(\Delta\Sigma)\|
		\]
		Thus,  \(\|\pi_{L^\perp}(\Delta\Sigma)\|\leq \tfrac{c^2}{2}\beta\) implies that \(\supp(x^*)=\supp(\bar x)\). 
		
		From Davis–Kahan theorem (\cite{yu2015useful}), we derive  
		\[
		\|x^* - \bar x\| \leq \sqrt{2}\sin(\angle(x^*,\bar x))\leq\tfrac{2 \sqrt{2}}{\beta}\|\pi_{L^\perp}( \Sigma - \bar \Sigma)\| \leq 2\sqrt{2}\nu.
		\]
		Notice that if \(\nu\leq\tfrac{c^2}{4}\), then we have that
		\[
		\max_{i\in [n]} |x^*_i - \bar x_i| \leq \|x^* - \bar x\| \leq\frac{\sqrt{2}c^2}{2}\leq \frac{c}{2}
		\]
		Then let \(c^*:= \min\{|x^*_i|,i\in\supp(x^*)\}\), we have \(c^*\geq \tfrac{c}{2}\). Furthermore, if we have \(\nu\leq\tfrac{c^2}{4}\) which implies that 
		\(\|x^* - \bar x\|^2 \leq \tfrac{1}{8}\),
		that is, \(\langle\bar x, x^*\rangle \geq \tfrac{15}{16}\). Since we have that
		\[
		\langle z_2, x^*\rangle^2 +  \langle\bar x, x^*\rangle^2 \leq 1
		\]
		where \(z_2\) is the eigenvector corresponding to the second smallest eigenvalue of \(\bar Q\), it follows that \(|\langle z_2, x^*\rangle|\leq |\langle\bar x, x^*\rangle|\), and as defined in \eqref{eq:exact_region} \(\eta = \big(1 + \frac{\|\bar x\|^2\langle  x,z_2\rangle^2}{\langle x, \bar x\rangle^2}\big) \leq 2\).
		
		Since $\nabla g(\bar x) = 2\bar x$, we have $\sigma_s = 2$. Note that $\|\cA\| = 1$ where \(\cA(\lambda) = \lambda I_n\), and \(\|\bar Q\| = \lambda\). Then from all of the above discussion,  \eqref{eq:exact_region} is equivalent to
		\begin{equation}\label{eq:spca_1}
			\begin{split}
				\tfrac{1}{2}\beta >&\tfrac{3}{2}\|\pi_{L^\perp}(\Delta\Sigma)\| (1+\tfrac{1}{c^*}) + \tfrac{\beta}{c^*}\|x^*-\bar x\|
			\end{split}        
		\end{equation}
		Notice that if 
		\[
		\frac{1}{2}>\Big(\frac{3}{2} + \frac{3 + 4\sqrt{2}}{c}\Big)\nu
		\]
		then we have
		\begin{equation}\label{eq:spca_2}
			\begin{split}
				&\frac{3}{2}(1+\frac{1}{c^*}) \|\pi_{L^\perp}(\Delta\Sigma)\|+ \frac{\lambda}{c^*}\|x^*-\bar x\|\\
				\leq &  \frac{3}{2}(1 + \frac{2}{c})\|\pi_{L^\perp}(\Delta\Sigma)\|+ \frac{2\beta}{c}\|x^*-\bar x\|\\
				\leq & \frac{3}{2}(1 + \frac{2}{c})\nu\beta +  \frac{4\sqrt{2}}{c}\nu\beta \\
				= &\Big(\frac{3}{2} + \frac{3 + 4\sqrt{2}}{c}\Big)\nu\beta
			\end{split}
		\end{equation}
		which implies that \eqref{eq:spca_1} is satisfied. Thus, if \(\nu < \min\big\{ \tfrac{1}{2}(\frac{3}{2} + \frac{3 + 4\sqrt{2}}{c})^{-1},\tfrac{c^2}{4}\big\}\),
		then \eqref{sdp_spca} is exact.\qedhere
		
	\end{proof}
	

	\subsection{Wigner and Wishart spiked models}\label{subsection:spiked_model}
	
	We now consider the sparse PCA problem under two statistical models.
	In both cases, the goal is to recover an unknown sparse signal $x\in\mathbb{\RR}^{n}$, $\|x\|_0\leq k$, called the spike, given a random estimator $\Sigma$ of $x x^T$.
	The random matrix $\Sigma$ is as follows:
	\begin{itemize}
		\item Spiked Wigner: $\Sigma = \beta xx^T + \dfrac{1}{\sqrt{n}}W$, where $W$ is a random symmetric matrix from the Gaussian orthogonal ensemble (GOE), i.e.,
		it has i.i.d.\ $\mathcal{N}(0,2)$ diagonal entries,
		and i.i.d.\ $\mathcal{N}(0,1)$ off-diagonal entries.
		\item Spiked Wishart: $\Sigma = \frac{1}{N}XX^T$, where $X$ is an $n\times N$ matrix whose columns are i.i.d.\ $\mathcal{N}(0, I_n +  \beta xx^T)$.
	\end{itemize}
	
	We apply our results to the spiked Wigner model.
	
	\begin{lem}[\cite{vershynin2018high}]\label{lem:GOE}
		If $W$ is drawn from $\mathrm{GOE}(n)$, then
		\[
		\mathbb{E}\|W\| \leq 2\sqrt{n}
		\]
		Furthermore, 
		\[
		\mathbb{P}\big\{\tfrac{1}{\sqrt{n}}\|W\|\geq 2 + t\big\} \leq 2\exp(-cnt^2).
		\]
		where $c$ is a constant.
	\end{lem}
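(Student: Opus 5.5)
This lemma is a standard fact cited from \cite{vershynin2018high}, so the plan is to reproduce the two textbook ingredients: a bound on the expectation of $\|W\|$, followed by Gaussian concentration of the Lipschitz function $W\mapsto\|W\|$.

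For the expectation bound, write $W = (G+G^T)/\sqrt{2}$ with $G$ an $n\times n$ matrix of i.i.d.\ $\mathcal N(0,1)$ entries. This representation reproduces exactly the GOE law: off-diagonal entries have variance $1$ and diagonal entries have variance $2$. Then $\|W\|=\max_{\|u\|=1}|u^TWu| = \sqrt{2}\max_{\|u\|=1}|u^TGu|$. We compare the Gaussian process $X_u = u^TWu$ on the sphere with a simpler linear process via the Sudakov--Fernique inequality. The relevant increment is
\[
\mathbb E(X_u-X_v)^2 = 2\|uu^T-vv^T\|_F^2 \le 4\|u-v\|^2 ,
\]
where the inequality uses $\|u\|=\|v\|=1$. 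Hence $\mathbb E\max_u X_u \le \mathbb E\max_u 2\langle g,u\rangle = 2\,\mathbb E\|g\|\le 2\sqrt n$ with $g\sim\mathcal N(0,I_n)$. The same bound holds for $\max_u(-X_u)$ by symmetry.

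Passing from these two one-sided bounds to $\mathbb E\|W\|$ itself (a maximum of both) is the one delicate point. The cleanest route is to apply Sudakov--Fernique directly to the process indexed by pairs, $Y_{u,v}=u^TWv$, and compare it with $\langle g,u\rangle+\langle h,v\rangle$. This route does carry the cross terms correctly, but it can cost a constant factor. The obstacle is therefore getting the sharp constant $2$. If a direct argument only yields $2\sqrt n+O(1)$, or $(2+o(1))\sqrt n$, that is still enough for the paper's later use after adjusting $t$. Alternatively, I would simply cite \cite{vershynin2018high} for the sharp constant.

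For the tail bound, note that $W\mapsto\|W\|$ is $1$-Lipschitz in Frobenius norm, so as a function of the independent standard Gaussian coordinates $(W_{ii}/\sqrt2,\,W_{ij})_{i<j}$ it is $\sqrt2$-Lipschitz. Gaussian concentration then gives
\[
\mathbb P\{\|W\|\ge \mathbb E\|W\|+s\}\le \exp(-s^2/4).
\]
Taking $s=t\sqrt n$ and using $\mathbb E\|W\|\le 2\sqrt n$ yields
\[
\mathbb P\{\tfrac1{\sqrt n}\|W\|\ge 2+t\}\le \exp(-nt^2/4)\le 2\exp(-cnt^2)
\]
with $c=1/4$.
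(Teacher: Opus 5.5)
The paper gives no argument of its own here; it simply cites \cite{vershynin2018high}. Measured against the statement, your proposal has one real gap. Your one-sided Sudakov--Fernique step is correct and gives $\mathbb{E}\lambda_{\max}(W)\le 2\sqrt n$ and $\mathbb{E}\lambda_{\max}(-W)\le 2\sqrt n$. However, you never pass from these to $\mathbb{E}\|W\|\le 2\sqrt n$, and the pair-indexed route you suggest does not do it either. For $Y_{u,v}=u^TWv$ and $M=uv^T-u'v'^T$ one has $\mathbb{E}(Y_{u,v}-Y_{u',v'})^2=\|M\|_F^2+\langle M,M^T\rangle$. This can equal $2(\|u-u'\|^2+\|v-v'\|^2)$: take $u=v=v'=e_1$ and $u'=-e_1$, where the increment is $\mathbb{E}(2W_{11})^2=8$. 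So the comparison process has to be $\sqrt2(\langle g,u\rangle+\langle h,v\rangle)$ rather than $\langle g,u\rangle+\langle h,v\rangle$, and you only get $\mathbb{E}\|W\|\le 2\sqrt{2n}$. As written, the first claim with constant exactly $2$ therefore rests entirely on the citation, just as in the paper.

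The second claim should not inherit this gap, but in your write-up it does. You concentrate $\|W\|$ around $\mathbb{E}\|W\|$ and then invoke the unproved bound $\mathbb{E}\|W\|\le 2\sqrt n$. Instead, concentrate $\lambda_{\max}(W)$ and $\lambda_{\max}(-W)$ separately. Each is $1$-Lipschitz in Frobenius norm by Weyl's inequality, hence $\sqrt2$-Lipschitz in the standard Gaussian coordinates $(W_{ii}/\sqrt2,W_{ij})_{i<j}$. Your one-sided expectation bounds then give $\mathbb{P}\{\lambda_{\max}(\pm W)\ge 2\sqrt n+s\}\le e^{-s^2/4}$. A union bound yields $\mathbb{P}\{\|W\|\ge 2\sqrt n+s\}\le 2e^{-s^2/4}$, which after setting $s=t\sqrt n$ is the stated tail bound with $c=1/4$. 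This is also where the factor $2$ in the statement comes from. Integrating this tail gives $\mathbb{E}\|W\|\le 2\sqrt n+2\sqrt\pi$. The paper only uses the tail bound (in the spiked Wigner theorem), so this self-contained version is all that is needed downstream. Only the exact constant in the expectation bound has to be taken from the reference.
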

	
	\begin{thm}
		For the spiked Wigner model, if \(\beta \geq \nu^{-1}(2 + t)\) our SDP \eqref{sdp_spca} is exact with probability at least  \(1 - 2e^{-cnt^2}\) with a constant \(c\).
	\end{thm}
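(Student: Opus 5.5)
We reduce the statement to the deterministic exactness criterion of \Cref{thm:stable_sdp}, applied with $\bar\Sigma = \beta xx^T$ (so $t=0$), where $x$ is the spike, normalized with $\|x\|=1$ and $\|x\|_0=k\geq 2$. The perturbation is $\Sigma-\bar\Sigma = \tfrac{1}{\sqrt n}W$. By \Cref{thm:stable_sdp}, the SDP \eqref{sdp_spca} is exact as soon as
\[
\|\pi_{L^\perp}(\tfrac{1}{\sqrt n}W)\| < \nu\beta,
\]
where $\nu$ is the constant from \Cref{thm:stable_sdp} that depends only on $c=\min_{i\in\supp(x)}|x_i|$. So it suffices to show that this inequality holds with probability at least $1-2e^{-cnt^2}$.

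The first step is to compare the projected norm with the full spectral norm. The projection onto $L^\perp$ removes the component $\tfrac{\tr W}{n}I_n$, so
\[
\pi_{L^\perp}(W) = W - \tfrac{\tr W}{n} I_n .
\]
Its spectral norm is controlled by that of $W$. The eigenvalues of $\pi_{L^\perp}(W)$ are $\lambda_i(W)-\bar\lambda$, where $\bar\lambda$ is the mean eigenvalue. Since $\bar\lambda\in[\lambda_{\min},\lambda_{\max}]$, each shifted eigenvalue satisfies $|\lambda_i-\bar\lambda|\le \lambda_{\max}-\lambda_{\min}\le 2\|W\|$. The sharper bound $\|\pi_{L^\perp}(W)\|\le\|W\|$ also holds, because the projection onto $L^\perp$ in the trace inner product is the closest-point map and we can pick the shift optimally.

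The main obstacle is this norm comparison. Exactly which bound holds matters, because the statement has no factor $2$. My first attempt is to use the invariance in \Cref{thm:spca_exact_case}: adding $sI_n$ to $\Sigma$ does not change exactness for any $s$. Hence we may replace $\Sigma$ by $\Sigma - sI$ with $s=(\lambda_{\max}(W)+\lambda_{\min}(W))/(2\sqrt n)$, which makes the perturbation norm at most $\tfrac{1}{\sqrt n}\|W\|$. If the argument has to pass through $\pi_{L^\perp}$ literally, we argue instead that $\|W-\tfrac{\tr W}{n}I\|\le\|W\|+|\tr W|/n$, and that $|\tr W|/n=O(1/\sqrt n)$ with overwhelming probability. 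This small excess is then absorbed by the strict inequality, after adjusting $t$ or the constant $c$.

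The probabilistic step is to invoke \Cref{lem:GOE}: with probability at least $1-2e^{-cnt^2}$ we have $\tfrac{1}{\sqrt n}\|W\| < 2+t$. On this event,
\[
\|\pi_{L^\perp}(\Sigma-\bar\Sigma)\| \le \tfrac{1}{\sqrt n}\|W\| < 2+t \le \nu\beta,
\]
where the last inequality uses the hypothesis $\beta\ge\nu^{-1}(2+t)$. \Cref{thm:stable_sdp} then gives exactness of \eqref{sdp_spca}, which completes the argument.
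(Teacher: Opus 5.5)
Your reduction is the paper's: its whole proof is the one sentence saying that \Cref{lem:GOE} verifies the hypothesis of \Cref{thm:stable_sdp}. You go further and notice the step the paper leaves implicit, namely passing from $\|W\|$ to $\|\pi_{L^\perp}(W)\|$. However, your final chain handles that step incorrectly.

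\emph{The ``sharper bound'' is false.} The claim $\|\pi_{L^\perp}(W)\|\le\|W\|$ does not hold. The map $\pi_{L^\perp}$ is the orthogonal projection for the trace inner product. It therefore subtracts the shift $\tr W/n$ (the mean eigenvalue), which minimizes $\|W-sI\|_F$. It does not subtract $(\lambda_{\max}+\lambda_{\min})/2$, which is the shift minimizing the spectral norm. For example, $W=\diag(1,\dots,1,-1)$ has $\|W\|=1$ but $\|\pi_{L^\perp}(W)\|=2-2/n$.

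\emph{Your first repair changes nothing.} Since $\pi_{L^\perp}(sI)=0$, the hypothesis of \Cref{thm:stable_sdp} is literally identical for $\Sigma$ and $\Sigma-sI$, so invoking \Cref{thm:spca_exact_case} buys nothing. It could only be salvaged by reopening the proof of \Cref{thm:stable_sdp}. There you would observe that the split $\Delta\Sigma=t'I_n+\pi_{L^\perp}(\Delta\Sigma)$ can be replaced by $\Delta\Sigma=sI_n+R$ for any $s$, because the identity part changes neither eigenvectors nor the comparison between supports. That yields the theorem with $\min_s\|\Delta\Sigma-sI_n\|\le\|\Delta\Sigma\|$ in place of $\|\pi_{L^\perp}(\Delta\Sigma)\|$.

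\emph{Your second repair is the right one.} If you cite \Cref{thm:stable_sdp} as stated, build the final display on this route and make it explicit.
\begin{enumerate}
\item The diagonal of $W$ is i.i.d.\ $\mathcal{N}(0,2)$, so $\tr W\sim\mathcal{N}(0,2n)$. Hence $\mathbb{P}\{|\tr W|/n^{3/2}\ge t/2\}\le 2e^{-n^2t^2/16}$.
\item Combine this with \Cref{lem:GOE} at level $t/2$ and take a union bound. With probability at least $1-4e^{-c_1nt^2}$, where $c_1=\min\{c/4,1/16\}$,
\[
\|\pi_{L^\perp}(\Sigma-\bar\Sigma)\|\le\tfrac{1}{\sqrt n}\|W\|+\tfrac{|\tr W|}{n^{3/2}}<2+t\le\nu\beta .
\]
\item Finally, $1-4e^{-c_1nt^2}\ge 1-2e^{-(c_1/2)nt^2}$ whenever the right-hand side is positive. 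When it is not positive, the probability claim is trivial. So the statement holds with constant $c_1/2$.
\end{enumerate}
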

	
	\begin{proof}
		Applying \Cref{lem:GOE}, it follows that the assumption in \Cref{thm:stable_sdp} is satisfied with probability at least  \(1 - 2e^{-cnt^2}\) with a constant \(c\).
	\end{proof}
	
	We next apply our results to the spiked Wishart model.
	
	\begin{lem}[\cite{koltchinskii2017normal}]\label{lem:wishart}
		Let \(\bar \Sigma = I_n + \beta xx^T\), let $x_i\sim \mathcal{N}(0,\bar \Sigma), i=1,\dots,N$ and $\Sigma = \frac{1}{N}\sum_{i=1}^Nx_ix_i^T$, then there exists a constant \(c\) such that 
		\begin{equation}
			\|\Sigma - \bar\Sigma\| \leq c\|\bar\Sigma\| \max\Big\{\sqrt{\tfrac{n+\beta}{N(1+\beta)}},\tfrac{n+\beta}{N(1+\beta)},\sqrt{\tfrac{t}{N}},\tfrac{t}{N}\Big\}
		\end{equation}
		with probability at least \(1 - e^{-t}\).
	\end{lem}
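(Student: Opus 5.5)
The bound is an instance of the Koltchinskii--Lounici concentration inequality for sample covariance operators of Gaussian vectors, so the plan is to reduce to that theorem and compute the one model-dependent quantity it involves. That theorem states: if $x_1,\dots,x_N$ are i.i.d.\ centered Gaussian with covariance $\bar\Sigma$, and $\mathbf r(\bar\Sigma):=\tr(\bar\Sigma)/\|\bar\Sigma\|$ is the effective rank, then there is an absolute constant $c$ such that, with probability at least $1-e^{-t}$,
\[
\|\Sigma-\bar\Sigma\|\le c\,\|\bar\Sigma\|\max\Big\{\sqrt{\tfrac{\mathbf r(\bar\Sigma)}{N}},\tfrac{\mathbf r(\bar\Sigma)}{N},\sqrt{\tfrac tN},\tfrac tN\Big\}.
\]
Its two ingredients are a bound on $\mathbb E\|\Sigma-\bar\Sigma\|$ of order $\|\bar\Sigma\|\max\{\sqrt{\mathbf r/N},\mathbf r/N\}$, obtained by generic chaining, and a Gaussian concentration step that adds the $t$-dependent terms. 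I would take both as given.

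\emph{Step 1: hypotheses.} The vectors $x_i\sim\mathcal N(0,I_n+\beta xx^T)$ are i.i.d.\ centered Gaussian, and $\Sigma=\frac1N\sum_i x_ix_i^T$ is exactly the sample covariance in that theorem, so it applies directly.

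\emph{Step 2: effective rank.} We take $\|x\|=1$, as in the spiked-model setup. The eigenvalues of $\bar\Sigma=I_n+\beta xx^T$ are $1+\beta$, with eigenvector $x$, and $1$, with multiplicity $n-1$. Since $\beta\ge 0$, this gives
\[
\|\bar\Sigma\|=1+\beta,\qquad \tr\bar\Sigma=n+\beta,\qquad \mathbf r(\bar\Sigma)=\frac{n+\beta}{1+\beta}.
\]

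\emph{Step 3: conclusion.} Substituting $\mathbf r(\bar\Sigma)/N=\frac{n+\beta}{N(1+\beta)}$ into the displayed inequality yields exactly the bound claimed in \Cref{lem:wishart}, with the same probability $1-e^{-t}$.

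If one wanted a self-contained argument instead of citing the theorem, the main obstacle is the expectation bound in terms of the effective rank rather than the dimension $n$. The naive whitening route, writing $\Sigma=\bar\Sigma^{1/2}W\bar\Sigma^{1/2}$ with $W$ a white Wishart matrix, only gives a rate in $n/N$. To recover $\mathbf r/N$, I would exploit the spike structure. Split each sample along $x$ and $x^\perp$: the component along $x$ is a scalar with variance $1+\beta$, and the component in $x^\perp$ is white noise. Then bound the resulting three blocks (scalar, cross term, and isotropic block) separately using standard Wishart estimates, and combine them with Gaussian concentration of the Lipschitz function $\|\Sigma-\bar\Sigma\|^{1/2}$ to obtain the $t$-dependent terms.
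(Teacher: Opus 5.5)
Your proposal is correct and follows the same route as the paper. The paper offers no argument beyond citing the Koltchinskii--Lounici effective-rank bound, and your computation $\tr\bar\Sigma/\|\bar\Sigma\|=(n+\beta)/(1+\beta)$, using $\|x\|=1$ and $\beta\ge 0$ from the spiked-model setup, is exactly the specialization the statement encodes.

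The optional self-contained sketch is not needed, and its concentration step is wrong as written. The quantity $\|\Sigma-\bar\Sigma\|^{1/2}$ is not Lipschitz in the data near $\Sigma=\bar\Sigma$; one would instead concentrate the Lipschitz quantity $\sup_{\|u\|=1}\bigl|\sqrt{u^T\Sigma u}-\sqrt{u^T\bar\Sigma u}\bigr|$. This does not affect your main argument.
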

	\begin{thm}\label{cor:wishart}
		For the spiked Wishart model, suppose that
		\[
		N \geq c(\beta+1)\max\{\tfrac{n+\beta}{1+\beta},t\} \max\{\tfrac{1}{\nu\beta},\tfrac{1}{\nu^2\beta^2}\}
		\]
		where \(c\) is a constant, then our SDP \eqref{sdp_spca} is exact with probability at least \(1 - e^{-t}\).
	\end{thm}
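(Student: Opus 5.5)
The plan is to reduce the statement to \Cref{thm:stable_sdp}, in the same way as the Wigner case. Take $\bar\Sigma = I_n + \beta x x^T$, with the spike normalized so that $\|x\|=1$. This has exactly the form $\beta\bar x\bar x^T + tI_n$ with $t=1$. So \eqref{sdp_spca} is exact as soon as
\[
\|\pi_{L^\perp}(\Sigma-\bar\Sigma)\| < \nu\beta .
\]
The projection $\pi_{L^\perp}$ removes the component along $I_n$, and it is an orthogonal projection in the trace inner product. Its effect on the spectral norm is controlled by a constant: subtracting $\tfrac{1}{n}\tr(\Delta)I_n$ changes the spectral norm by at most $|\tr\Delta|/n\le\|\Delta\|$. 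Hence $\|\pi_{L^\perp}(\Delta)\|\le 2\|\Delta\|$ for $\Delta=\Sigma-\bar\Sigma$. It therefore suffices to show that $\|\Sigma-\bar\Sigma\|<\nu\beta/2$ with probability at least $1-e^{-t}$, and the factor $2$ is absorbed into the constant $c$.

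Next I invoke \Cref{lem:wishart}, using $\|\bar\Sigma\| = 1+\beta$. With probability at least $1-e^{-t}$ we have
\[
\|\Sigma-\bar\Sigma\|\le c_0(1+\beta)\max\{\sqrt{a},a\},\qquad a:=\max\Big\{\tfrac{n+\beta}{N(1+\beta)},\tfrac{t}{N}\Big\}.
\]
This uses that the maximum of the four terms in the lemma equals $\max\{\sqrt a,a\}$, since $s\mapsto\max\{\sqrt s,s\}$ is monotone. Write $M:=\max\{\tfrac{n+\beta}{1+\beta},t\}$, so that $a=M/N$. We need $(1+\beta)\sqrt{M/N}\lesssim\nu\beta$ and $(1+\beta)M/N\lesssim\nu\beta$.

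The main step is to check that the hypothesis on $N$ gives both inequalities. The first inequality is equivalent to $N\gtrsim (1+\beta)^2M/(\nu\beta)^2$, and the second to $N\gtrsim(1+\beta)M/(\nu\beta)$. The hypothesis supplies $N\ge c(1+\beta)M\max\{1/(\nu\beta),1/(\nu\beta)^2\}$. This covers the linear term directly. For the square-root term there is an extra factor of $(1+\beta)$, and this is the obstacle. I expect it to be resolved in one of two ways. The first is to use the regime $\beta\lesssim 1$, where $1+\beta$ is a constant. The second is to apply \Cref{lem:wishart} in its sharper effective-rank form, where the relevant quantity is $\sqrt{(1+\beta)M/N}$ and $M$ is interpreted as the effective rank $\tr\bar\Sigma/\|\bar\Sigma\|=\frac{n+\beta}{1+\beta}$. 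In that form, $\|\Sigma-\bar\Sigma\|\lesssim\|\bar\Sigma\|\sqrt{M/N}$, and requiring $N\ge c(1+\beta)M/(\nu\beta)^2$ yields $\|\bar\Sigma\|\sqrt{M/N}\le \nu\beta\sqrt{(1+\beta)}/\sqrt{c}$. I would try to track the constants so that this matches the stated bound. If it does not, I would state the requirement with $(1+\beta)^2$ and note that it coincides with the stated one up to constants when $\beta=O(1)$.

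Once $\|\Sigma-\bar\Sigma\|<\nu\beta/2$ holds on the event of probability at least $1-e^{-t}$, \Cref{thm:stable_sdp} applies, since its neighborhood hypothesis is exactly \eqref{eq:spca_threshold}. It yields exactness of \eqref{sdp_spca} on that event, which completes the argument.
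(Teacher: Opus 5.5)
Your reduction is the paper's own route. Its proof is the single sentence that \Cref{lem:wishart} puts $\Sigma$ in the region \eqref{eq:spca_threshold} with probability at least $1-e^{-t}$, after which \Cref{thm:stable_sdp} gives exactness. Your extra bookkeeping is correct: the bound $\|\pi_{L^\perp}(\Delta)\|\le 2\|\Delta\|$, and collapsing the four terms of the lemma to $\max\{\sqrt a,a\}$. The obstacle you hit is genuine, and the paper never checks it. The square-root term needs $N\gtrsim(1+\beta)^2M/(\nu\beta)^2$, but the hypothesis supplies only $N\ge c(1+\beta)M\max\{\frac{1}{\nu\beta},\frac{1}{(\nu\beta)^2}\}$. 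It therefore falls short by the factor $\min\{1+\beta,\frac{1+\beta}{\nu\beta}\}$. For every $\beta\ge 1/\nu$ this factor is of order $1/\nu$, and $1/\nu>4k$ because $\nu<\tfrac14\min_{i\in\supp(\bar x)}\bar x_i^2\le\tfrac1{4k}$. So it is not a constant, and no choice of the absolute constant $c$ absorbs it. Your first fix ($\beta\le B$, with $c$ depending on $B$) is valid but proves only a restricted statement.

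Your second proposed fix does not exist. The ``effective-rank form'' is \Cref{lem:wishart} itself: its first term already is $\sqrt{r(\bar\Sigma)/N}$ with $r(\bar\Sigma)=\tr\bar\Sigma/\|\bar\Sigma\|=\frac{n+\beta}{1+\beta}$. That is why your computation reproduces the same $\sqrt{1+\beta}$ loss.

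No sharper concentration inequality can rescue this route either. Write the samples as $g_i+\sqrt\beta\,h_i x$ with independent $g_i\sim\mathcal N(0,I_n)$ and $h_i\sim\mathcal N(0,1)$. Then $\Sigma-\bar\Sigma$ contains the cross term $\sqrt\beta\,(xu^T+ux^T)$, where $u=\frac1N\sum_i h_ig_i$ and $\|u\|\approx\sqrt{n/N}$. Take $v$ to be the unit vector along $u-(x^Tu)x$, so $v\perp x$ and $v^TIx=0$. Then $v^T\pi_{L^\perp}(\Sigma-\bar\Sigma)x=v^T(\Sigma-\bar\Sigma)x$, which is of order $\sqrt{\beta n/N}$ once $\beta$ is large. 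Hence \eqref{eq:spca_threshold} itself forces $N\gtrsim n/(\nu^2\beta)$. By contrast, for $\nu\beta\ge1$ and $\beta$ small compared with $n/t$, the displayed hypothesis allows $N$ of order $n/(\nu\beta)$.

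So the step you left open cannot be closed through \Cref{thm:stable_sdp}, and the paper's one-line proof has exactly the same hole. What this argument actually proves is the theorem with $\max\{\frac{1}{\nu\beta},\frac{1+\beta}{\nu^2\beta^2}\}$ in place of $\max\{\frac{1}{\nu\beta},\frac{1}{\nu^2\beta^2}\}$; your own computation verifies this once $c$ is large compared with $c_0^2+c_0$. Alternatively, it proves the displayed version for $\beta\le B$. State one of these corrected versions outright instead of trying to track constants into the displayed bound.
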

	\begin{proof}
		Applying \Cref{lem:wishart}, it follows that the assumption in \Cref{thm:stable_sdp} is satisfied with probability at least  \(1 - e^{-t}\).
	\end{proof}
	
	\subsection{Application to Restricted Isometry Property}\label{subsection:spca_ric}
	A matrix $A\in\RR^{m\times n}$ satisfies the \emph{restricted isometry property} (RIP) of order $k$ with constants $\delta^-,\delta^+\geq0$ if 
	\begin{equation}\label{eq:rip}
		(1-\delta^-)\|x\|_2^2 \leq \|Ax\|^2_2 \leq (1+\delta^+)\|x\|_2^2
	\end{equation}
	for all $x\in\RR^n$ with $\|x\|_0\leq k$;
	see, e.g.,~\cite{candes2008restricted}.
	The best RIP constants $\delta^-_+,\delta^+_+$ satisfy:
	\begin{align*}
		1 - \delta^-_* &=
		\min\{ \|Ax\|^2_2 : \|x\|_2 = 1, \|x\|_0 \leq k \},
		\\
		1 + \delta^+_* &=
		\max\{ \|Ax\|^2_2 : \|x\|_2 = 1, \|x\|_0 \leq k \}
	\end{align*}
	We can use \eqref{sdp_spca} to estimate $\delta^-_*,\delta^+_*$.
	The following corollary provides guarantees on the quality of the SDP estimates.
	
	\begin{cor}
		Let \(1+\bar \delta^+\) be the optimal value of  \eqref{sdp_spca} with  \(\Sigma = A^TA\),
		and let \( - (1 - \bar \delta^-)\) be the optimal value of  \eqref{sdp_spca} with  \(\Sigma = -A^TA\).
		Let \(\delta^+_*, \delta^-_*\) be the best valid RIP constants.
		Then
		\begin{align*}
			&1+\delta^+_*
			\;\leq\;
			{1+\bar\delta^+}
			\;\leq\;
			\min\{q,r\} (1+\delta^+_*)
			\\
			&1-\delta^-_*
			\;\geq\;
			{1-\bar \delta^-}
			\;\geq\;
			q \big(1-\delta^-_* \big) - (q-1) \|A\|_{op}^2
		\end{align*}
		where \(q = \min\{k,n\}\), and \(r = \rank(A)\).
	\end{cor}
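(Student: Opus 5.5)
The plan is to read both chains of inequalities off \Cref{thm:opt_ratio_spca} and \Cref{cor:lower_bound_spca}, applied to $\Sigma=A^TA$ and to $\Sigma=-A^TA$. Note that the statement's $q=\min\{k,n\}$ is at least the ratio $\min\{k,n/k\}$ those results give. So I will prove the sharper bounds with $\min\{k,n/k,\rank\Sigma\}$ and then weaken to the stated form. By definition, $1+\delta^+_*$ is the optimal value $v^*$ of \eqref{sparse_pca} with $\Sigma=A^TA$. Likewise $-(1-\delta^-_*)$ is the optimal value of \eqref{sparse_pca} with $\Sigma=-A^TA$, since $\max\{-\|Ax\|^2\}=-\min\{\|Ax\|^2\}$ over the same feasible set.

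\emph{Upper chain.} The left inequality $1+\delta^+_*\le 1+\bar\delta^+$ is the relaxation property. Every feasible $x$ of \eqref{sparse_pca} gives $xx^T$ feasible for \eqref{sdp_spca} by \Cref{lem:cardinality}, with $\tr(xx^T)=1$. For the right inequality, $\Sigma=A^TA\succeq 0$ and $\rank\Sigma=\rank A=r$. \Cref{thm:opt_ratio_spca} then gives $1+\bar\delta^+\le \min\{k,n/k,r\}(1+\delta^+_*)$. Since $\min\{k,n/k\}\le k\le\min\{k,n\}=q$ (recall $k\le n$), we get $\min\{k,n/k,r\}\le\min\{q,r\}$. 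Because $1+\delta^+_*\ge0$, the stated bound follows.

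\emph{Lower chain.} The relaxation property again gives $-(1-\bar\delta^-)\ge-(1-\delta^-_*)$, which is $1-\delta^-_*\ge 1-\bar\delta^-$. For the other side, apply \Cref{cor:lower_bound_spca} to $\Sigma=-A^TA$. Here $\lambda_{\min}(\Sigma)=-\|A\|_{op}^2$, so with $q'=\min\{k,n/k,r'\}$,
\[
-(1-\bar\delta^-)\le -q'(1-\delta^-_*)+(q'-1)\|A\|_{op}^2 .
\]
Negating gives $1-\bar\delta^-\ge q'(1-\delta^-_*)-(q'-1)\|A\|_{op}^2$.

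The main obstacle is replacing $q'$ by the larger $q$, because the two parameters enter with opposite signs. The right-hand side equals $\|A\|_{op}^2-q'\big(\|A\|_{op}^2-(1-\delta^-_*)\big)$. We have $1-\delta^-_*=\min\|Ax\|^2\le\|A\|_{op}^2$, so the bracket is nonnegative. Hence the right-hand side is nonincreasing in $q'$, and replacing $q'$ by any $q\ge q'$ only weakens the bound. This yields the stated inequality.

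So the only care needed is checking the sign of $\|A\|_{op}^2-(1-\delta^-_*)$ and the comparison $q'\le k\le q$. The rest is direct substitution into the two earlier results.
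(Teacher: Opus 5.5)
Your proposal is correct and follows the paper's own route: the upper chain comes from \Cref{thm:opt_ratio_spca} applied to $A^TA$, and the lower chain from \Cref{cor:lower_bound_spca} applied to $-A^TA$ using $\lambda_{\min}(-A^TA)=-\|A\|_{op}^2$. Your extra step handles the passage from the ratio $\min\{k,n/k,\cdot\}$ to the stated $q=\min\{k,n\}$. In particular, you check that $\|A\|_{op}^2\ge 1-\delta^-_*$, so the lower bound is nonincreasing in that parameter; the paper's one-line proof leaves this implicit, and you handle it correctly.
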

	
	\begin{proof}
		The bound on $\delta^+_*$ follows from \Cref{thm:opt_ratio_spca}
		and the bound on $\delta^-_*$ follows from \cref{cor:lower_bound_spca}.
	\end{proof}
	
	\section{Sparse Ridge Regression}\label{section:ridge}
	
	For the specific case of \eqref{sparse_lr},
	our primal SDP becomes
	\begin{equation}\label{sdp_slr}
		\tag{\small{Q-sridge}}
		\min_{\X \in \mathbb{S}^{n+1}} \quad C_\alpha \bullet \X
		\quad\text{ s.t. }\quad
		X \in \Szero{n}{k},\quad
		\X := \begin{pmatrix}
			1 & x^T\\
			x & X
		\end{pmatrix}\succeq 0
	\end{equation}
	where $C_\alpha = \frac{1}{m}\begin{pmatrix}
		y & -A
	\end{pmatrix}^T\begin{pmatrix}
		y & -A
	\end{pmatrix} + \alpha \begin{pmatrix}
		0 & 0\\
		0 & I_n
	\end{pmatrix}.$
	Its dual SDP problem is
	\begin{equation}\label{sdp_slr_dual}
		\tag{\small{D-sridge}}
		\max_{\rho\in\RR, Z\in \mathbb{S}^{n}}\quad  \rho
		\quad\st\quad
		Q := C_\alpha
		-\begin{pmatrix}
			\rho & 0\\
			0 & k\diag Z \!-\! Z
		\end{pmatrix} \succeq 0,\quad
		Z \succeq 0
	\end{equation}
	In this section, we investigate the theoretical guarantees of these SDPs.
	First, we provide an upper bound on the relaxation gap between the optimal values of our SDP and \eqref{sparse_lr}. Second, for overdetermined sparse linear regression, we provide sufficient conditions ensuring that our method is exact.
	Lastly, we apply our result to Gaussian noise model, and show our SDP remains exact with high probability.
	
	\subsection{Relaxation gap} 
	We first define the coherence of the design matrix \(A\), a quantity that will appear in our upper bound on the relaxation gap.
	\begin{dfn}
		Let $A = (a_1~a_2~\dots~a_n) \in \RR^{m\times n}$ be a matrix with $\ell_2$-normalized columns, i.e., $\|a_i\|=1,\forall i=1,\dots,n$. The \textit{coherence} of $A$ is defined as 
		\[
		\mu(A) := \max_{i\neq j} |\langle a_i, a_j \rangle|
		\]
	\end{dfn}
	
	Write \(\bar A\) for the column-normalized version of \(A\), and let \(L:=\max_j \|A_{:j}\|_2\) be the largest column \(\ell_2\)-norm of \(A\). Let $x^*_{\alpha}$ be the optimal sparse solution of \eqref{sparse_lr}, and let $r^*_{\alpha} := \frac{1}{m}\|y - Ax^*_{\alpha}\|^2$ be the corresponding mean square error (MSE). For sparse linear regression, i.e., $\alpha=0$, denote $x^*_0$ by the optimal sparse solution and $r^*_0$ by the associated MSE. Then we obtain the following result.
	
	\begin{thm}
		Let $\rho^*_{\alpha}$ be the optimal objective value of \eqref{sparse_lr}, and $\bar\rho^*_{\alpha}$ be the optimal value of \eqref{sdp_slr}, then we have
		\begin{equation}
			(1-\tau\eta_\alpha)\frac{\|y\|^2}{m} + \tau \eta_\alpha r^*_{0} \leq\bar\rho^*_{\alpha} \leq \rho^*_{\alpha}
		\end{equation}
		where $\tau = 1 + (k-1)\mu(\bar A)$ and $\eta_\alpha = \frac{L^2}{\lambda_{\min}(A^TA)+m\alpha}$. Furthermore, if $\alpha$ is chosen as follows,
		\[
		\bar \alpha=\frac{\tau}{m} L^2 - \frac{1}{m}\lambda_{\min}(A^TA)
		\]
		which implies $\tau\eta_{\bar\alpha}=1$, then we have
		\begin{equation}
			0\leq \rho^*_{\bar\alpha} - \bar\rho^*_{\bar\alpha} \leq \tau \cdot \frac{L^2}{m}\|x^*_0\|^2 \leq k \cdot \frac{L^2}{m}\|x^*_0\|^2
		\end{equation}
	\end{thm}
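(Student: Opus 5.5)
The upper bound $\bar\rho^*_\alpha\le\rho^*_\alpha$ is weak duality/relaxation, so the work is the lower bound. I will produce an explicit feasible point $(\rho,Z)$ of \eqref{sdp_slr_dual} with the claimed value of $\rho$. Throughout I assume $\lambda_{\min}(A^TA)+m\alpha>0$, which is implicit in the definition of $\eta_\alpha$.

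\emph{Reduction to a rank-one sparse PCA dual.} Set $g:=A^Ty$, $c:=(\lambda_{\min}(A^TA)+m\alpha)/m$ and $s:=\frac1m\|y\|^2-\rho$. Write $\cD(Z)=k\diag Z-Z$. Then
\[
Q=\begin{pmatrix} s & -g^T/m\\ -g/m & \tfrac1m A^TA+\alpha I-\cD(Z)\end{pmatrix}.
\]
Since $\frac1m A^TA+\alpha I\succeq cI$, it suffices to make the matrix with lower-right block $cI-\cD(Z)$ PSD. For $s>0$, by a Schur complement this holds whenever
\[
cI\succeq \cD(Z)+\sigma\sigma^T,\qquad \sigma:=g/(m\sqrt s).
\]
Now apply \Cref{lem:rank_one_case}: \eqref{sdp_spca} is exact for $\Sigma=\sigma\sigma^T$. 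I will need that the dual \eqref{sdp_spca_dual} attains the common value. If attainment is not directly available I would use an $\epsilon$-optimal $Z$ and let $\epsilon\to0$; the proof in the appendix presumably constructs $Z$ explicitly anyway. This gives $Z\succeq0$ with
\[
\lambda_{\max}\big(\cD(Z)+\sigma\sigma^T\big)=\max_{\|x\|=1,\|x\|_0\le k}(\sigma^Tx)^2=\max_{|S|\le k}\|\sigma_S\|^2 .
\]
So $(\rho,Z)$ is dual feasible as soon as
\[
s\;\ge\;\frac{\max_{|S|\le k}\|A_S^Ty\|^2}{m^2c}=\frac{\max_{|S|\le k}\|A_S^Ty\|^2}{m(\lambda_{\min}(A^TA)+m\alpha)}.
\]

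\emph{Relating this to $r^*_0$.} For any $S$ with $|S|\le k$,
\[
\frac1m\|y\|^2-r^*_0\;\ge\;\frac1m\, y^TA_S(A_S^TA_S)^{-1}A_S^Ty\;\ge\;\frac{\|A_S^Ty\|^2}{m\,\lambda_{\max}(A_S^TA_S)}.
\]
By Gershgorin, $\lambda_{\max}(A_S^TA_S)\le L^2(1+(k-1)\mu(\bar A))=\tau L^2$: the diagonal entries are at most $L^2$, and $|a_i^Ta_j|\le\|a_i\|\|a_j\|\mu(\bar A)\le L^2\mu(\bar A)$. Hence
\[
\max_{|S|\le k}\|A_S^Ty\|^2\le m\tau L^2\big(\tfrac1m\|y\|^2-r^*_0\big).
\]
So the choice $s=\tau\eta_\alpha\big(\frac1m\|y\|^2-r^*_0\big)$ satisfies the feasibility condition above. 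If $s=0$, then $A_S^Ty=0$ for all small $S$, so $g=0$ and $Z=0$ works. The resulting
\[
\rho=(1-\tau\eta_\alpha)\tfrac{\|y\|^2}{m}+\tau\eta_\alpha r^*_0
\]
is a lower bound on $\bar\rho^*_\alpha$.

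\emph{Second claim.} With $\bar\alpha$ chosen so that $\tau\eta_{\bar\alpha}=1$, the lower bound becomes $\bar\rho^*_{\bar\alpha}\ge r^*_0$. Since $x^*_0$ is feasible for \eqref{sparse_lr},
\[
\rho^*_{\bar\alpha}\le r^*_0+\bar\alpha\|x^*_0\|^2\le r^*_0+\tfrac{\tau L^2}{m}\|x^*_0\|^2,
\]
using $\lambda_{\min}(A^TA)\ge0$. Finally $\tau\le k$ because $\mu(\bar A)\le1$. I expect the main obstacle to be the Schur-complement reduction to the rank-one sparse PCA dual, in particular justifying that \Cref{lem:rank_one_case} supplies a dual $Z$ attaining exactly $\max_{|S|\le k}\|\sigma_S\|^2$. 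The Gershgorin comparison and the second claim are routine.
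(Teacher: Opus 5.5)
Your proposal is correct and takes essentially the same route as the paper. Both arguments reduce \eqref{sdp_slr_dual} by a Schur complement to the rank-one sparse PCA dual, use the explicit certificate $Z=\bar u\bar u^T$ built in the proof of \Cref{lem:rank_one_case} (which settles your attainment concern), pass to $r^*_0$ via the coherence/Gershgorin bound $\lambda_{\max}(A_S^TA_S)\le\tau L^2$, and choose $\tau\eta_{\bar\alpha}=1$ for the second claim.

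One small fix: write $(A_S^TA_S)^{\dagger}$ rather than $(A_S^TA_S)^{-1}$, as the paper does, because $A_S$ can be column-rank-deficient when $\alpha>0$. With $G:=A_S^TA_S$, the bound $v^TG^{\dagger}v\ge\|v\|^2/\lambda_{\max}(G)$ still holds for $v=A_S^Ty$, since $v$ lies in the range of $G$.
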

	
	\begin{proof}
		Suppose that $\rho \leq \|y\|^2$, then from Schur complement we have that for the dual problem $Q \succeq 0$ is equivalent to 
		\begin{equation}\label{eq:slr_schur}
			(\frac{1}{m}\|y\|^2 - \rho) (\frac{1}{m}A^TA+ \alpha I_n) \succeq k\diag(Z) - Z  + \frac{1}{m^2}A^Tyy^T A 
		\end{equation}
		From \Cref{lem:rank_one_case}, there exists $Z\succeq 0$ such that
		\begin{equation}\label{eq:slr_lem}
			\frac{1}{m^2}\|A^Ty\|^2_{(2,k)} I_n \succeq k\diag(Z) - Z  + \frac{1}{m^2}A^Tyy^T A 
		\end{equation}
		where $ \|\cdot \|_{(2,k)}$ is the $k$-support norm, i.e., largest $\ell_2$ norm with respect to all possible supports of cardinality $k$. 
        
		Define  $P^{(k)}_A(y):=\max_{T\subseteq[n]}\{\|A_T(A_T^TA_T)^{\dagger}A_T^T y\|: |T| = k\}$, where $A_T$ denotes the submatrix of $A$ consisting of the columns with indices in $T$, and  $(A_T^TA_T)^{\dagger}$ denotes the pseudo-inverse of $A_T^TA_T$. 
        Let \(S\) be the support such that \(\|A^Ty\|^2_{(2,k)} = \|A_S^Ty\|^2\). Denote \(v_S:=\bar A^T_Sy\),  then we have that
		\begin{equation}\label{eq:slr_schur_2}
			\begin{split}
				\tfrac{1}{L^2}\|A_S^Ty\|^2 \leq  \|v_S\|^2 \leq& \lambda_{\max}(\bar A_S^T\bar A_S) (v_S^T (\bar A_S^T\bar A_S)^{\dagger}v_S) \\
				\leq & (1+(k-1)\mu(\bar A)) \|P_{\bar A}^{(k)}(y)\|^2\\
				= & \tau  \|P_{A}^{(k)}(y)\|^2
			\end{split}
		\end{equation}
		Combining \eqref{eq:slr_schur} \eqref{eq:slr_lem} and \eqref{eq:slr_schur_2}, $\rho$ is a feasible solution if it satisfies
		\begin{equation}
			(\frac{1}{m}\|y\|^2 - \rho) \big(\frac{1}{m}\lambda_{\min}(A^TA) + \alpha \big)\geq \frac{\tau L^2}{m^2} \|P_A^{(k)}(y)\|^2 
		\end{equation}
		which is equivalent to
		\begin{equation}
			\begin{split}
				\rho \leq \tfrac{1}{m}\|y\|^2 - \tfrac{\tau \eta_\alpha}{m} \|P_A^{(k)}(y)\|^2 
				&= \tfrac{1}{m}\|y\|^2 - \tau \eta_\alpha \Big(\tfrac{\|y\|^2}{m} - r^*_0\Big) \\
				&= (1-\tau\eta_\alpha)\tfrac{\|y\|^2}{m} + \tau\eta_\alpha r^*_{0}
			\end{split}
		\end{equation}
		It implies that $\bar \rho^* \geq (1-\tau\eta_\alpha)\tfrac{\|y\|^2}{m} + \tau\eta_\alpha r^*_{0}$. Suppose $\tau \eta_{\bar\alpha} = 1$, that is,
		\[
		\bar \alpha=\frac{\tau}{m} L^2 - \frac{1}{m}\lambda_{\min}(A^TA)
		\]
		then we have
		\begin{equation}
			\bar\rho^*_{\alpha}\geq \Big(r^*_0  + \bar \alpha \|x^*_0\|^2\Big) - \bar \alpha \|x^*_0\|^2 \geq \rho^*_{\alpha} - \bar \alpha \|x^*_0\|^2
		\end{equation}
		which implies that
		\begin{equation}\label{eq:slr_upper_bound}
			0\leq \rho^*_{\alpha} - \bar\rho^*_{\alpha} \leq \Big(\frac{\tau L^2}{m} - \frac{\lambda_{\min}(A^TA)}{m}\Big) \|x^*_0\|^2 \leq k\cdot\frac{L^2}{m}\|x^*_0\|^2
			\qedhere
		\end{equation} 
	\end{proof}
	
	When the regularization parameter is set to zero (\(\alpha=0\)) and the sample size exceeds the number of features (\(m>n\)), that is, in the overdetermined sparse regression regime, we establish the following upper bound on the relaxation gap.
	
	\begin{cor}
		Let $\bar r^*_0$ be the optimal value of \eqref{sdp_slr} when $\alpha=0$, then we have that
		\begin{equation}
			(1-\tau\eta_0) \frac{\|y\|^2}{m} + \tau\eta_0 r^*_0  \leq \bar r^*_0 \leq r^*_0
		\end{equation}
		Suppose $\eta_0=\tau=1$, that is, $\frac{L^2}{\lambda_{\min}(A^TA)}=1$ and $\mu(\bar A) = 0$, then $\rho^*_0 = \bar\rho^*_0$.
	\end{cor}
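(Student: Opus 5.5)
The corollary should follow by specializing the preceding theorem to $\alpha = 0$ and then reading off the degenerate case $\tau\eta_0 = 1$.

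\textbf{Sandwich inequality.} Setting $\alpha=0$ in the preceding theorem gives $\eta_0 = L^2/\lambda_{\min}(A^TA)$. In the overdetermined regime $m>n$, and assuming $A$ has full column rank, we have $\lambda_{\min}(A^TA)>0$, so $\eta_0$ is finite. The objective of \eqref{sparse_lr} at $\alpha=0$ is exactly the MSE, so $\rho^*_0 = r^*_0$, and we write $\bar r^*_0 := \bar\rho^*_0$. The lower bound $(1-\tau\eta_0)\|y\|^2/m + \tau\eta_0 r^*_0 \leq \bar r^*_0$ is then the first display of the theorem verbatim. The upper bound $\bar r^*_0\leq r^*_0$ is weak duality plus the fact that \eqref{sdp_slr} relaxes \eqref{sparse_lr}: lifting $x^*_0$ to $\X = (1, x^{*T}_0)^T(1, x^{*T}_0)$ gives a feasible point by \Cref{lem:cardinality}, with objective value $r^*_0$.

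\textbf{Equality case.} If $\eta_0 = \tau = 1$, then $\tau\eta_0=1$ and the coefficient of $\|y\|^2/m$ vanishes. The lower bound becomes $r^*_0$, so $\bar r^*_0 = r^*_0$, that is, $\rho^*_0=\bar\rho^*_0$.

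\textbf{Main obstacle.} The only real check is that the theorem's proof is valid at $\alpha=0$. The Schur complement step \eqref{eq:slr_schur} requires $\frac{1}{m}A^TA \succ 0$ in order to pass to the scalar condition with $\lambda_{\min}(A^TA)$, and the assumption $\rho\le \|y\|^2/m$ must be consistent with the resulting bound. Both hold when $A$ has full column rank, which is why the overdetermined hypothesis is needed. I would note this explicitly; no other obstruction is expected.

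It is also worth remarking that the hypotheses are restrictive. The condition $\mu(\bar A)=0$ means the columns are orthogonal. Combined with $L^2=\lambda_{\min}(A^TA)$, it forces $A^TA$ to be a multiple of the identity with equal column norms. In that case sparse regression reduces to selecting the $k$ largest entries of $|A^Ty|$, which is consistent with exactness. This gives a sanity check but is not needed for the proof.
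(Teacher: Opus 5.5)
Your proposal is correct and follows the same route as the paper, which gives no separate proof and gets the corollary by setting $\alpha=0$ in the preceding theorem (so that $\rho^*_0=r^*_0$) and noting that $\tau\eta_0=1$ collapses the lower bound to $r^*_0$. Your caveat that $\lambda_{\min}(A^TA)>0$ is needed for $\eta_0$ to be finite is exactly what the paper's ``overdetermined'' framing assumes implicitly; one small correction is that the upper bound $\bar r^*_0\le r^*_0$ comes from your lifting argument alone, not from weak duality.
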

	
	\subsection{Exact recovery}
	We next present a setting in which our stability result applies. Before proceeding to the result, we start with the following homogenized problem for sparse linear regression.
	\begin{equation}\label{homogenious_slr}
		\tag{P-slr}
		\begin{aligned}
			\min_{x=(x_0,x_1\dots,x_n)\in \RR^{n+1}} \quad  x^T H x - 2h^Tx + c
			\quad\st\quad
			x_0^2 = 1, \|x\|_0 \leq k,
		\end{aligned}
	\end{equation}
	where \(H = (\vzero; A)^T(\vzero; A)\), \(h =(0; A^Ty)\), and \(c = \|y\|^2\). The above problem can be formulated as the following SDP.
	\begin{equation}\label{sdp_homogenious_slr}
		\tag{Q-slr}
		\begin{aligned}
			\min_{X \in \SS^{n+1}} \quad   C \bullet X
			\quad\st\quad
			X_{11} = 1, X \in \Szero{n+1}{k+1},
		\end{aligned}
	\end{equation}
	where \(C = \left(\begin{smallmatrix}
		c & -h^T\\
		-h & A^TA
	\end{smallmatrix}\right)
	\). And the dual problem of \eqref{sdp_homogenious_slr} is
	\begin{equation}\label{sdp_homogenious_slr_dual}
		\tag{D-slr}
		\begin{aligned}
			\max_{\rho\in\RR,Z\in\SS^{n+1}} \quad   \rho 
			\quad\st\quad
			C - \rho E_{11} - \big((k+1)\diag(Z)-Z\big) \succeq 0
		\end{aligned}
	\end{equation}
	where \(E_{11} = \left(\begin{smallmatrix}
		1 & 0\\
		0 & 0
	\end{smallmatrix}\right) \).
	
	\begin{thm}\label{thm:stable_LR}
		Suppose that in noiseless case the underlying parameter of the sparse linear regression model \eqref{homogenious_slr} is $\bar x$ with $\|\bar x\|_0 = k$, i.e., \(\bar y= A \bar x\), and $\sigma_{\min}(A)>0$ in \eqref{sdp_homogenious_slr}. Let \(S:=\supp(\bar x)\) and \(c:=\min\{|\bar x_i|: i \in S\}\). Denote \(\kappa\) as the condition number of \(A\), let \(q:=(1+\tfrac{1}{2}\|\bar x\|)(1+\tfrac{4(2-\sqrt{2})}{c}\|\bar x\|)\), and \(p:=\kappa(q + \tfrac{2\kappa}{c})\). In noisy setting, random noise \(\varepsilon \in \RR^n\) appears in the model,
		\begin{equation}\label{eq:noisy_slr}
			\tilde y = A\bar x + \varepsilon
		\end{equation}
		we have \eqref{sdp_homogenious_slr} is exact if \(\|\varepsilon\| < \eta \sigma_{\min}(A)\) where  \(\eta =\min\big\{\tfrac{\sqrt{p^2 + 4q}-p}{2q}, \tfrac{c}{2}, (3-2\sqrt{2})\|\bar x\| \big\}\).
	\end{thm}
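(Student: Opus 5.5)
This follows the template of \Cref{thm:stable_sdp}: specialize \Cref{thm:general_exact_region} to the homogenized problem \eqref{homogenious_slr}, and bound every quantity in \eqref{eq:exact_region} by $\|\varepsilon\|/\sigma_{\min}(A)$.

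\emph{Unperturbed certificate.} Write $\bar C$ for the objective matrix with $y=\bar y=A\bar x$, so that $\bar C=(-\bar y,\,A)^T(-\bar y,\,A)$ after reordering the homogenizing coordinate first. The lifted optimum is $\bar z=(1,\bar x)$, which has exactly $k+1$ nonzeros, and $\bar C\bar z=0$. The only constraint is $g(z)=z_0^2-1$, and $\nabla g(\bar z)=2e_0$. Hence ACQ holds with $s=1$, $\sigma_s=2$ and $\|\cA\|=1$. The dual certificate is $\bar\rho=0$, $\bar Z=0$, $\bar Q=\bar C$. Since $\sigma_{\min}(A)>0$, the kernel of $(-\bar y,\,A)$ is spanned by $\bar z$, so $\bar Q$ has corank one. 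By Courant--Fischer, $\nu_2(\bar Q)\ge\sigma_{\min}(A)^2$ up to the factor coming from $\|\bar z\|^2=1+\|\bar x\|^2$; I will track this factor, and expect it to be the source of the $\|\bar x\|$ terms in $q$. Also $\|\bar Q\|\le\sigma_{\max}(A)^2(1+\|\bar x\|^2)$, and this ratio is where $\kappa$ will enter.

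\emph{Perturbation.} With $\tilde y=A\bar x+\varepsilon$ we get $\Delta C=C-\bar C$, whose only nonzero blocks are the first row and column, given by $(\|\tilde y\|^2-\|\bar y\|^2,\,-\varepsilon^TA)$. Thus $\|\Delta C\|\lesssim\sigma_{\max}(A)\|\varepsilon\|+\|\varepsilon\|^2$. I prefer to first rescale, or equivalently compare against $A^TA$, so that the condition can be expressed in units of $\sigma_{\min}(A)$.

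\emph{Support stability.} I will show that the noisy sparse optimum $x^*$ keeps support $S$, adapting the argument of \Cref{thm:stable_sdp}. Restricted least squares on $S$ gives $x^*_S=\bar x_S+(A_S^TA_S)^{-1}A_S^T\varepsilon$, so $\|x^*-\bar x\|\le\|\varepsilon\|/\sigma_{\min}(A)<\eta\le c/2$. Any competing support $S'$ must drop some index $i$ with $|\bar x_i|\ge c$. The extra residual is then at least $\sigma_{\min}(A)^2c^2$ minus cross terms of size $O(\|\varepsilon\|\sigma_{\min}(A)c)$, which is positive when $\|\varepsilon\|<\tfrac c2\sigma_{\min}(A)$. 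This yields $c_{x}\ge c/2$.

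\emph{Bounding $\eta$ from \eqref{eq:exact_region}.} The factor $\eta$ there equals $1+\|\bar z\|^2\langle z,z_2\rangle^2/\langle z,\bar z\rangle^2$. I will bound it by a constant using $\|x^*-\bar x\|\le(3-2\sqrt2)\|\bar x\|$, which is the third term of the $\min$. This condition gives an angle bound; the constant $4(2-\sqrt2)$ should come from $\sqrt2\sin$ estimates, as in the PCA proof.

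\emph{Verifying \eqref{eq:exact_region}.} Substitute all bounds and divide by $\sigma_{\min}(A)^2$. In the variable $u=\|\varepsilon\|/\sigma_{\min}(A)$ this should reduce to the quadratic inequality $qu^2+pu<1$. Its positive root is $(\sqrt{p^2+4q}-p)/(2q)$, which is the first term of the $\min$ in $\eta$.

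The main obstacle is the bookkeeping needed to make the constants match exactly $q$ and $p$. In particular, the $\|\Delta C\|$ term contains the quadratic piece $\|\varepsilon\|^2$, which produces $q$, while the $\|\bar Q\|\,\|x-\bar x\|/c_x$ term produces $\kappa\cdot 2\kappa/c$. I would first derive the inequality with loose constants, and then tighten each step to see whether the stated $q$ and $p$ come out.
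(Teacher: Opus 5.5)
Your plan is essentially the paper's proof. It uses the same certificate $\bar\rho=0$, $\bar Z=\vzero$ with $\sigma_s=2$ and $\|\cA\|=1$, the same bound on $\|\Delta C\|$, support and angle control from the $c/2$ and $(3-2\sqrt2)\|\bar x\|$ terms, and the same reduction of \eqref{eq:exact_region} to a quadratic inequality in $\|\varepsilon\|/\sigma_{\min}(A)$. Your residual comparison over competing supports is actually cleaner than the paper's step $\|A(x^*-\bar x)\|\le\|\varepsilon\|$, which tacitly presupposes $\supp(x^*)=S$.

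On the bookkeeping you flag, two of your guesses about where the constants come from are off.
\begin{itemize}
\item There is no loss in $\nu_2(\bar C)\ge\sigma_{\min}(A)^2$, so the $\|\bar x\|$ terms in $q$ do not come from the $\|\bar z\|^2$ factor.
\item $q$ is just $(1+\tfrac{1}{\sigma_s}\|\cA\|\|\bar x\|)(1+\tfrac{1}{c_x}\|x\|)$ with $c_x\ge c/2$ and $\|x^*\|\le(4-2\sqrt2)\|\bar x\|$; that is where $4(2-\sqrt2)$ comes from, not a $\sqrt2\sin$ estimate.
\item The $(3-2\sqrt2)\|\bar x\|$ condition also gives the angle bound $\ge\sqrt2/2$, which caps the prefactor in \eqref{eq:exact_region} by $2$.
\item $2\kappa/c$ comes from $\tfrac{1}{c_x}\|\bar Q\|\,\|x^*-\bar x\|$ with $\|\bar Q\|$ taken as $\|A\|^2$.
\end{itemize}

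The paper is loose at exactly these points. It takes $\|\bar Q\|=\|A\|^2$, although $\|\bar Q\|$ can be as large as $\|A\|^2(1+\|\bar x\|^2)$. It drops the cross term $2\langle\bar y,\varepsilon\rangle$ in $\|\tilde y\|^2-\|\bar y\|^2$, and it ignores the homogenizing coordinate. So an honest computation will give constants of this form, but not exactly $q$ and $p$.
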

	\begin{proof}
		In noiseless case, there exists dual multipliers \(\bar \rho = 0, \bar Z = \vzero\) such that \(\bar Q:=\bar C - \bar \rho E_{11} - \big((k+1)\diag(\bar Z)-\bar Z\big)\) satisfies 
		\[
		\bar Q \bullet \left(\begin{smallmatrix}
			1 & \bar\beta^T\\
			\bar\beta & (\bar\beta\bar\beta^T)
		\end{smallmatrix}\right)=0,\quad\bar Q \succeq 0 \quad\text{and}\quad \corank(\bar Q)=1
		\]
		Then it is enough to show that ACQ holds at \(\bar x = (1;\bar \beta) \). For \eqref{homogenious_slr}, the only constraint is $g(x) = (x_0)^2-1 = 0$. ACQ holds at $x = \bar x$, for $\rank(\nabla g(\bar x))  = \codim_{\bar x} (\vX_{\supp(\bar x)}) = 1$. Furthermore, we have \(\sigma_{\min}(\nabla g(\bar x))=2\) and \(\|\cA\|=1\), where \(\cA(\rho) = \rho E_{11}\). Let \(x^*\) be the optimal solution of \eqref{sdp_homogenious_slr} with \(\tilde C = \left(\begin{smallmatrix}
			\|\tilde y\|^2 & -\tilde y^T A\\
			-A^T\tilde y & A^TA
		\end{smallmatrix}\right)\). Let \(c^*:=\min\{|x^*_i|: i\in\supp(x^*)\}\). Notice that
		\begin{equation}
			\|A(x^* - \bar x)\| = \|Ax^* - \bar y\| \leq \|\tilde y - \bar y\| = \|\varepsilon\|
		\end{equation}
		which implies that \(\|x^* - \bar x\| \leq \tfrac{1}{\sigma_{\min}(A_S)}\|\varepsilon\|\).
		
		Furthermore, we have
		\begin{equation}
			\begin{split}
				\|\tilde C - \bar C\| &= \Big\|\left(\begin{smallmatrix}
					\|\tilde y\|^2 - \|\bar y\|^2 & -(\tilde y - \bar y)^T A\\
					-A^T(\tilde y - \bar y) & 0
				\end{smallmatrix}\right)\Big\|\\
				&\leq |\|\tilde y\|^2 - \|\bar y\|^2| + 2\|A^T(\tilde y - \bar y)\|\\
				&\leq \|\varepsilon\|^2 + 2\|A\|_{op}\|\varepsilon\|
			\end{split}
		\end{equation}
		
		Thus, if \(\|\varepsilon\|\leq \min\{\tfrac{c}{2},(3-2\sqrt{2})\|\bar x\|\}\sigma_{\min}(A_S)\), we have \(\|x-x^*\|\leq \frac{c}{2}\) which implies that  \(c^* \geq \tfrac{c}{2}\) and consequently \(\supp(x^*)=\supp(\bar x)\). It also implies 
		\[
		2(\sqrt{2}-1)\|\bar x\|\leq\|x^*\|\leq 2(2- \sqrt{2})\|\bar x\|
		\]
		Then it is straightforward to show that
		\begin{equation}
			\begin{split}
				\|x^*-\bar x\|^2 &\leq (3-2\sqrt{2})^2\|\bar x\|^2\\
				&\leq \|x^*\|^2 + \|\bar x\|^2 -\sqrt{2}\|x^*\|\|\bar x\|
			\end{split}
		\end{equation}
		which implies that \(\langle\tfrac{x*}{\|x^*\|},\tfrac{\bar x}{\|\bar x\|}\rangle \geq \tfrac{\sqrt{2}}{2}\).
		In order to show \eqref{eq:exact_region}, it is enough to show that
		\begin{equation}
			\begin{split}
				\nu_2(\bar C) =\sigma^2_{\min}(A) >& 2q\|\varepsilon\|^2 + (q+\tfrac{2\kappa}{c})\|A\|_{op}\|\varepsilon\|
			\end{split}
		\end{equation}
		which holds if \(\|\varepsilon\| < \tfrac{\sqrt{p^2 + 4q}-p}{2q} \sigma_{\min}(A)\). Thus, combining all results, we have that if \(\eta =\min\big\{\tfrac{\sqrt{p^2 + 4q}-p}{2q}, \tfrac{c}{2}, (3-2\sqrt{2})\|\bar x\| \big\}\), then \eqref{sdp_homogenious_slr} is exact.
	\end{proof}
	
	\subsection{Gaussian noise model}
	
	We now consider the linear regression model with Gaussian random noise,
	\[
	y = A \bar x + \epsilon, \;\text{where}\; \epsilon \sim N(0, \delta^2 I_m)
	\]
	
	\begin{thm}
		For linear regression model with Gaussian random noise, if \( \eta \sigma_{\min}(A) \geq \delta\sqrt{m} + t\), then \eqref{sdp_homogenious_slr} is exact with probability at least \(1 - e^{-t^2/(2\delta^2)}\)
	\end{thm}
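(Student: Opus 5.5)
The plan is to reduce the statement to \Cref{thm:stable_LR} and then control $\|\epsilon\|$ with a Gaussian concentration bound. By \Cref{thm:stable_LR}, \eqref{sdp_homogenious_slr} is exact on the event $\{\|\epsilon\| < \eta\,\sigma_{\min}(A)\}$. That theorem is deterministic in the noise vector, so it is enough to show that this event has probability at least $1-e^{-t^2/(2\delta^2)}$ whenever $\eta\,\sigma_{\min}(A)\ge \delta\sqrt m + t$. Since $\{\|\epsilon\| < \delta\sqrt m + t\}\subseteq\{\|\epsilon\|<\eta\sigma_{\min}(A)\}$ under the hypothesis, we only need a tail bound of the form
\[
\mathbb{P}\big\{\|\epsilon\|\ge \delta\sqrt m + t\big\}\le e^{-t^2/(2\delta^2)}.
\]
The bound will be stated with $\geq$ on the event, so that the strict inequality needed in \Cref{thm:stable_LR} holds on the complement.

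To get this bound, write $\epsilon=\delta g$ with $g\sim N(0,I_m)$. The map $g\mapsto \delta\|g\|$ is $\delta$-Lipschitz with respect to the Euclidean norm. Gaussian concentration for Lipschitz functions (the Tsirelson--Ibragimov--Sudakov inequality, as in \cite{vershynin2018high}) gives
\[
\mathbb{P}\big\{\delta\|g\|\ge \mathbb{E}\,\delta\|g\| + t\big\}\le e^{-t^2/(2\delta^2)}.
\]
Jensen's inequality gives $\mathbb{E}\|g\|\le (\mathbb{E}\|g\|^2)^{1/2}=\sqrt m$, so the event $\{\|\epsilon\|\ge\delta\sqrt m+t\}$ is contained in the event above, and the tail bound follows.

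Finally, note that the noise here lives in $\RR^m$; this is the $\varepsilon$ of \eqref{eq:noisy_slr}, whose dimension the earlier statement writes loosely as $\RR^n$. Also, $\eta$, $\sigma_{\min}(A)$ and $\bar x$ are deterministic, so the reduction is legitimate. The main point needing care is the concentration constant: one must use the sharp Lipschitz concentration with constant $1/2$ in the exponent, rather than a generic sub-Gaussian bound with an unspecified absolute constant, to match the stated probability exactly. Beyond that, the argument is a direct combination of \Cref{thm:stable_LR} with the norm bound.
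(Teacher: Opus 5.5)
Your proposal is correct and takes the paper's route: apply \Cref{thm:stable_LR} on the event $\{\|\varepsilon\| < \delta\sqrt m + t\}\subseteq\{\|\varepsilon\|<\eta\,\sigma_{\min}(A)\}$ and bound the complement using $\mathbb{P}\{\|\varepsilon\|\ge \delta\sqrt m+t\}\le e^{-t^2/(2\delta^2)}$. The paper only asserts this tail bound, and its intermediate display garbles the event inclusion, so your derivation (sharp Gaussian Lipschitz concentration plus $\mathbb{E}\|g\|\le\sqrt m$, with the strict inequality handled on the complement, implicitly for $t\ge 0$) supplies exactly what the paper leaves unstated.
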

	\begin{proof}
		It is true that 
		\[
		\mathbb{P}\{\|\varepsilon\| \geq \delta\sqrt{m} + t\} \leq  e^{-t^2/(2\delta^2)}
		\]
		which implies that
		\[
		\mathbb{P}\{\|\varepsilon\| \geq \eta \sigma_{\min(A)}+t\} \leq  e^{-t^2/(2\delta^2)}
		\]
		Applying the above inequality to \Cref{thm:stable_LR}, we have \eqref{sdp_homogenious_slr} is exact with probability at least \(1-  e^{-t^2/(2\delta^2)}\).
	\end{proof}
	
	\section{Experiments}\label{section:experiments}
	
	In this section we test the computational performance of our \ourset relaxation \eqref{sparse_sdp}, and its variant \eqref{sparse_sdp_plus}. We consider four applications: (i) sparse PCA, (ii) sparse ridge regression, (iii) bounding the restricted isometry constant,  and (iv) sparse canonical correlation analysis. We compare our methods against heuristic methods and alternative SDP relaxations. 
	Our experiments were ran in \texttt{Julia} using a MacBook Pro.
	The SDPs are solved with the commercial solver \texttt{COPT}.

	\subsection{Sparse PCA}
	We consider two types of datasets:
	\begin{enumerate}
		\item Synthetic: \(\Sigma\) is generated from the spiked Wigner and spiked Wishart models.
		\item Real: \(\Sigma\) is the empirical covariance matrix from four datasets: \emph{Pitprops} (\(n=13\)), \emph{Eisen-1} (\(n=79\)), \emph{Eisen-2} (\(n=118\)), and \emph{Colon} (\(n=500\)).
	\end{enumerate}
	
	We compare our methods \eqref{sdp_spca} and its variant under \eqref{sparse_sdp_plus} against the following benchmarks:
	\begin{itemize}
		\item SDP-\(\ell_1\): Special case of \eqref{eq:sdpK} with $K = \Sone{n}{k}$; see \cite{d2004direct}.
		\item SDP-LX: Special case of \eqref{eq:sdpK} with \(K=\Sz{n}{k}\); see \cite{li2025exact}.
		
		\item SDP-BCP: 
		Let \(\Sbs{n}{k}\) be defined as follows,
		\begin{equation}
			\begin{split}
				\Sbs{n}{k} = \{X: &\; \exists z \in \RR^n \st 
				\|X_{i,:}\|^2\leq X_{ii}z_i,\;0\leq z_i\leq \tr(X), i\in[n]\\
				&X_{ij}\leq M_{ij}z_i,\;\forall i,j\in[n],\quad  \sum_{i=1}^n z_i = k \tr(X),X\in \Sone{n}{k}\}
			\end{split}
		\end{equation}
		where \(M_{ij} = 1\) if \(i=j\), \(M_{ij} = \tfrac{1}{2}\) if \(i\neq j\). Then their SDP is  a special case of  \eqref{eq:sdpK} with \(K=\Sbs{n}{k}\); see \cite{bertsimas2022solving}.
		\item TPow: truncated power method (TPower\cite{yuan2013truncated})
		\item TPCA: naive truncated PCA, namely, truncating the PCA solution to the \(k\) largest entries.
	\end{itemize}
	For the SDPs, we take the optimal objective value of the relaxation as an upper bound, and we obtain a corresponding lower bound by truncating its optimal solution via TPCA and evaluating the resulting objective.
	For the heuristic methods TPower and TPCA, we report only the objective value attained by their solutions as a lower bound; upper bounds are not available.
	
	\subsubsection{Spiked Models with synthetic data}
	
	Consider the spiked Wigner and Wishart models, as discussed in \Cref{subsection:spiked_model}.
	We generate the spike vector \(\bar x\in \RR^n\) with \(\|\bar x\|_0=20\), where its nonzero entries are i.i.d. standard Gaussian. Then we normalize \(\bar x\) such that \(\|\bar x\|=1\).
	We evaluate \(100\) independent random instances of dimension \(n=50\).
	For each instance, we compute the true optimal solution and, for every method, form the ratio of the true optimal objective value to that method’s lower bound. We then plot the empirical CDF of this ratio across all instances. By construction, the ratio is at least one; a value of \(1\) indicates exact recovery, and lower ratios (closer to \(1\)) reflect better performance. For the two spiked models, we consider sparsity \(k \in \{3,4,5,6\}\). As shown in \Cref{fig:wigner_spike_model} and \Cref{fig:wishart_spike_model}, SDP-S1 is one of the top-performing method.

	\begin{figure}[ht]
		\centering 
		\vspace{-0.35cm} 
		\subfigtopskip=2pt 
		\subfigbottomskip=2pt
		\subfigcapskip=-5pt 
		\subfigure[\(k=3\)]{
			\includegraphics[width=0.45\linewidth]{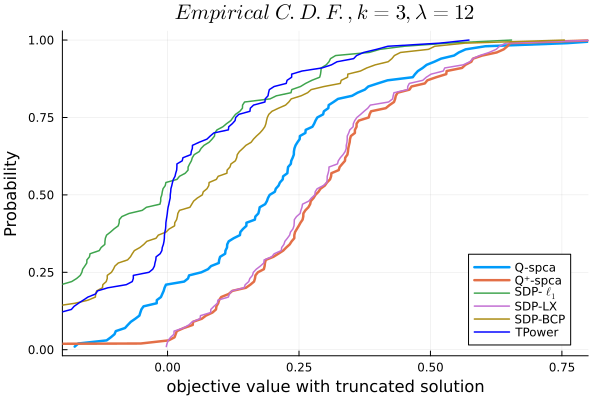}}
		\subfigure[\(k=4\)]{
			\includegraphics[width=0.45\linewidth]{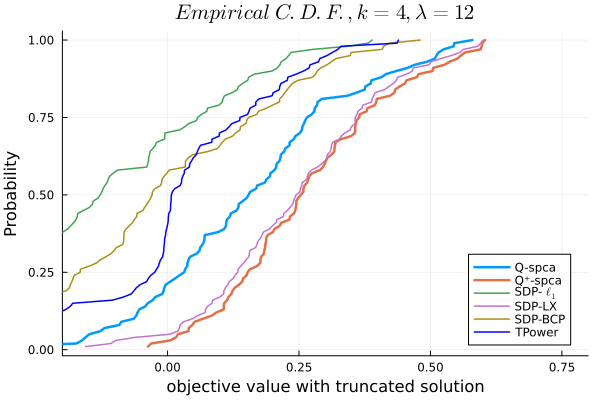}}
		\subfigure[\(k=5\)]{
			\includegraphics[width=0.45\linewidth]{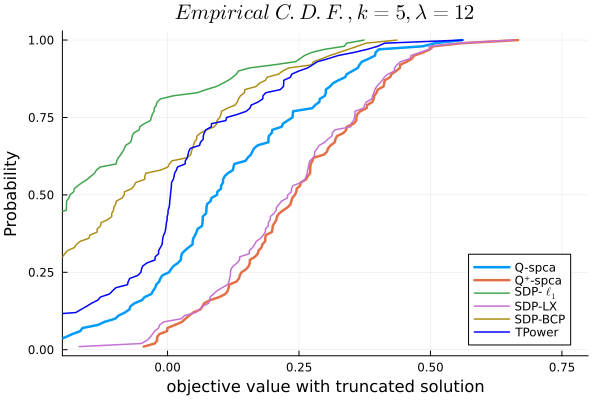}}
		\subfigure[\(k=6\)]{
			\includegraphics[width=0.45\linewidth]{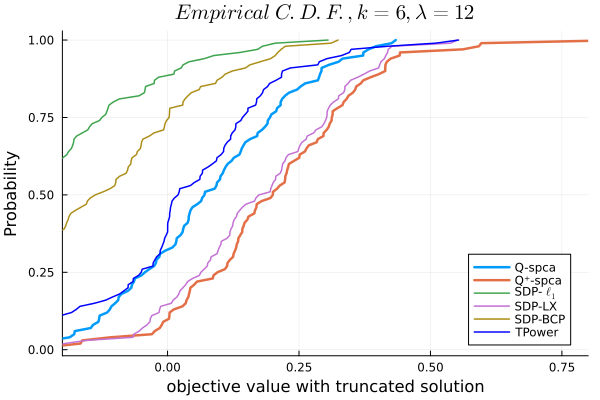}}
		\caption{Wigner spiked model: $n=50,\lambda=12$}
		\label{fig:wigner_spike_model}
	\end{figure}
	
	\begin{figure}[ht]
		\centering 
		\vspace{-0.35cm} 
		\subfigtopskip=2pt 
		\subfigbottomskip=2pt
		\subfigcapskip=-5pt 
		\subfigure[\(k=3\)]{
			\includegraphics[width=0.45\linewidth]{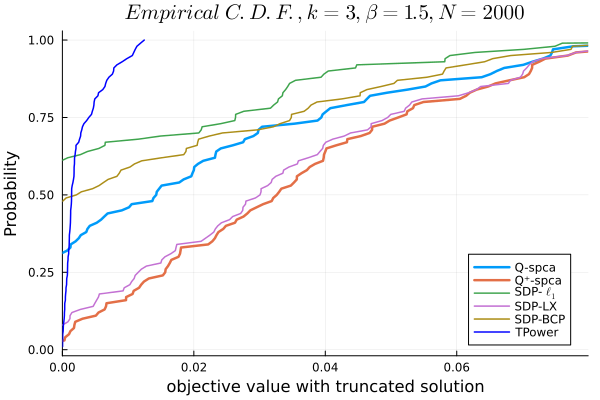}}
		\subfigure[\(k=4\)]{
			\includegraphics[width=0.45\linewidth]{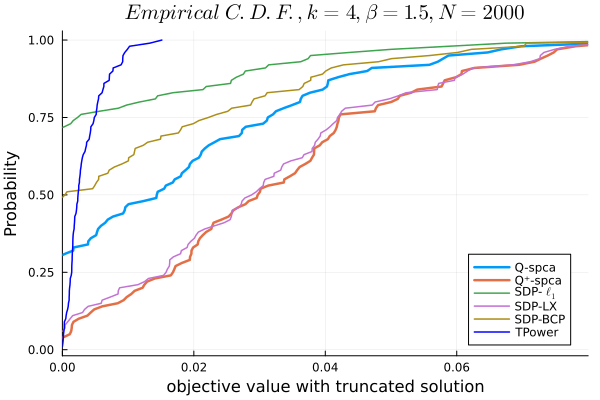}}
		
		\subfigure[\(k=5\)]{
			\includegraphics[width=0.45\linewidth]{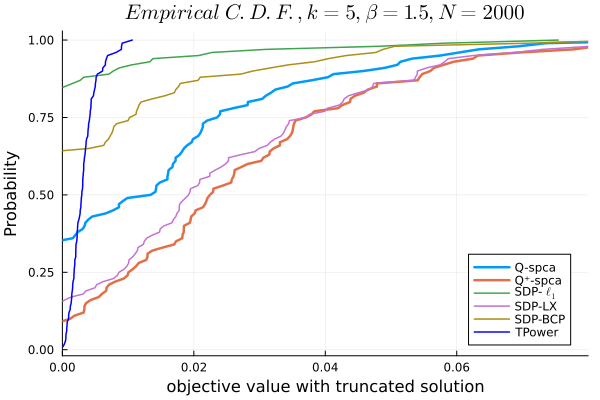}}
		\subfigure[\(k=6\)]{
			\includegraphics[width=0.45\linewidth]{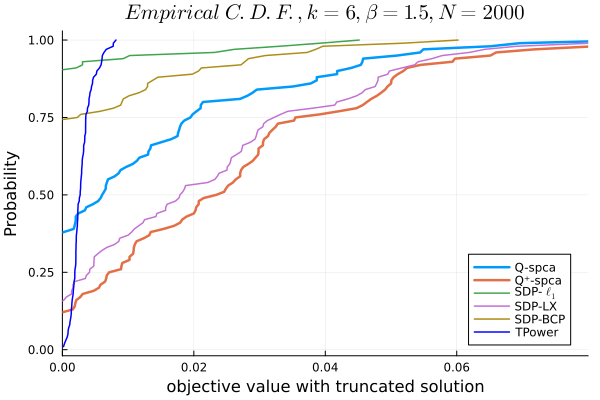}}
		
		\caption{Wishart spiked model: $n = 50, N=2000, \beta=1.5$}
		\label{fig:wishart_spike_model}
	\end{figure}
	
	\subsubsection{Real datasets}
	For each dataset, we report the upper and lower bounds achieved by each method as the sparsity level \(k\) varies; note that TPower and TPCA only provide lower bounds. For \emph{Pitprops} (\(n=13\)), SDP-S obtains the true solution when \(k=5\), and its variant SDP-S1 attains exact solutions throughout \(k\in \{4,5,6,7\}\). On the remaining datasets, \emph{Eisen-1} (\(n=79\)), \emph{Eisen-2} (\(n=118\)), and \emph{Colon} (\(n=500\))\footnote{Owing to solver limitations, we report only the instances solved successfully.}, SDP-S attains the global optimum\footnote{Upper and lower bound coincide.} across all tested values of \(k\).
	Results are presented in \Cref{fig:spca_pitprops}, \Cref{fig:spca_eisen_1}, \Cref{fig:spca_eisen_2}, and \Cref{fig:spca_colon}.
	
	\begin{figure}[htb!]
		\centering
		\includegraphics[width=0.9\linewidth]{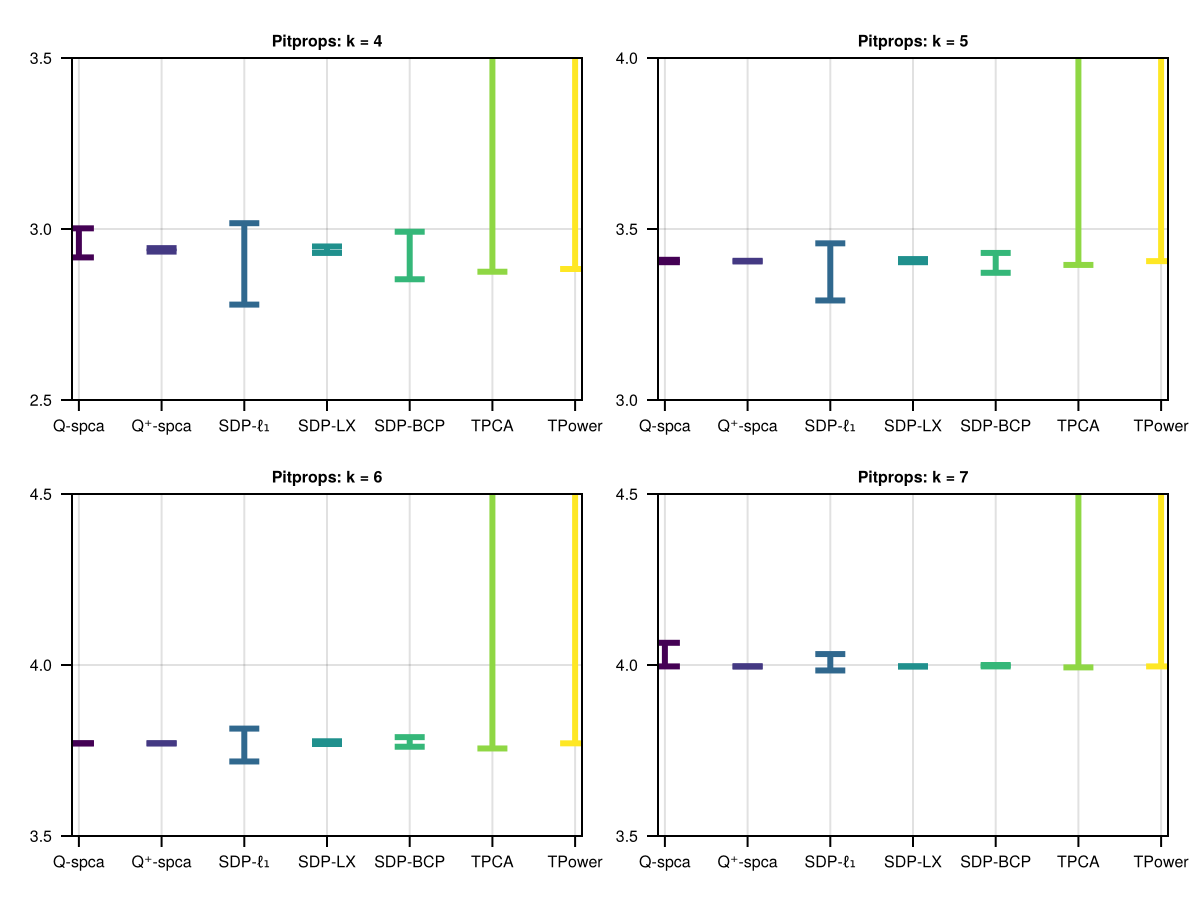}
		\caption{Comparison of upper and lower bound with Pitprops dataset ($n = 13$).}
		\label{fig:spca_pitprops}
	\end{figure}
	
	\begin{figure}[ht]
		\centering
		\includegraphics[width=0.9\linewidth]{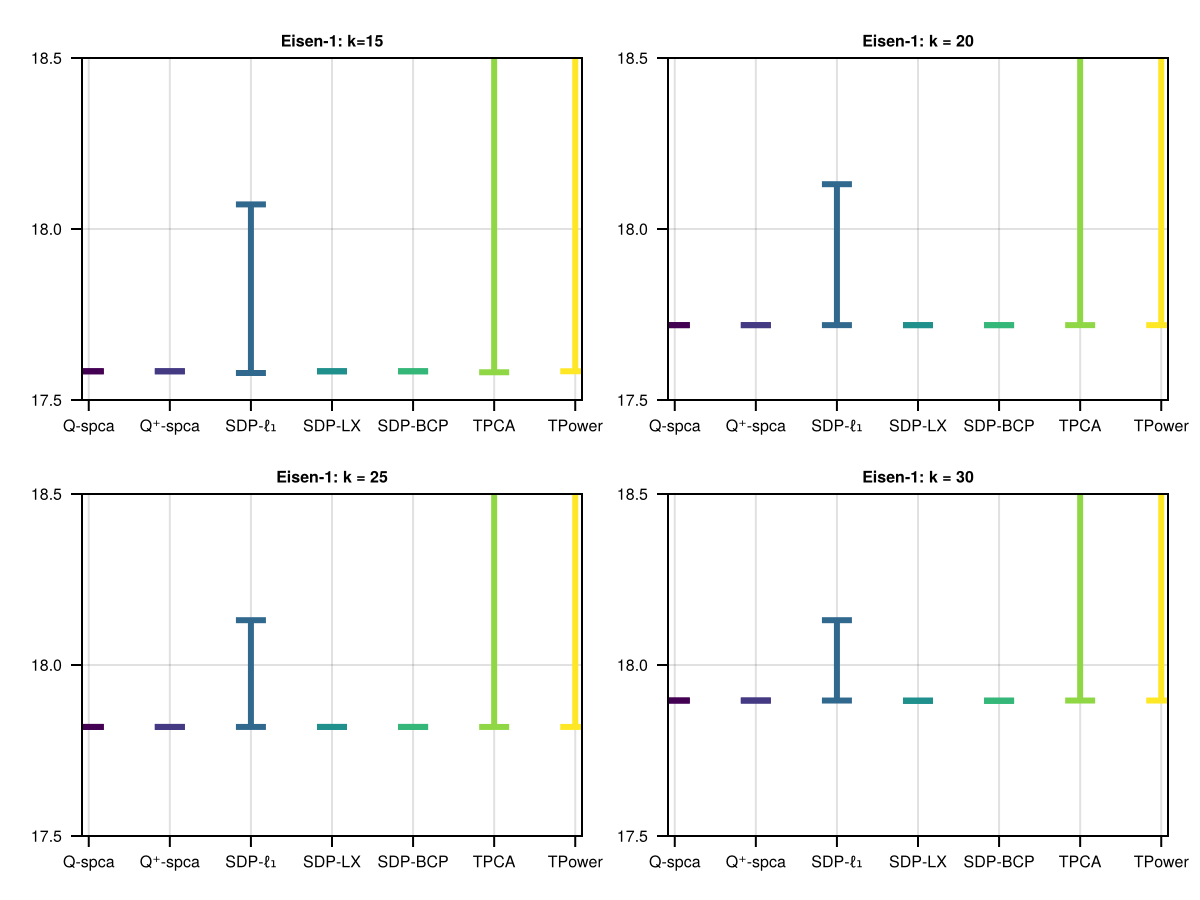}
		\caption{Comparison of upper and lower bound with Eisen-1 dataset ($n = 79$).}
		\label{fig:spca_eisen_1}
	\end{figure}
	
	\begin{figure}[ht]
		\centering
		\includegraphics[width=0.9\linewidth]{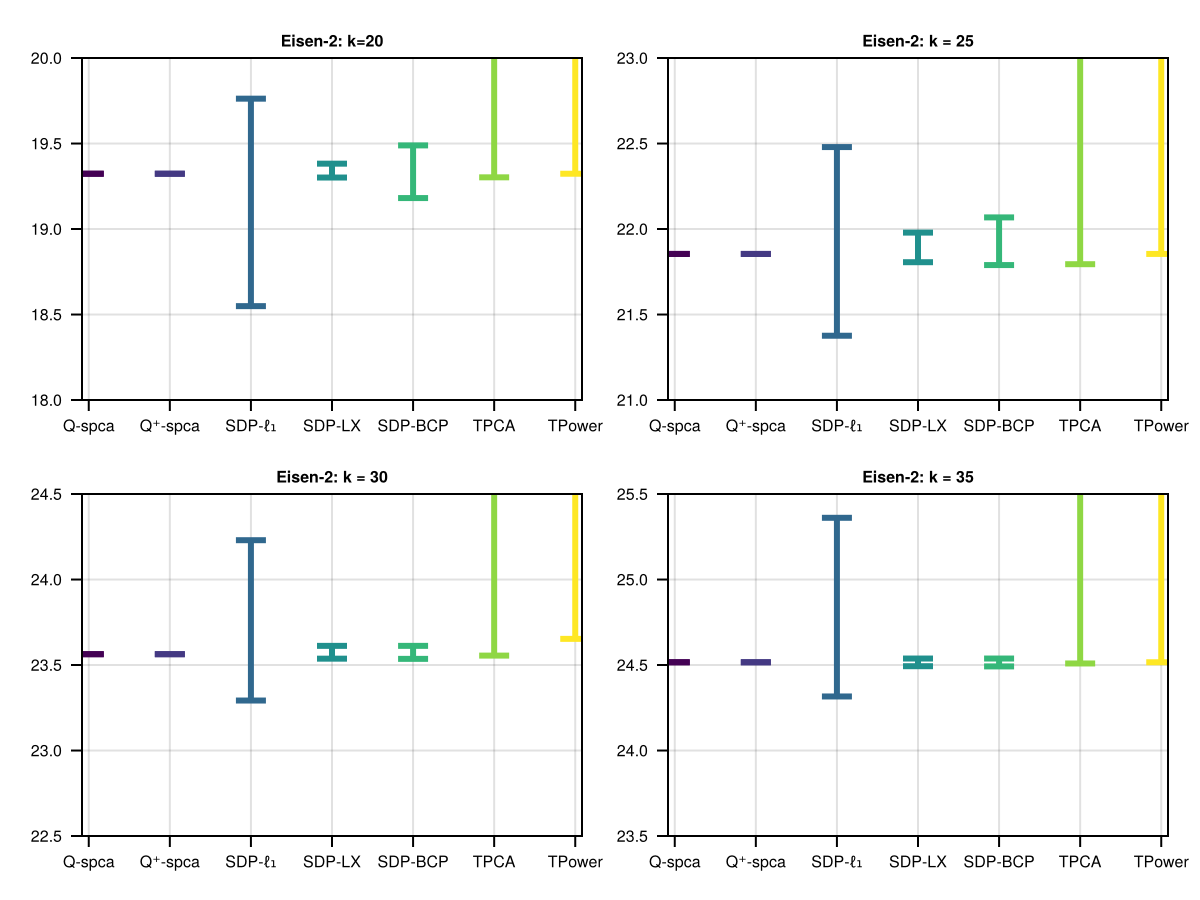}
		\caption{Comparison of upper and lower bound with Eisen-2 dataset ($n = 118$).}
		\label{fig:spca_eisen_2}
	\end{figure}
	
	\begin{figure}[ht]
		\centering
		\includegraphics[width=0.9\linewidth]{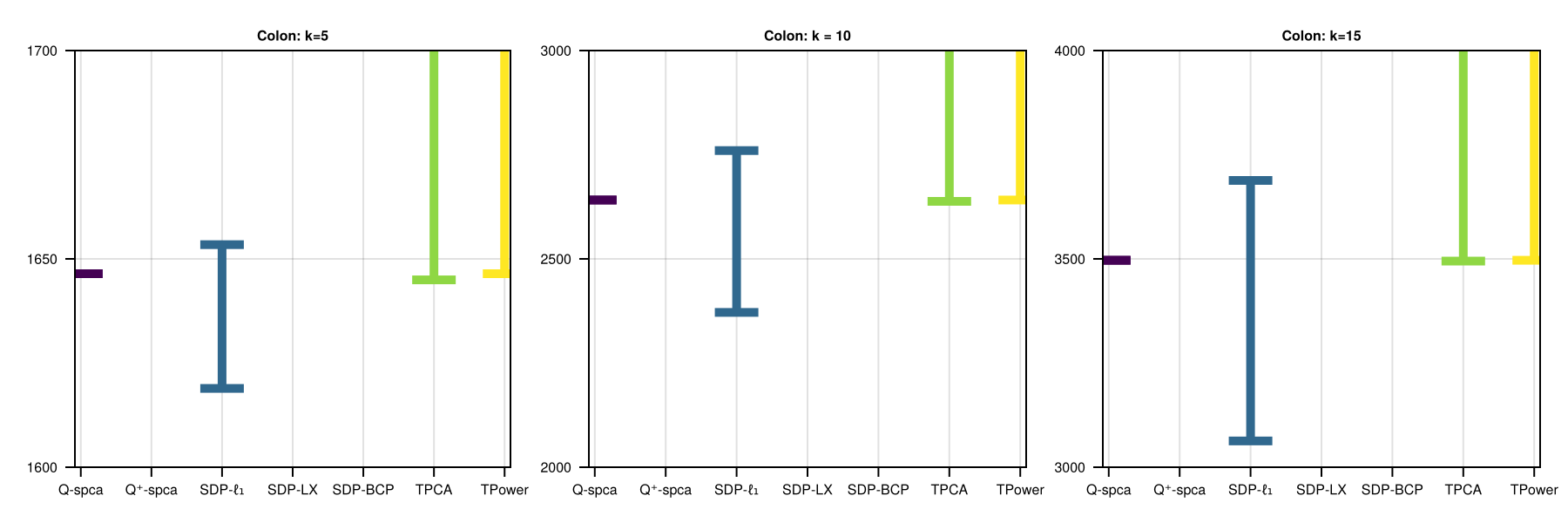}
		\caption{Comparison of upper and lower bound with Colon dataset ($n = 500$).}
		\label{fig:spca_colon}
	\end{figure}

	\subsection{Sparse Ridge Regression}
	
	We consider a synthetic dataset
	for the linear model \(y=A\bar x+\varepsilon\).
	The matrix \(A \in \RR^{m \times n}\), ground-truth vector \(\bar x\in\RR^{n}\), and noise vector \(\varepsilon\) have i.i.d. standard Gaussian entries.
	We set the size \((m,n) = (30,50)\), sparsity \(k=10\) and the noise level \(\sigma=3\) (i.e., \(\varepsilon\sim\mathcal{N}(0,\sigma^{2}I_m)\)).
	
	We compare our methods \eqref{sdp_slr} and its variant under \eqref{sparse_sdp_plus} against the following benchmarks:
	\begin{itemize}
		\item SDP relaxations based on \(\Sone{n}{k}\);
		\item  Greedy algorithm \cite{xie2020scalable};
		\item Iterative hard thresholding (IHT), and hard thresholding pursuit (HTP) \cite{yuan2018gradient}
	\end{itemize}
	
	For each regularization constant \(\lambda \in \{0.1, 0.3, 0.5\}\), we generate \(100\) independent random instances and compare the empirical CDF of the attained objective values from each method.
	For SDP-based methods, when the relaxation is not exact, we truncate the first row of the solution to obtain a \(k\)-sparse solution and report the corresponding objective value.
	
	In the plots, smaller objective value indicates better performance.  \eqref{sdp_slr} and its variant under \eqref{sparse_sdp_plus} are the best two methods. The results are shown in \Cref{fig:sparse_ridge_cdf}.
	
	\begin{figure}[ht!]
		\centering 
		\vspace{-0.35cm} 
		\subfigtopskip=2pt 
		\subfigbottomskip=2pt
		\subfigcapskip=-5pt 
		\subfigure[$\lambda = 0.1$]{
			\label{fig_slr_cdf_1}
			\includegraphics[width=0.3\linewidth]{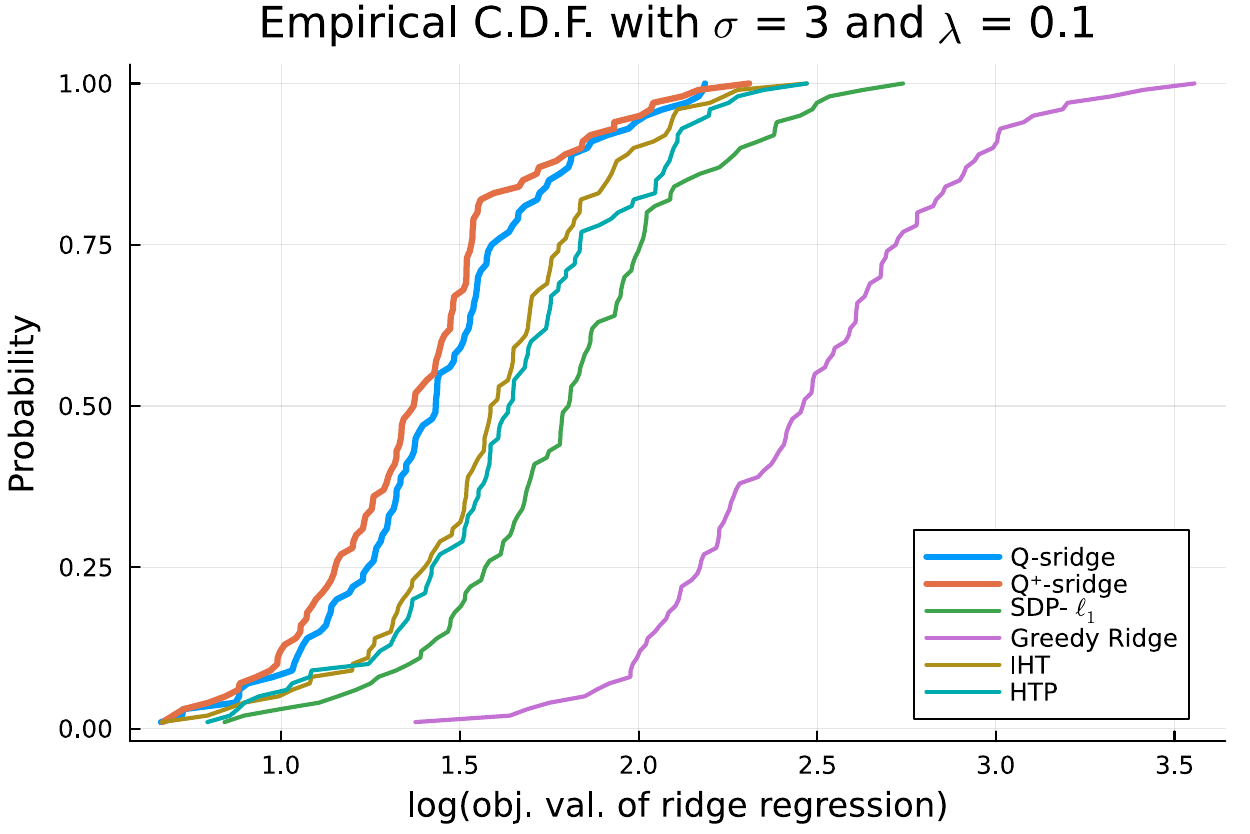}}
		\subfigure[$\lambda = 0.3$]{
			\label{fig_slr_cdf_2}
			\includegraphics[width=0.3\linewidth]{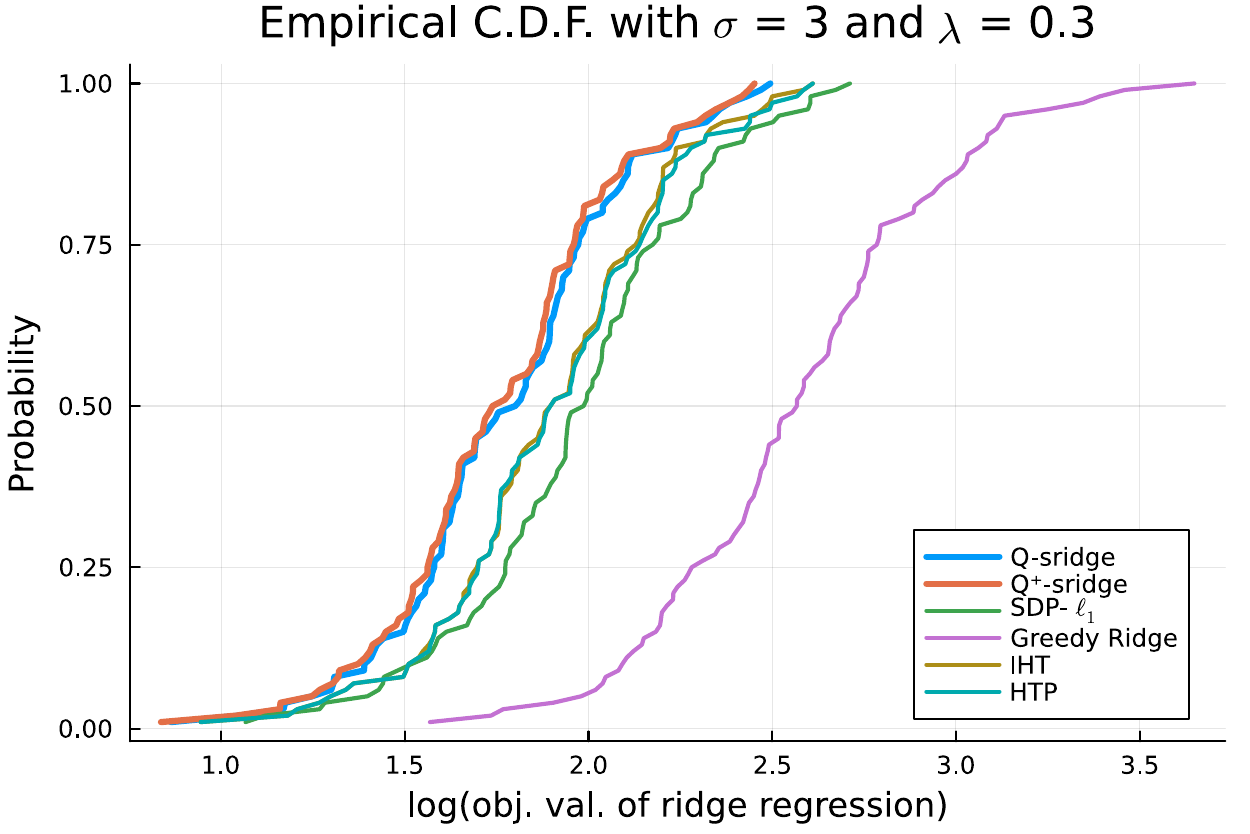}}
		\subfigure[$\lambda = 0.5$]{
			\label{fig_slr_cdf_3}
			\includegraphics[width=0.3\linewidth]{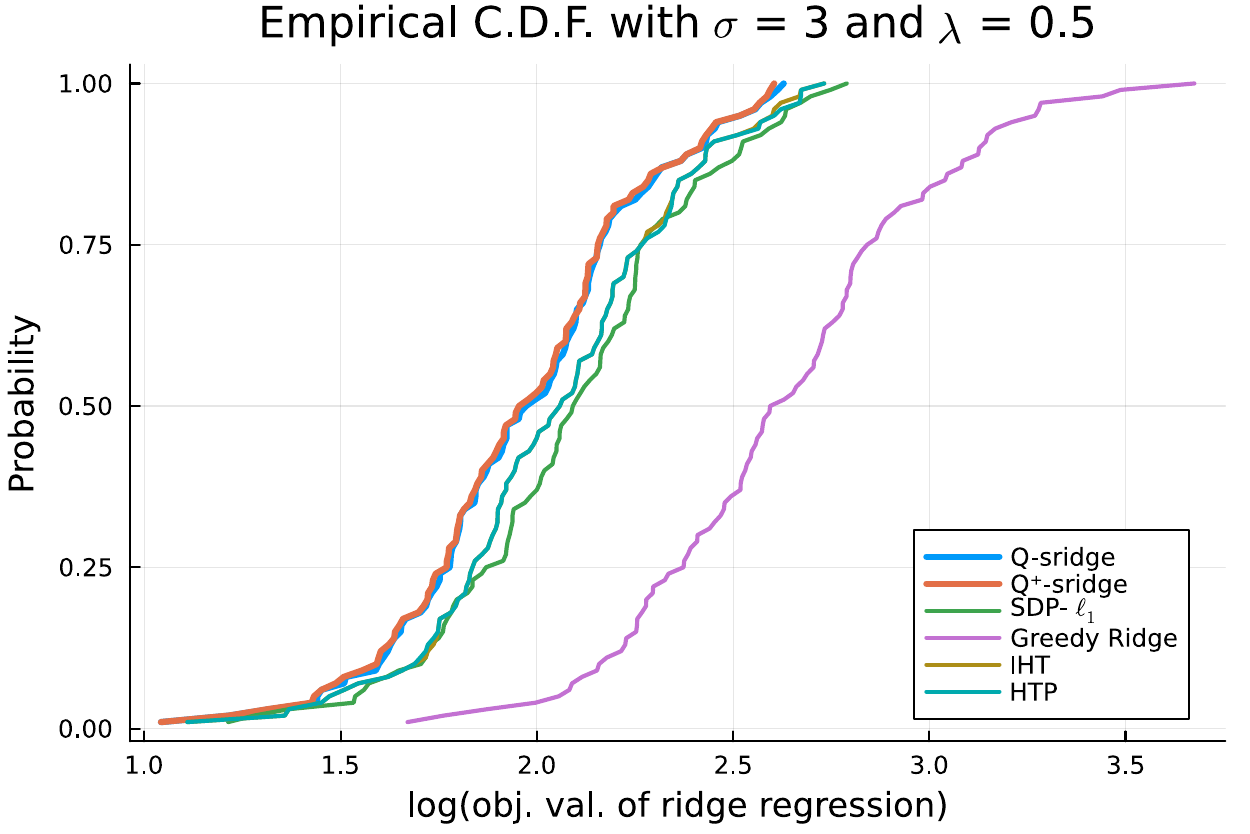}}
		
		\caption{Sparse ridge regression: empirical CDF of objective value.}
		\label{fig:sparse_ridge_cdf}
	\end{figure}

	\subsection{Restricted Isometry Property}\label{section:experiments_ric}
	As discussed in \Cref{subsection:spca_ric}, we consider the problem of determining the optimal RIP constants $\delta^-,\delta^+$. 
	We estimate the RIP constants using \eqref{sdp_spca} with $C = \pm A^T A$.
	
	We consider the synthetic dataset from \cite{baraniuk2008simple}.
	The sensing matrix \(A\in\RR^{m\times n}\) has i.i.d.\ entries, drawn from one of three distributions:
	\text{(i) Gaussian:} 
	\begin{equation}\label{eq:rip_distribution_normal}
		a_{ij}\sim \mathcal{N}(0,1/m),    
	\end{equation}
	\text{(ii) Symmetric Bernoulli (scaled):}
	\small{
    \begin{equation}\label{eq:rip_distribution_bern2}
		a_{ij}=\begin{cases}
			+1/\sqrt{m} & \text{with probability } \tfrac{1}{2},\\[2pt]
			-1/\sqrt{m} & \text{with probability } \tfrac{1}{2},
		\end{cases}
	\end{equation}
	\text{(iii) Sparsified symmetric Bernoulli (three-point):}
	}
    \small{
    \begin{equation}\label{eq:rip_distribution_bern3}
		a_{ij}=\begin{cases}
			+1/\sqrt{m} & \text{with probability } \tfrac{1}{6},\\[2pt]
			0            & \text{with probability } \tfrac{2}{3},\\[2pt]
			-1/\sqrt{m} & \text{with probability } \tfrac{1}{6}.
		\end{cases}
	\end{equation}
	}
    
	We compare the performance of Q-spca, \(\text{Q}^+\)-spca, and SDP-\(\ell_1\).
	We fix \((n,m)=(80,30)\) and \(k=10\), and, for each of the three entry distributions, generate \(100\) independent random instances.
	For every instance and each method, we compute an upper bound on the restricted isometry constant (RIC) and plot the empirical CDF across the trials.
	As shown in \Cref{fig:rip}, \(\text{Q}^+\)-spca yields tighter bounds than those based on  Q-spca or SDP-\(\ell_1\).
	
	\begin{figure}[ht!]
		\centering 
		\vspace{-0.35cm} 
		\subfigtopskip=2pt 
		\subfigbottomskip=2pt
		\subfigcapskip=-5pt 
		\subfigure[Normal: \eqref{eq:rip_distribution_normal}]{
			\label{normal}
			\includegraphics[width=0.3\linewidth]{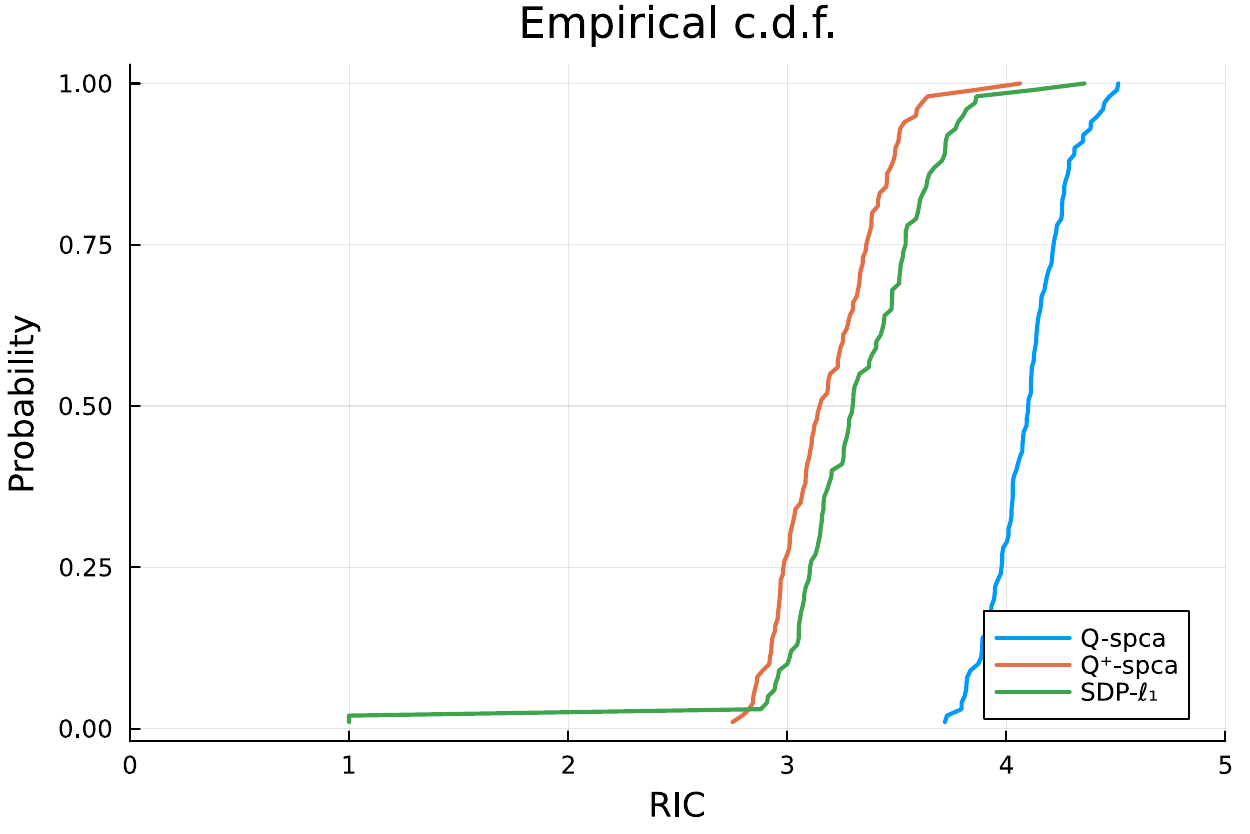}}
		\quad 
		\subfigure[Bernoulli: \eqref{eq:rip_distribution_bern2}]{
			\label{bern_2}
			\includegraphics[width=0.3\linewidth]{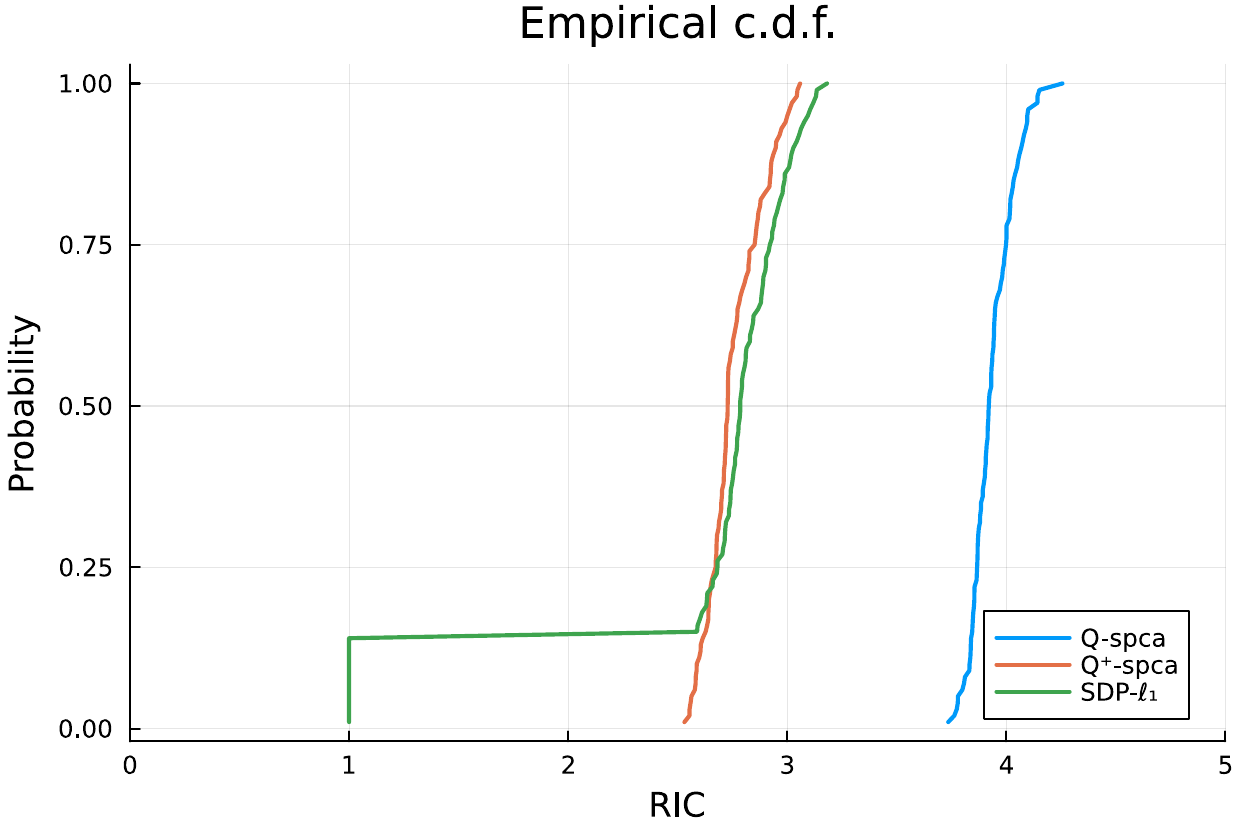}}
		\subfigure[Bernoulli: \eqref{eq:rip_distribution_bern3}]{
			\label{bern_3}
			\includegraphics[width=0.3\linewidth]{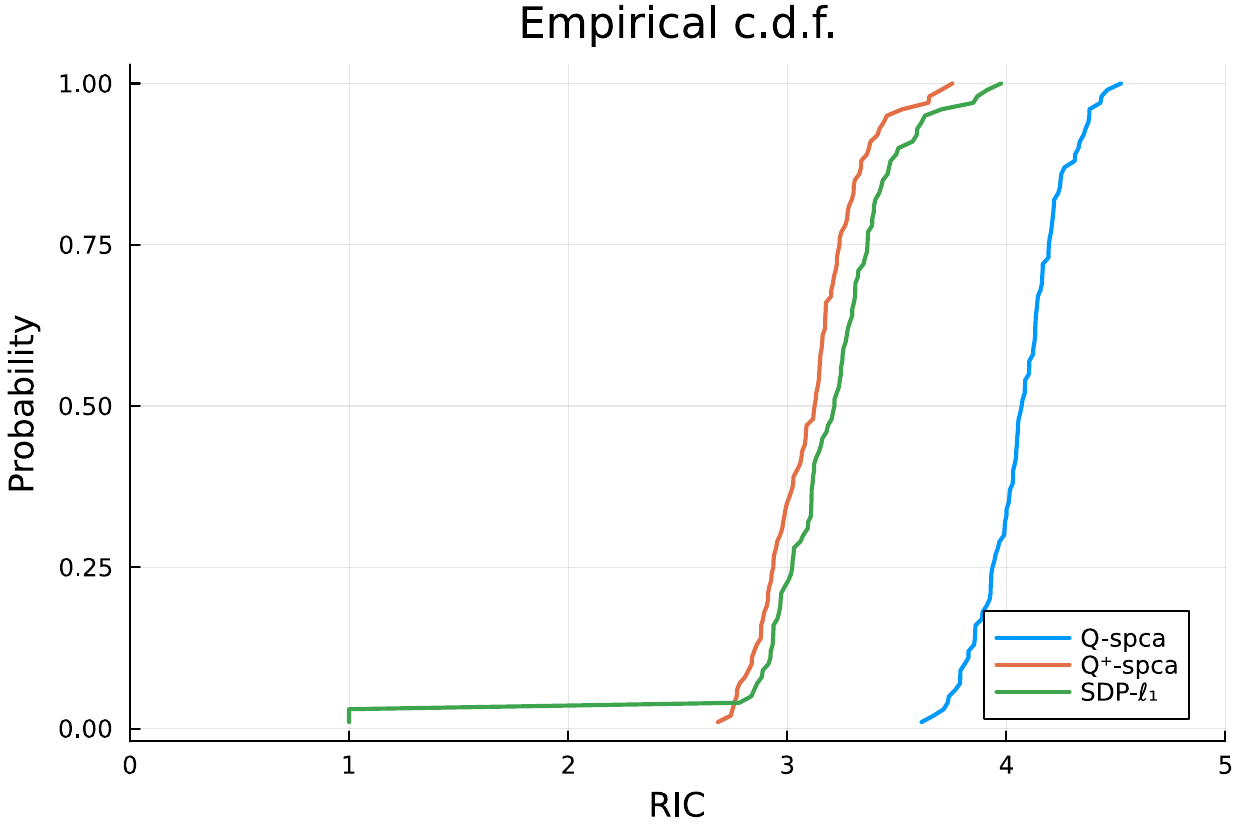}}
		
		\caption{Empirical CDF of restricted isometry constant $m=80, n = 30, k=10$}
		\label{fig:rip}
	\end{figure}
	
	\subsection{Sparse Canonical Correlation Analysis}
	In order to extract the interpretable correlation of two groups of data, sparse canonical correlation analysis problem\cite{chu2013sparse,hardoon2011sparse,li2024sparse} is considered as follow,
	\begin{equation}
		\tag{\small{s-cca}}\label{sparse_cca}
		\begin{aligned}
			\max_{u,v} \quad  u^T\Sigma_{xy}v
			\quad\text{ s.t. }\quad u^T\Sigma_{xx}u\leq 1, v^T\Sigma_{yy}v\leq 1,
			\|u\|_0 \leq k_1, \|v\|_0 \leq k_2
		\end{aligned}
	\end{equation}
	
	For the above problem, \eqref{sparse_sdp} can be formulated as
	\begin{equation}\label{sdp_scca}
		\tag{\small{Q-scca}}
		\begin{aligned}
			\max_{X\in \mathbb{S}^n} \quad & C \bullet X\\
			\quad\text{ s.t. }\quad
			&X_{11} \bullet \Sigma_{xx} \leq 1,\quad
			k_1\diag(X_{11}) - X_{11} \succeq 0\\
			&X_{22}\bullet \Sigma_{yy} \leq 1,\quad
			k_2\diag(X_{22}) - X_{22} \succeq 0\\
			&X := \begin{pmatrix}
				X_{11} & X_{12}\\
				X_{21} & X_{22}
			\end{pmatrix}\succeq 0
		\end{aligned}
	\end{equation}
	where $C = \begin{pmatrix}
		0 & \frac{1}{2}\Sigma_{xy}\\
		\frac{1}{2}\Sigma_{xy}^T&0
	\end{pmatrix}.$
	
	We randomly generate covariance matrix $\Sigma = \begin{pmatrix}
		\Sigma_{xx} &\Sigma_{xy}\\
		\Sigma_{xy}^T & \Sigma_{yy}
	\end{pmatrix}$ with the following two structure,
	
	(i) Spiked covariance models:
	\begin{align*}
		\Sigma_{xx} = I_{n_1} + XX^T,\quad
		\Sigma_{yy} = I_{n_2} + YY^T,\quad
		\Sigma_{xy} = \rho \Sigma_{xx}uv^T\Sigma_{yy}
	\end{align*}
	where entries of $X\in\RR^{n_1\times n_1}$, $Y\in\RR^{n_2\times n_2}$ are i.i.d. standard Gaussian random variables.
	
	(ii) Toeplitz matrices:
	\begin{align*}
		\Sigma_{xx} &= (\sigma_{ij}),~\text{where}~\sigma_{ij}=r_1^{|i-j|}, \forall i,j\in[n_1]\\
		\Sigma_{yy} &= (\sigma'_{ij}),~\text{where}~\sigma'_{ij}=r_2^{|i-j|}, \forall i,j\in[n_2]\\
		\Sigma_{xy} &= \rho \Sigma_{xx}uv^T\Sigma_{yy}
	\end{align*}
	where $r_1,r_2$ are constants.
	
	In both of those two models, $\rho$ is a random variable with uniform distribution from 0 to 1. We randomly generate two vector \(\bar u\in\RR^{n_1},\bar v\in\RR^{n_2}\) whose entries are i.i.d. standard Gaussian. Then we truncate and normalize $\bar u, \bar v$ to generate $u,v$ such that $\|u\|_0=k_1$, $\|v\|_0=k_2$, $u^T \Sigma_{xx} u =1$ and $v^T \Sigma_{yy} v=1$. We randomly sample $m = 3000$ vectors from $\mathcal{N}(0,\Sigma)$ to generate empirical covariance matrix. 
	
	We set \((n_1,k_1;n_2,k_2) = (25,5;20,5)\), and \(r_1=r_2=0.7\). We generate \(100\) independent random instances and compare Q-scca, \(\text{Q}^+\)-scca, and SDP-\(\ell_1\).
	For each instance and method, we take the relaxation’s optimal objective value as an upper bound and the objective value of the rounded sparse solution as a lower bound, and plot the empirical CDF of (i) the ratio of upper to lower bound; (ii) objective value of the rounded sparse solution across trials. As shown in \Cref{fig:scca_ratio},  both of Q-scca, \(\text{Q}^+\)-scca outperform SDP-\(\ell_1\).
	
	\begin{figure}[ht]
		\centering 
		\vspace{-0.35cm} 
		\subfigtopskip=2pt 
		\subfigbottomskip=2pt
		\subfigcapskip=-5pt 
		\subfigure[Spiked covariance model: empirical CDF of ratio of upper to lower bound.]{
			\label{fig_scca_ratio_spike}
			\includegraphics[width=0.4\linewidth]{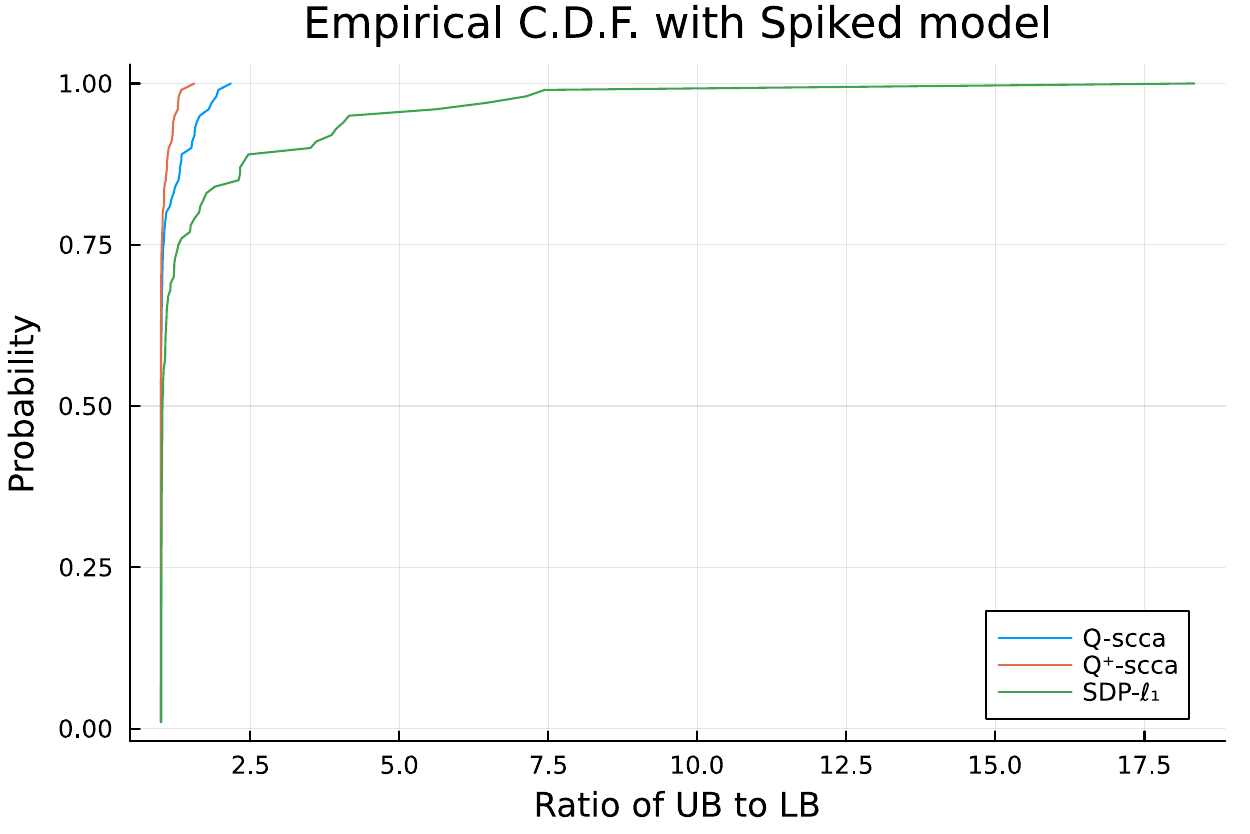}}
		\quad 
		\subfigure[Spiked covariance model: empirical CDF of objective value of sparse solution.]{
			\label{fig_scca_ratio_2}
			\includegraphics[width=0.4\linewidth]{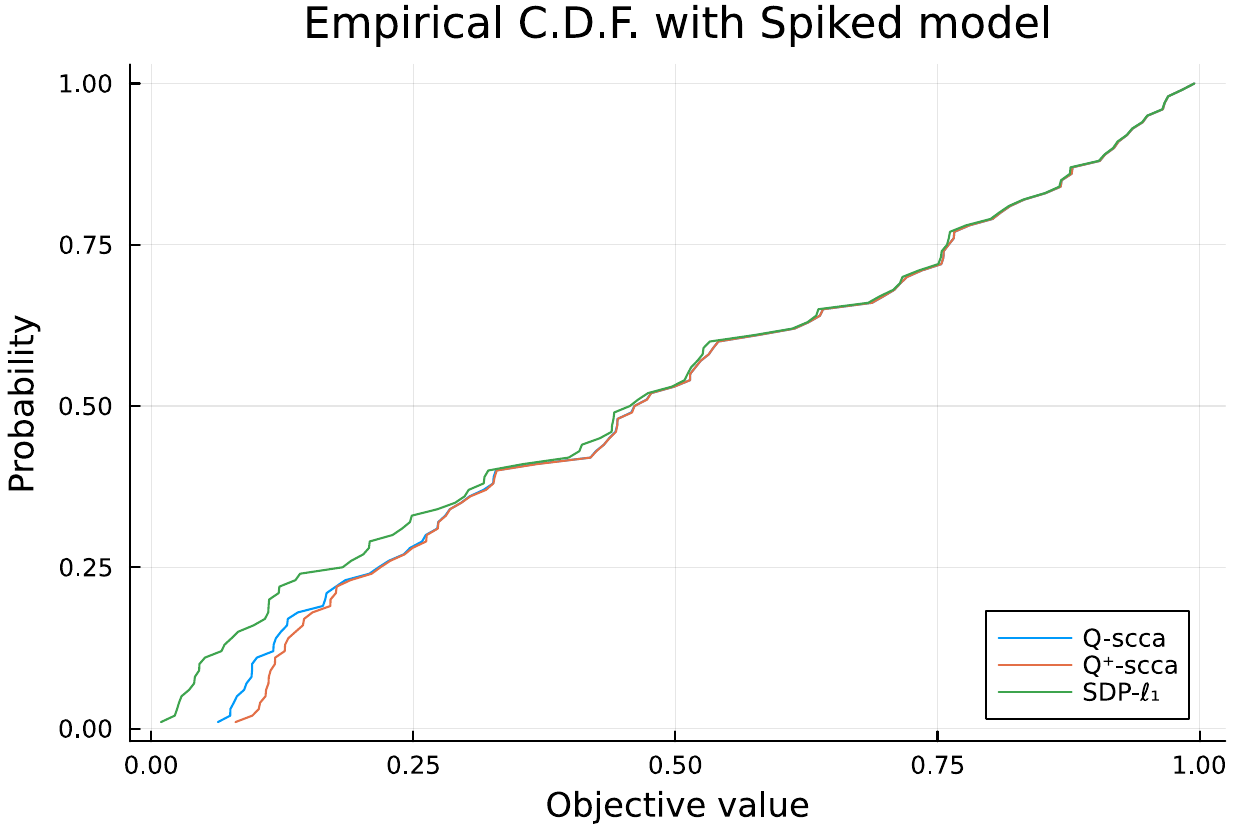}}
		
		\subfigure[Toeplitz covariance model: empirical CDF of ratio of upper to lower bound.]{
			\label{fig_scca_ratio_3}
			\includegraphics[width=0.4\linewidth]{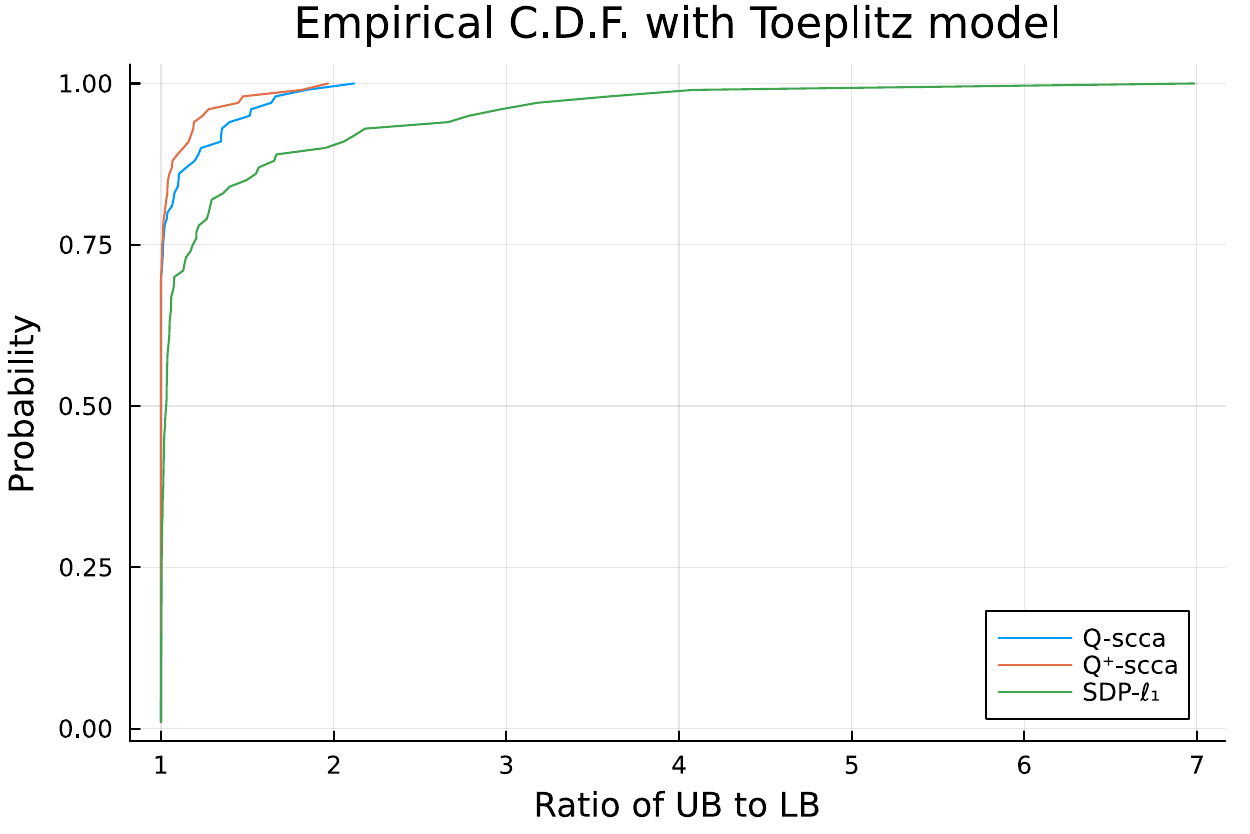}}
		\quad
		\subfigure[Toeplitz covariance model: empirical CDF of objective value of sparse solution.]{
			\label{fig_scca_ratio_4}
			\includegraphics[width=0.4\linewidth]{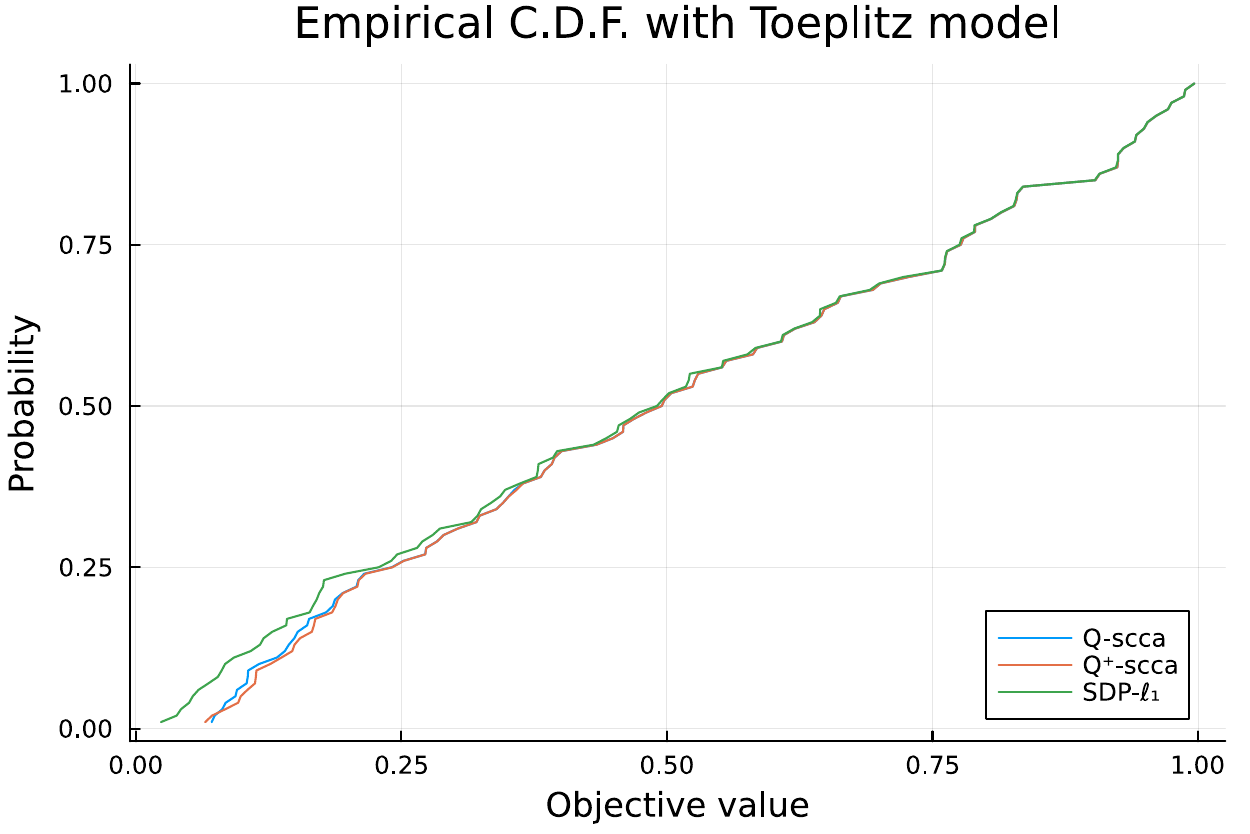}}
		
		\caption{SCCA: empirical CDF.}
		\label{fig:scca_ratio}
	\end{figure}

    \section*{Declarations}
    
    \subsection*{Funding}
    This work was partially supported by the Office of Naval Research, N00014-23-1-2631. 
    
    \subsection*{Competing interests}
    The authors have no competing interests to declare that are relevant to the content of this article.
	
	\bibliographystyle{abbrv}
	\bibliography{ref.bib}

@article{d2004direct,
  title={A direct formulation for sparse PCA using semidefinite programming},
  author={d'Aspremont, Alexandre and Ghaoui, Laurent and Jordan, Michael and Lanckriet, Gert},
  journal={Advances in neural information processing systems},
  volume={17},
  year={2004}
}

@article{journee2010generalized,
  title={Generalized power method for sparse principal component analysis.},
  author={Journ{\'e}e, Michel and Nesterov, Yurii and Richt{\'a}rik, Peter and Sepulchre, Rodolphe},
  journal={Journal of Machine Learning Research},
  volume={11},
  number={2},
  year={2010}
}

@article{luss2013conditional,
  title={Conditional Gradient Algorithms for Rank-One Matrix Approximations with a Sparsity Constraint},
  author={Luss, Ronny and Teboulle, Marc},
  journal={SIAM Review},
  volume={55},
  number={1},
  pages={65},
  year={2013},
  publisher={Society for Industrial and Applied Mathematics}
}

@article{hein2010inverse,
  title={An inverse power method for nonlinear eigenproblems with applications in 1-spectral clustering and sparse PCA},
  author={Hein, Matthias and B{\"u}hler, Thomas},
  journal={Advances in neural information processing systems},
  volume={23},
  year={2010}
}

@article{yuan2013truncated,
  title={Truncated Power Method for Sparse Eigenvalue Problems.},
  author={Yuan, Xiao-Tong and Zhang, Tong},
  journal={Journal of Machine Learning Research},
  volume={14},
  number={4},
  year={2013}
}

@article{moghaddam2005spectral,
  title={Spectral bounds for sparse PCA: Exact and greedy algorithms},
  author={Moghaddam, Baback and Weiss, Yair and Avidan, Shai},
  journal={Advances in neural information processing systems},
  volume={18},
  year={2005}
}

@article{berk2019certifiably,
  title={Certifiably optimal sparse principal component analysis},
  author={Berk, Lauren and Bertsimas, Dimitris},
  journal={Mathematical Programming Computation},
  volume={11},
  pages={381--420},
  year={2019},
  publisher={Springer}
}

@article{gally2018framework,
  title={A framework for solving mixed-integer semidefinite programs},
  author={Gally, Tristan and Pfetsch, Marc E and Ulbrich, Stefan},
  journal={Optimization Methods and Software},
  volume={33},
  number={3},
  pages={594--632},
  year={2018},
  publisher={Taylor \& Francis}
}

@article{li2025exact,
  title={Exact and approximation algorithms for sparse principal component analysis},
  author={Li, Yongchun and Xie, Weijun},
  journal={INFORMS Journal on Computing},
  volume={37},
  number={3},
  pages={582--602},
  year={2025},
  publisher={INFORMS}
}

@article{bertsimas2022solving,
  title={Solving large-scale sparse PCA to certifiable (near) optimality},
  author={Bertsimas, Dimitris and Cory-Wright, Ryan and Pauphilet, Jean},
  journal={Journal of Machine Learning Research},
  volume={23},
  number={13},
  pages={1--35},
  year={2022}
}

@article{d2008optimal,
  title={Optimal Solutions for Sparse Principal Component Analysis.},
  author={d'Aspremont, Alexandre and Bach, Francis and El Ghaoui, Laurent},
  journal={Journal of Machine Learning Research},
  volume={9},
  number={7},
  year={2008}
}

@article{d2014approximation,
  title={Approximation bounds for sparse principal component analysis},
  author={d’Aspremont, Alexandre and Bach, Francis and Ghaoui, Laurent El},
  journal={Mathematical Programming},
  volume={148},
  pages={89--110},
  year={2014},
  publisher={Springer}
}

@article{dey2022using,
  title={Using {$\ell$}1-Relaxation and Integer Programming to Obtain Dual Bounds for Sparse PCA.},
  author={Dey, Santanu S and Mazumder, Rahul and Wang, Guanyi},
  journal={Operations Research},
  volume={70},
  number={3},
  year={2022}
}

@article{hardoon2011sparse,
  title={Sparse canonical correlation analysis},
  author={Hardoon, David R and Shawe-Taylor, John},
  journal={Machine Learning},
  volume={83},
  pages={331--353},
  year={2011},
  publisher={Springer}
}

@article{chu2013sparse,
  title={Sparse canonical correlation analysis: New formulation and algorithm},
  author={Chu, Delin and Liao, Li-Zhi and Ng, Michael K and Zhang, Xiaowei},
  journal={IEEE transactions on pattern analysis and machine intelligence},
  volume={35},
  number={12},
  pages={3050--3065},
  year={2013},
  publisher={IEEE}
}

@book{vershynin2018high,
  title={High-dimensional probability: An introduction with applications in data science},
  author={Vershynin, Roman},
  volume={47},
  year={2018},
  publisher={Cambridge university press}
}

@article{baraniuk2008simple,
  title={A simple proof of the restricted isometry property for random matrices},
  author={Baraniuk, Richard and Davenport, Mark and DeVore, Ronald and Wakin, Michael},
  journal={Constructive approximation},
  volume={28},
  pages={253--263},
  year={2008},
  publisher={Springer}
}

@article{cifuentes2022local,
  title={On the local stability of semidefinite relaxations},
  author={Cifuentes, Diego and Agarwal, Sameer and Parrilo, Pablo A and Thomas, Rekha R},
  journal={Mathematical Programming},
  pages={1--35},
  publisher={Springer}
}

@article{tropp2007signal,
  title={Signal recovery from random measurements via orthogonal matching pursuit},
  author={Tropp, Joel A and Gilbert, Anna C},
  journal={IEEE Transactions on information theory},
  volume={53},
  number={12},
  pages={4655--4666},
  year={2007},
  publisher={IEEE}
}

@article{bahmani2013greedy,
  title={Greedy sparsity-constrained optimization},
  author={Bahmani, Sohail and Raj, Bhiksha and Boufounos, Petros T},
  journal={The Journal of Machine Learning Research},
  volume={14},
  number={1},
  pages={807--841},
  year={2013},
  publisher={JMLR. org}
}

@article{tropp2004greed,
  title={Greed is good: Algorithmic results for sparse approximation},
  author={Tropp, Joel A},
  journal={IEEE Transactions on Information theory},
  volume={50},
  number={10},
  pages={2231--2242},
  year={2004},
  publisher={IEEE}
}

@article{xie2020scalable,
  title={Scalable algorithms for the sparse ridge regression},
  author={Xie, Weijun and Deng, Xinwei},
  journal={SIAM Journal on Optimization},
  volume={30},
  number={4},
  pages={3359--3386},
  year={2020},
  publisher={SIAM}
}

@article{yuan2018gradient,
  title={Gradient hard thresholding pursuit},
  author={Yuan, Xiao-Tong and Li, Ping and Zhang, Tong},
  journal={Journal of Machine Learning Research},
  volume={18},
  number={166},
  pages={1--43},
  year={2018}
}

@article{bertsimas2009algorithm,
  title={Algorithm for cardinality-constrained quadratic optimization},
  author={Bertsimas, Dimitris and Shioda, Romy},
  journal={Computational Optimization and Applications},
  volume={43},
  number={1},
  pages={1--22},
  year={2009},
  publisher={Springer}
}

@article{zheng2014improving,
  title={Improving the performance of MIQP solvers for quadratic programs with cardinality and minimum threshold constraints: A semidefinite program approach},
  author={Zheng, Xiaojin and Sun, Xiaoling and Li, Duan},
  journal={INFORMS Journal on Computing},
  volume={26},
  number={4},
  pages={690--703},
  year={2014},
  publisher={INFORMS}
}

@article{bertsimas2020sparse,
  title={Sparse high-dimensional regression},
  author={Bertsimas, Dimitris and Van Parys, Bart},
  journal={The Annals of Statistics},
  volume={48},
  number={1},
  pages={300--323},
  year={2020},
  publisher={JSTOR}
}

@article{bertsimas2021sparse,
  title={Sparse convex regression},
  author={Bertsimas, Dimitris and Mundru, Nishanth},
  journal={INFORMS Journal on Computing},
  volume={33},
  number={1},
  pages={262--279},
  year={2021},
  publisher={INFORMS}
}

@article{atamturk2025rank,
  title={Rank-one convexification for sparse regression},
  author={Atamturk, Alper and Gomez, Andres},
  journal={Journal of Machine Learning Research},
  volume={26},
  number={35},
  pages={1--50},
  year={2025}
}

@inproceedings{chan2007direct,
  title={Direct convex relaxations of sparse SVM},
  author={Chan, Antoni B and Vasconcelos, Nuno and Lanckriet, Gert RG},
  booktitle={Proceedings of the 24th international conference on Machine learning},
  pages={145--153},
  year={2007}
}

@article{bienstock1996computational,
  title={Computational study of a family of mixed-integer quadratic programming problems},
  author={Bienstock, Daniel},
  journal={Mathematical programming},
  volume={74},
  number={2},
  pages={121--140},
  year={1996},
  publisher={Springer}
}

@article{natarajan1995sparse,
  title={Sparse approximate solutions to linear systems},
  author={Natarajan, Balas Kausik},
  journal={SIAM journal on computing},
  volume={24},
  number={2},
  pages={227--234},
  year={1995},
  publisher={SIAM}
}

@article{sturm2003cones,
  title={On cones of nonnegative quadratic functions},
  author={Sturm, Jos F and Zhang, Shuzhong},
  journal={Mathematics of Operations research},
  volume={28},
  number={2},
  pages={246--267},
  year={2003},
  publisher={INFORMS}
}

@article{nemirovski1999maximization,
  title={On maximization of quadratic form over intersection of ellipsoids with common center},
  author={Nemirovski, Arkadi and Roos, Cornelis and Terlaky, Tam{\'a}s},
  journal={Mathematical programming},
  volume={86},
  number={3},
  pages={463--473},
  year={1999},
  publisher={Springer}
}

@article{pong2014generalized,
  title={The generalized trust region subproblem},
  author={Pong, Ting Kei and Wolkowicz, Henry},
  journal={Computational optimization and applications},
  volume={58},
  number={2},
  pages={273--322},
  year={2014},
  publisher={Springer}
}

@book{ben2001lectures,
  title={Lectures on modern convex optimization: analysis, algorithms, and engineering applications},
  author={Ben-Tal, Aharon and Nemirovski, Arkadi},
  year={2001},
  publisher={SIAM}
}

@article{bao2011semidefinite,
  title={Semidefinite relaxations for quadratically constrained quadratic programming: A review and comparisons},
  author={Bao, Xiaowei and Sahinidis, Nikolaos V and Tawarmalani, Mohit},
  journal={Mathematical programming},
  volume={129},
  number={1},
  pages={129--157},
  year={2011},
  publisher={Springer}
}

@article{pardalos1991quadratic,
  title={Quadratic programming with one negative eigenvalue is NP-hard},
  author={Pardalos, Panos M and Vavasis, Stephen A},
  journal={Journal of Global optimization},
  volume={1},
  number={1},
  pages={15--22},
  year={1991},
  publisher={Springer}
}

@inproceedings{aholt2012qcqp,
  title={A QCQP approach to triangulation},
  author={Aholt, Chris and Agarwal, Sameer and Thomas, Rekha},
  booktitle={European Conference on Computer Vision},
  pages={654--667},
  year={2012},
  organization={Springer}
}

@article{luo2010semidefinite,
  title={Semidefinite relaxation of quadratic optimization problems},
  author={Luo, Zhi-Quan and Ma, Wing-Kin and So, Anthony Man-Cho and Ye, Yinyu and Zhang, Shuzhong},
  journal={IEEE Signal Processing Magazine},
  volume={27},
  number={3},
  pages={20--34},
  year={2010},
  publisher={IEEE}
}

@article{cifuentes2021convex,
  title={A convex relaxation to compute the nearest structured rank deficient matrix},
  author={Cifuentes, Diego},
  journal={SIAM Journal on Matrix Analysis and Applications},
  volume={42},
  number={2},
  pages={708--729},
  year={2021},
  publisher={SIAM}
}

@article{haemers2022spectral,
  title={Spectral symmetry in conference matrices},
  author={Haemers, Willem H and Parsaei Majd, Leila},
  journal={Designs, Codes and Cryptography},
  volume={90},
  number={9},
  pages={1983--1990},
  year={2022},
  publisher={Springer}
}

@article{koltchinskii2017normal,
  title={NORMAL APPROXIMATION AND CONCENTRATION OF SPECTRAL PROJECTORS OF SAMPLE COVARIANCE},
  author={KOLTCHINSKII, VLADIMIR and LOUNICI, KARIM},
  journal={The Annals of Statistics},
  volume={45},
  number={1},
  pages={121--157},
  year={2017}
}

@article{candes2008restricted,
  title={The restricted isometry property and its implications for compressed sensing},
  author={Candes, Emmanuel J},
  journal={Comptes rendus. Mathematique},
  volume={346},
  number={9-10},
  pages={589--592},
  year={2008}
}

@article{yu2015useful,
  title={A useful variant of the Davis--Kahan theorem for statisticians},
  author={Yu, Yi and Wang, Tengyao and Samworth, Richard J},
  journal={Biometrika},
  volume={102},
  number={2},
  pages={315--323},
  year={2015},
  publisher={Oxford University Press}
}

@book{bonnans2013perturbation,
  title={Perturbation analysis of optimization problems},
  author={Bonnans, J Fr{\'e}d{\'e}ric and Shapiro, Alexander},
  year={2013},
  publisher={Springer Science \& Business Media}
}

@article{blekherman2022sparse,
  title={Sparse PSD approximation of the PSD cone},
  author={Blekherman, Grigoriy and Dey, Santanu S and Molinaro, Marco and Sun, Shengding},
  journal={Mathematical Programming},
  volume={191},
  number={2},
  pages={981--1004},
  year={2022},
  publisher={Springer}
}

@article{li2024sparse,
  title={On sparse canonical correlation analysis},
  author={Li, Yongchun and Dey, Santanu S and Xie, Weijun},
  journal={Advances in Neural Information Processing Systems},
  volume={37},
  pages={10707--10734},
  year={2024}
}

@article{blekherman2022hyperbolic,
  title={Hyperbolic relaxation of k-locally positive semidefinite matrices},
  author={Blekherman, Grigoriy and Dey, Santanu S and Shu, Kevin and Sun, Shengding},
  journal={SIAM Journal on Optimization},
  volume={32},
  number={2},
  pages={470--490},
  year={2022},
  publisher={SIAM}
}
	
	\begin{appendices}
		\section{Additional proofs}\label{Appendix_proof}
		
		\subsection{Proof of \Cref{thm:special_case_n3_k2}}\label{append:thm_special_case_k2}
		We start with the special case \(n=3,k=2\).
		\begin{proof}
			(\(n=3,k=2\)). Suppose $V \in \conv(\Szero{3}{2})$
			\begin{equation}
				\begin{split}
					V &=  \begin{pmatrix}
						a & b & c\\
						b & d & h\\
						c & h & f
					\end{pmatrix}\\
					& = 
					\begin{pmatrix}
						x & b & 0\\
						b & y & 0\\
						0 & 0 & 0    
					\end{pmatrix} + 
					\begin{pmatrix}
						a - x & 0 & c\\
						0 & 0 & 0\\
						c & 0 & z
					\end{pmatrix} + 
					\begin{pmatrix}
						0 & 0 & 0\\
						0 & d-y & h\\
						0 & h & f-z    
					\end{pmatrix}
				\end{split}
			\end{equation}
			with boundary
			\begin{equation}\label{boundary1}
				b^2 = xy,\quad c^2 =  (a-x)z,\quad h^2 =  (d-y)(f-z)
			\end{equation}
			where $0\leq x\leq a$, $0\leq y\leq d$, $0\leq z\leq f$.
			Then we change variables,
			\[
			x = a\sin^2\alpha,\quad y = d\sin^2\beta,\quad z = f\sin^2\gamma
			\]
			which implies that \eqref{boundary1} is equivalent to 
			\begin{equation}\label{boundary2}
				b^2 = ad\sin^2\alpha\sin^2\beta,\quad c^2 = af\cos^2\alpha\sin^2\gamma,\quad h^2 = df\cos^2\beta \cos^2\gamma
			\end{equation}
			where $\alpha,\beta,\gamma \in \RR$. In order to derive the analytical expression for \eqref{boundary2}, we consider the critical points where the determinant of Jacobian matrix is 0. The expression for \eqref{boundary2} is as follows,
			\begin{equation}\label{convex_hull_bd}
				\sin^2\alpha\cos^2\beta\cos^2\gamma = \cos^2\alpha\sin^2\beta\sin^2\gamma
			\end{equation}
			The boundary for $V\in S^d_{n,k}$ is 
			\begin{equation}\label{diag_bd}
				\begin{split}
					&\det(2\diag(V) - V) = adf - ah^2 - dc^2 - fb^2 - 2bch = 0,~~\text{when}~~ bch>0 \\
					&\det(V) =  adf - ah^2 - dc^2 - fb^2 + 2bch = 0,~~\text{when}~~ bch<0 
				\end{split}
			\end{equation}
			Then we are going to show the boundary of $\conv(Q_{3,2})$ \eqref{convex_hull_bd} and $\Szero{3}{2}$\eqref{diag_bd} are equivalent.
			
			Let $A = \sin^2\alpha\cos^2\beta\cos^2\gamma$ and $B = \cos^2\alpha\sin^2\beta\sin^2\gamma$. Then from \eqref{boundary2}, we have 
			\begin{equation}\label{eq:1}
				\begin{split}
					ah^2 + dc^2 + fb^2 &= adf\big(\sin^2\alpha\sin^2\beta + \cos^2\beta\sin^2\gamma + \cos^2 \alpha \cos^2\gamma\big)\\
					&= adf(1-2\cos^2\alpha\sin^2\beta\sin^2\gamma)\\
					& = adf(1-2B)
				\end{split}
			\end{equation}
			And
			\begin{equation}
				(bch)^2 = (adf)^2 AB =  (adfB)^2 
			\end{equation}
			which means 
			\begin{equation}\label{eq:2}
				bch = 
				\begin{cases}
					adfB, &~~\text{if}~~bch>0\\
					-adfB, &~~\text{if}~~ bch<0
				\end{cases}
			\end{equation}
			Thus, from \eqref{eq:1} and \eqref{eq:2}, it is straightforward that \eqref{diag_bd} is equivalent to $\eqref{convex_hull_bd}$.
		\end{proof}
		
		Then we move the case when \(n>k=2\). We introduce the following definition and lemma which characterizes \(\conv(Q_{n,2})\).
		\begin{dfn}
			A symmetric matrix \(X \in \SS^n\) is diagonally dominant if 
			\[
			x_{ii} \geq \sum_{j\neq i}|x_{ij}|,\quad\forall i\in [n].
			\]
			A symmetric matrix \(X\in \SS^n\) is scaled diagonal dominant if there exists a positive diagonal matrix \(D\in\RR^{n\times n}\) such that \(DXD\) is diagonally dominant, i.e., there exists a positive vector $d\in\RR^n_+$ such that
			\[
			x_{ii}d_i \geq  \sum_{j\neq i}d_j|x_{ij}|,\quad\forall i\in [n].
			\]
		\end{dfn}
		
		\begin{lem}\label{lem:sdd_factorization}
			A symmetric matrix \(X\in \SS^n\) is scaled diagonal dominant if and only if it has a factorization, \(X = VV^T\) where each column of $V$ has at most 2 nonzero entries.
		\end{lem}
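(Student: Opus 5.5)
Both directions will be proved by connecting the factorization to the classical description of diagonally dominant matrices as sums of rank-one matrices supported on at most two coordinates. The scaling $D$ is then absorbed into the factors.

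\emph{($\Leftarrow$)} Suppose $X = VV^T = \sum_\ell v_\ell v_\ell^T$, where each column $v_\ell$ has support of size at most $2$. Each summand is PSD and is either a multiple of some $e_ie_i^T$ or a $2\times 2$ PSD block on the coordinates $\{i,j\}$,
\[
v_\ell v_\ell^T = \begin{pmatrix} p^2 & pq\\ pq & q^2\end{pmatrix}\quad\text{on } \{i,j\}.
\]
For a block with $p,q\neq 0$, set $d_i=1/|p|$ and $d_j=1/|q|$. Then $D v_\ell v_\ell^T D$ restricted to $\{i,j\}$ is the all-ones matrix up to signs, hence diagonally dominant with equality. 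The difficulty is that different blocks require different scalings, so a single $d$ need not work for the sum. I will therefore not scale block by block, and will use the dual characterization instead. A symmetric matrix with nonnegative diagonal is scaled diagonally dominant if and only if its comparison matrix $M(X)$ (diagonal $x_{ii}$, off-diagonal $-|x_{ij}|$) is PSD. This is the standard H-matrix/M-matrix fact: a singular M-matrix admits a nonnegative vector $d$ with $M(X)d\ge 0$, and strict positivity of $d$ is obtained by a perturbation or block argument on irreducible components. Each $2\times2$ block has comparison matrix $\begin{pmatrix}p^2&-|pq|\\-|pq|&q^2\end{pmatrix}\succeq 0$. The map $X\mapsto M(X)$ is superadditive in the PSD sense on sums, since $|x_{ij}|\le\sum_\ell|(v_\ell)_i(v_\ell)_j|$. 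Hence
\[
M(X)\succeq\sum_\ell M(v_\ell v_\ell^T)\succeq0,
\]
which gives $M(X)\succeq 0$ and therefore the scaling.

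\emph{($\Rightarrow$)} Suppose $DXD=Y$ is diagonally dominant. Decompose $Y$ explicitly as
\[
Y=\sum_{i<j}|y_{ij}|\,(e_i+s_{ij}e_j)(e_i+s_{ij}e_j)^T+\sum_i\Big(y_{ii}-\sum_{j\ne i}|y_{ij}|\Big)e_ie_i^T,
\]
where $s_{ij}=\operatorname{sign}(y_{ij})$. All coefficients are nonnegative by diagonal dominance. This writes $Y=WW^T$ with each column of $W$ having at most two nonzeros. Setting $V=D^{-1}W$ preserves the supports, which proves this direction.

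The main obstacle is the reverse direction: passing from $M(X)\succeq 0$ to a strictly positive $d$ with $x_{ii}d_i\ge\sum_{j\ne i}|x_{ij}|d_j$ in the singular case. My plan is to first handle irreducible $M(X)$ by taking $d$ to be the Perron vector of $\big(\max_i x_{ii}\big)I-M(X)$, which is positive by Perron--Frobenius and yields $M(X)d=\lambda_{\min}(M(X))\,d\ge0$. The reducible case then follows by applying this argument to each irreducible block; zero rows are trivially dominated and any positive value may be assigned to $d_i$ there.
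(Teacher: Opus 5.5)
The gap is in your ($\Leftarrow$) direction. The inequality
\[
M(X)\succeq\sum_\ell M(v_\ell v_\ell^T)
\]
is false in the Loewner order. The bound $|x_{ij}|\le\sum_\ell|(v_\ell)_i(v_\ell)_j|$ only says that the difference $M(X)-\sum_\ell M(v_\ell v_\ell^T)$ has zero diagonal and entrywise nonnegative off-diagonal entries. Such a matrix has trace zero, so it is indefinite unless it vanishes, i.e.\ unless no cancellation occurs in any $x_{ij}$. Entrywise order is not Loewner order. For a concrete failure, take $v_1=e_1+e_2$ and $v_2=e_1-e_2$. Then $X=M(X)=2I_2$, while $\sum_\ell M(v_\ell v_\ell^T)=2\bigl(\begin{smallmatrix}1&-1\\-1&1\end{smallmatrix}\bigr)$, and the difference $2\bigl(\begin{smallmatrix}0&1\\1&0\end{smallmatrix}\bigr)$ has eigenvalue $-2$. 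The rest of your argument is correct: the explicit decomposition of $DXD$ in ($\Rightarrow$), and the Perron-vector construction of a positive $d$ on irreducible blocks.

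The conclusion $M(X)\succeq 0$ is still true, and there are two easy repairs.

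\emph{Repair (a): group columns by support.} Since every column has at most two nonzeros, an off-diagonal entry $x_{ij}$ receives contributions only from columns with support exactly $\{i,j\}$. Set $B^{(ij)}=\sum_{\supp v_\ell=\{i,j\}}v_\ell v_\ell^T$ and $\Lambda=\sum_{|\supp v_\ell|\le 1}v_\ell v_\ell^T$. Then $X=\Lambda+\sum_{i<j}B^{(ij)}$ with $B^{(ij)}_{ij}=x_{ij}$, which gives the exact identity $M(X)=\Lambda+\sum_{i<j}M(B^{(ij)})$. Each $M(B^{(ij)})$ is a $2\times 2$ block with the same diagonal and determinant as the PSD block $B^{(ij)}$, so it is PSD.

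\emph{Repair (b): use Z-matrix monotonicity.} Let $A=\sum_\ell M(v_\ell v_\ell^T)\succeq 0$ and $s\ge\max_i x_{ii}$. Then $0\le sI-M(X)\le sI-A$ entrywise, so Perron--Frobenius gives $\rho(sI-M(X))\le\rho(sI-A)=s-\lambda_{\min}(A)\le s$. Hence $\lambda_{\min}(M(X))\ge 0$.

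With either repair your proof is complete. The paper itself gives no argument for this lemma. The proof block after it in the appendix is the $n\ge 4$ part of \Cref{thm:special_case_n3_k2}, which only invokes the lemma. The statement is the known factor-width-two characterization of Boman, Chen, Parekh and Toledo, so your repaired argument supplies what the paper omits.
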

		
		\begin{proof} (\(n>k=2\))
			(i): Consider \(X\in \mathcal{D}_n\cap \Szero{n}{2}\) which can be decomposed as \(X  = \mu I_n + M\) where $M$ consists of off-diagonal entries of \(X\).  Notice that \(X\in\Szero{n}{2}\) implies that \(\mu I_n \pm M \succeq 0\), i.e., \(\rho(M) \leq \mu\).
			
			From \Cref{lem:sdd_factorization}, \(X\in \mathcal{D}_n\cap \conv(Q_{n,2})\) is equivalent to that \(X\) is scaled diagonal dominant, that is, there exists a positive vector \(d\in\RR^n_+\) such that the following inequality holds entrywise,
			\[
			|M|d \leq \mu d. 
			\]
			From Perron-Frobenius Theorem, the above inequality is equivalent to
			\[
			\rho(|M|) \leq \mu.
			\]
			We claim that when $n\geq 4$, there exists \(M\in\SS^n\) such that \(\rho(M) < \rho(|M|)\). Let $\mu:= \rho(M)$, then we have \(X: = \mu I_n + M \in \Szero{n}{k}\) but \(X \notin \conv(Q_{n,2})\), which implies that \(\conv(Q_{n,2}) \subset \Szero{n}{2}\).
			
			In order to finish the proof of (i), it remains to prove the above claim. When \(n=4\), we construct 
			\[
			M_4 = \begin{pmatrix}
				0 & 1 & 1 & -1\\
				1 & 0 & -1 & 1\\
				1 & -1 & 0 & -1\\
				-1 & 1 & -1 & 0\\
			\end{pmatrix}
			\]
			and we have \(\sqrt{5} = \rho(M_4)<\rho(|M_4|) = 3\). For \(n>4\), let \(M_n\) be constructed as follows,
			\[
			M_n = \begin{pmatrix}
				M_4&0\\
				0&0
			\end{pmatrix}
			\]
			which satisfies \(\rho(M_n)<\rho(|M_n|)\).\qedhere
			
		\end{proof}
		
		\subsection{Proof of \Cref{thm:Szero_notsubset_Sone}}\label{append:thm_Szero_notsubset_Sone}
		\begin{dfn}
			A matrix $C\in \RR^{n\times n}$ is a conference matrix if it has 0 on the diagonal and \(\pm 1\) off the diagonal, and 
			\[
			C^TC = (n-1)I_n.
			\]
		\end{dfn}
		\begin{lem}[\cite{haemers2022spectral}]\label{lem:conference_matrix}
			When \(n=q+1\) where \(q\) is prime power such that \(q\equiv1(\bmod 4)\), there exists a symmetric conference matrix \(C \in \RR^{n\times n}\), and its spectral radius is \(\sqrt{n-1}\).
		\end{lem}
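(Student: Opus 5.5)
We prove the lemma with the classical Paley construction. It produces an explicit symmetric conference matrix of order $n=q+1$, and the spectral statement then follows from the defining identity $C^TC=(n-1)I_n$.

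\emph{Construction.} Let $\mathbb{F}_q$ be the finite field with $q$ elements, $q\equiv 1 \pmod 4$. Let $\chi:\mathbb{F}_q\to\{0,\pm1\}$ be the quadratic character: $\chi(0)=0$, $\chi(a)=1$ if $a$ is a nonzero square, and $\chi(a)=-1$ otherwise. Index rows and columns by $\{\infty\}\cup\mathbb{F}_q$, so $n=q+1$, and set
\[
C_{\infty\infty}=0,\qquad C_{\infty a}=C_{a\infty}=1,\qquad C_{ab}=\chi(a-b)\quad (a,b\in\mathbb{F}_q).
\]
The diagonal is zero because $\chi(0)=0$, and every off-diagonal entry is $\pm1$. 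The matrix is symmetric because $\chi(b-a)=\chi(-1)\chi(a-b)$, and $\chi(-1)=(-1)^{(q-1)/2}=1$ when $q\equiv 1\pmod 4$. This is the only place where the congruence condition is used.

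\emph{Verifying $C^2=qI_n$.} We use two standard character-sum facts.
\begin{enumerate}
\item $\sum_{b\in\mathbb{F}_q}\chi(b)=0$, since the multiplicative group has equally many squares and non-squares.
\item The Jacobsthal identity: $\sum_{b}\chi(b)\chi(b+d)=-1$ for every $d\neq 0$. To see this, drop the term $b=0$ and write $\chi(b)\chi(b+d)=\chi(b^2)\chi(1+db^{-1})=\chi(1+db^{-1})$. As $b$ ranges over $\mathbb{F}_q^\times$, the element $u=1+db^{-1}$ ranges over $\mathbb{F}_q\setminus\{1\}$. Hence the sum equals $\sum_u\chi(u)-\chi(1)=-1$.
\end{enumerate}
We now compute the entries of $C^2$.
\begin{itemize}
\item $(\infty,\infty)$: there are $q$ entries equal to $1$, so the entry is $q$.
\item $(\infty,a)$: the entry is $0\cdot 1+\sum_b\chi(b-a)=0$ by fact 1.
\item $(a,a)$: the entry is $1+\sum_b\chi(a-b)^2=1+(q-1)=q$.
\item $(a,c)$ with $a\neq c$: the entry is $1+\sum_b\chi(a-b)\chi(c-b)=1-1=0$, by fact 2 after the substitution $b\mapsto a-b$.
\end{itemize}
Thus $C^TC=C^2=(n-1)I_n$, so $C$ is a symmetric conference matrix.

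\emph{Spectral radius.} Since $C$ is symmetric and $C^2=(n-1)I_n$, every eigenvalue $\lambda$ satisfies $\lambda^2=n-1$. Hence the spectral radius is $\sqrt{n-1}$.

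The only non-routine step is the Jacobsthal sum identity; the rest is bookkeeping. This construction is also what later supplies a matrix $M=C$ with $\rho(M)=\sqrt{n-1}$ whose off-diagonal entries all have absolute value $1$, which is what the proof of \Cref{thm:Szero_notsubset_Sone} relies on.
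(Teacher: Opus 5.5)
Your proof is correct. The Paley matrix is symmetric exactly because $\chi(-1)=(-1)^{(q-1)/2}=1$. Your four entry computations for $C^2$, which rely on $\sum_b\chi(b)=0$ and the Jacobsthal sum $\sum_b\chi(b)\chi(b+d)=-1$ for $d\neq 0$, all check out.

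The paper gives no argument for this lemma and simply cites \cite{haemers2022spectral}. The existence part of the statement is the classical Paley construction, which is exactly what you carry out. So your proposal makes the lemma self-contained rather than taking a different mathematical route.

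What your write-up adds is a clean separation of the two claims. The hypothesis that $q$ is a prime power with $q\equiv 1 \pmod 4$ is needed only to produce $C$, and the congruence only to make $C$ symmetric. The spectral radius $\sqrt{n-1}$ holds for every symmetric conference matrix, since $C^2=C^TC=(n-1)I_n$ forces all eigenvalues to be $\pm\sqrt{n-1}$. The citation is shorter, but only existence and $\rho(C)=\sqrt{n-1}$ are used later in the proof of \Cref{thm:Szero_notsubset_Sone}, and your argument supplies both directly.
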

		
		\begin{proof}[Proof of \Cref{thm:Szero_notsubset_Sone}]
			When \(n=q+1\) where \(q\) is prime power such that \(q\equiv1(\bmod 4)\), from \Cref{lem:conference_matrix}, let \(X = \mu I_n + C\), where $C$ is a symmetric conference matrix.  By letting $\mu: = \sqrt{n-1}$, we get \(X \in \Szero{n}{k}\).  Since \(k < \sqrt{n-1}+1\), it follows that
        \[
        n\sqrt{n-1} + n(n-1) = \|X\|_1 > k \tr(X) = kn\sqrt{n-1},
        \]
       Thus, it is true that \(X \notin \Sone{n}{k}\). 
		\end{proof}

        \subsection{Proof of  \Cref{thm:extreme_ray_special}}\label{append:thm_extreme_ray_special}

	\begin{proof}[Proof of \Cref{thm:extreme_ray_special}]
        Assume first that \(X\in\Szero{n}{k}\), and let us show that it is an extreme ray.
        Suppose there exists \(X_1,X_2\in \Szero{n}{k}\) \st \(X = X_1 + X_2\). Let \(M := X_1 - u_1u_1^T =  u_2u_2^T - X_2\). Then we have
		\[
		X_1 = u_1u_1^T + M,\quad X_2 = u_2u_2^T - M
		\]
		Since \(X_1,X_2\in \Szero{n}{k}\), we have \(X_1,X_2\succeq 0\) which implies that \(M =-q_1 u_1u_1^T + q_2u_2u_2^T\) where \(0\leq q_1,q_2\leq 1\). Then we have
		\[
		X_1 = (1-q_1) u_1u_1^T + q_2u_2u_2^T,\quad X_2 = q_1u_1u_1^T + (1-q_2) u_2u_2^T 
		\]
		If \(1>q_1 + q_2\), let \(v:= u_1\), then we have \(v^T(k\diag(X_1)-X_1)v<0\), that is, \(X_1\notin \Szero{n}{k}\); If \(1<q_1 + q_2\), let \(v:= u_2\), then we have \(v^T(k\diag(X_2)-X_2)v<0\), that is, \(X_2\notin \Szero{n}{k}\); 
		If \(1 = q_1 + q_2\), then \(X_1,X_2\) are multiples of \(X\).   Thus, \(X\) is an extreme ray in \Szero{n}{k}. 
		
		It remains to prove that \(X\in\Szero{n}{k}\), which is equivalent to 
		\begin{equation}\label{eq:extreme_ray_opt}
			\min_{v\in\RR^n, \|v\|=1} v^T(k\diag(X)-X)v \geq 0
		\end{equation}
        The solution of \eqref{eq:extreme_ray_opt} can be expressed as \(v = \lambda_1x + \lambda_2y\) with \(\|x\|=\|y\|=1\), where \(x\in \langle u_1,u_2\rangle\) and \(y\in \langle u_1,u_2\rangle^\perp\).
		We begin with proving \(x^T(k\diag(X)-X)x=0\). Suppose \(x=\mu_1u_1 + \mu_2u_2\) where \(\mu_1^2 + \mu_2^2=1\) and \(\langle u_1,u_2\rangle=0\), then we have
        \begin{equation}
            \begin{split}
                & x^T(k\diag(u_1u_1^T)-u_1u_1^T)x \\ =
                & \mu_1^2\Big(k\sum_{i=1}^n (u_1)_i^4 - 1\Big) + \mu_2^2 \Big(k\sum_{i=1}^n (u_1)_i^2(u_2)_i^2 - \langle u_1,u_2\rangle^2\Big)\\
                & + 2\mu_1\mu_2\Big(k\sum_{i=1}^n (u_1)^2_i(u_2)_i - \langle u_1,u_2\rangle\Big)
            \end{split}
        \end{equation}
        and it is also true that
         \begin{equation}
            \begin{split}
                & x^T(k\diag(u_2u_2^T)-u_2u_2^T)x \\ =
                & \mu_2^2\Big(k\sum_{i=1}^n (u_2)_i^4 - 1\Big) 
                 + \mu_1^2\Big(k\sum_{i=1}^n (u_1)_i^2(u_2)_i^2 - \langle u_1,u_2\rangle^2\Big)\\
                 & + 2\mu_1\mu_2\Big(k\sum_{i=1}^n (u_1)_i(u_2)_i^3 - \langle u_1,u_2\rangle\Big)
            \end{split}
        \end{equation}
        From \eqref{eq:extreme_ray_special}, it is true that 
        \(\sum_{i=1}^n(u_1)_i(u_2)_i\big((u_1)^2_i+(u_2)^2_i\big) = 0\), which implies that
        \begin{equation}
            \begin{split}
                x^T(k\diag(X)-X)x &= k\sum_{i=1}^n\big(\mu_1^2(u_1)_i^4+(u_1)_i^2(u_2)_i^2+\mu_2^2(u_2)_i^4\big)-1\\
                &= k\sum_{i=1}^n\big(\tfrac{1}{2}(u_1)_i^4+(u_1)_i^2(u_2)_i^2+\tfrac{1}{2}(u_2)_i^4\big)-1\\
                &= \frac{k}{2}\sum_{i=1}^n\big((u_1)_i^2+(u_2)_i^2\big)^2-1 = 0
            \end{split}
        \end{equation}
        where the second equality follows from \(\sum_{i=1}^n(u_1)_i^4 = \sum_{i=1}^n(u_2)_i^4\) in \eqref{eq:extreme_ray_special}. Then we will show that \(x^T(k\diag(X)-X)y=0\). Observe that
        \begin{equation}
            \begin{split}
                 x^T(k\diag(X)-X)y 
                &= k\sum_{i=1}^n \big((u_1)_i^2 + (u_2)_i^2\big) x_iy_i \\
                &= k \big((u_1)_i^2 + (u_2)_i^2\big) \sum_{i=1}^n x_iy_i = 0
            \end{split}
        \end{equation}
        where the first equality follows from \(\langle u_1,y\rangle=\langle u_2,y\rangle=0\), and the second equality holds because \((u_1)_i^2 + (u_2)_i^2 = (u_1)_j^2 + (u_2)_j^2, \forall i,j\in [n]\).
        It follows that
        \begin{equation}
            \begin{split}
                 v^T (k\diag(X)-X)v =  k \lambda_2^2 \big(y^T\diag(X)y\big)\geq0 
            \end{split}
        \end{equation}
        Thus, the minimum of \eqref{eq:extreme_ray_opt} is obtained when \(\lambda_2=0\), that is, \(v\in \langle u_1,u_2\rangle\), which implies that \(X \in \Szero{n}{k}\).
	\end{proof}
	
		\subsection{Proof of \Cref{thm:extreme_ray_of_ourset_dual}}\label{append:thm_extreme_ray_dual}
		\begin{proof}
			Let \(X = xx^T\) where \(\|x\|_0\leq k\). Without loss of generality, let \(S := [k]\) be the support of \(x\) and let \(S_j := [k-1] \cup \{j\}, \forall j \in [n] \setminus [k]\). We will show that \(X\) is an extreme ray of \((\conv(Q_{n,k}))^*\).
			
			If there exists \(X_1,X_2 \in (\conv(Q_{n,k}))^*\) and \(\lambda_1,\lambda_2 > 0\) such that \(X = \lambda_1X_1 + \lambda_2X_2\), then we have 
			\begin{align}
				X_{S} &= \lambda_1(X_1)_{S} + \lambda_2(X_2)_{S}\\
				X_{S_j} &= \lambda_1(X_1)_{S_j} + \lambda_2(X_2)_{S_j}, \forall j \in  [n] \setminus [k]
			\end{align}
			Since for \(i=1,2\), \((X_i)_{S_j}\succeq 0, \forall j \in  [n] \setminus [k]\), we have
			\begin{equation}\label{eq:extreme_ray_1}
				0 = X_{jj} = (X_1)_{jj} = (X_2)_{jj} , \forall j  \in  [n] \setminus [k].
			\end{equation}
			Then \eqref{eq:extreme_ray_1} implies that \((X_1)_{ij} = (X_2)_{ij} = 0, \forall (i,j) \notin [k] \times [k]\). Notice that \(\rank(X_{S})=1\) and \((X_1)_{S},(X_2)_{S}\) are positive semidefinite. Thus, we have \(X_1,X_2\) are multiples of \(X\), and \(X\) is an extreme ray of \((\conv(Q_{n,k}))^*\).
		\end{proof}

        \subsection{Proof of \Cref{lem:primal_convergence}}\label{append:lem_primal_convergence}
        We begin with the following lemma, see \cite[Prop.4.4]{bonnans2013perturbation}.

        \begin{lem}\label{lem:continuity}
            Let \(F:S\times\Theta\rightarrow\RR\) be a continuous function, where \(S\in\RR^N\) is a compact set. Then the function \(f:\Theta \rightarrow\RR\) such that \(\theta \mapsto \min_{x\in S} F(x,\theta)\) is continous.
        \end{lem}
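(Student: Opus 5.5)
The plan is a Berge-type argument. Write $g(\theta) = \min_{x\in S} F(x,\theta)$. I will show that $g$ is upper semicontinuous and lower semicontinuous at every $\theta_0\in\Theta$, taking $\Theta$ to be a subset of a metric space (a subset of $\SS^n$ in our application) so that sequences suffice. First note that the minimum exists for each $\theta$: $F(\cdot,\theta)$ is continuous on the compact set $S$, and I assume $S$ is nonempty. Hence $g$ is well defined.

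For upper semicontinuity, let $\theta_j\to\theta_0$ and pick a minimizer $x_0\in S$ of $F(\cdot,\theta_0)$. Then $g(\theta_j)\le F(x_0,\theta_j)$, and $F(x_0,\theta_j)\to F(x_0,\theta_0)=g(\theta_0)$ by continuity of $F$ in $\theta$. Therefore $\limsup_j g(\theta_j)\le g(\theta_0)$.

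Lower semicontinuity is the step that needs compactness. Pick minimizers $x_j\in S$ with $g(\theta_j)=F(x_j,\theta_j)$. Choose a subsequence along which $g(\theta_j)$ converges to $\liminf_j g(\theta_j)$. By compactness of $S$, pass to a further subsequence with $x_j\to \hat x\in S$. Joint continuity of $F$ at $(\hat x,\theta_0)$ then gives $F(x_j,\theta_j)\to F(\hat x,\theta_0)\ge g(\theta_0)$, so $\liminf_j g(\theta_j)\ge g(\theta_0)$. The two inequalities together show that $g$ is continuous at $\theta_0$.

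The main obstacle is exactly this lower bound. It is the only place where joint continuity (not merely separate continuity) and compactness of $S$ are used, and the argument fails without them. An alternative that avoids choosing minimizers: restrict to a compact neighborhood $K$ of $\theta_0$ (local compactness holds in $\SS^n$). Then $F$ is uniformly continuous on $S\times K$, which gives $\sup_{x\in S}|F(x,\theta)-F(x,\theta_0)|\to 0$, and since $|g(\theta)-g(\theta_0)|$ is bounded by this supremum, continuity follows directly. I would present the sequential argument and mention the uniform-continuity route as a remark.
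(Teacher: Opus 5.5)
Your argument is correct, but the paper does not prove this lemma at all: it simply invokes \cite[Prop.~4.4]{bonnans2013perturbation}. That proposition is a general result on the continuity of the optimal value of a parametrized problem. It allows a parameter-dependent feasible set, replaces compactness of the feasible set by an inf-compactness hypothesis (level sets uniformly contained in a fixed compact set) plus mild regularity of the feasible-set map, and also yields upper semicontinuity of the optimal solution set.

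Your proof is what that Berge-type argument reduces to when the feasible set is a fixed nonempty compact $S$. Upper semicontinuity comes from testing a fixed minimizer of $F(\cdot,\theta_0)$. Lower semicontinuity comes from extracting a convergent subsequence of minimizers and using joint continuity. The citation buys generality the paper never uses. Your version is self-contained and covers exactly the use in \Cref{lem:primal_convergence}, where the compact set is $V\cap\vX_{\supp(\bar x)}$ and $\theta=C$ ranges over symmetric matrices, so restricting to metric $\Theta$ costs nothing.

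One small fix to your closing remark: the uniform-continuity route does not need local compactness of $\Theta$, and an arbitrary subset of $\SS^n$ need not be locally compact. You can take $K=\{\theta_0\}\cup\{\theta_j : j\in\NN\}$, which is compact for any convergent sequence. Alternatively, use the tube lemma: take finitely many product neighborhoods $U_x\times V_x$ of $(x,\theta_0)$ covering $S\times\{\theta_0\}$ on which $|F-F(x,\theta_0)|<\varepsilon/2$. This gives $\sup_{x\in S}|F(x,\theta)-F(x,\theta_0)|<\varepsilon$ for all $\theta\in\bigcap V_x$, and it removes the metric assumption on $\Theta$ altogether.
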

        
        \begin{proof}[Proof of \Cref{lem:primal_convergence}]
        Without loss of generality, we assume that \(\bar Q = \bar C\), that is, \(\bar\lambda = \vzero\). For \(\{A_i,b_i\}_{i\in[m]}\) at least one \(b_i \neq 0\), we assume that \(b_1=1\). Let \(S:=\supp(\bar x)\). Decompose \(x_S = x' + y\) where \(x'\in \langle\bar x_S\rangle\) and \( y \in \langle\bar x_S\rangle^\perp\). Let \(u:= (A_1)_Sy\), then we have
        \begin{equation}
        \begin{split}
             x_S^T (A_1)_S x_S &= (x')^T(A_1)_S x' + y^T(A_1)_Sy + 2 y^T (A_1)_S x' \\
                       &= \|x'\|^2 + u^T y + 2 u^T x' = 1
        \end{split}
        \end{equation}
        which implies that \( \|x' + u\|^2 = 1 + \|u\|^2 - u^Ty \). It follows that
        \begin{equation}
            \begin{split}
                \|x'\| \leq \|u\| + \sqrt{1 + \|u\|^2 - u^Ty}\leq 1 + \alpha\|y\|
            \end{split}
        \end{equation}
        where \(\alpha:= \|(A_1)_S\| + \|(A_1)_S\big((A_1)_S-I_k\big)\|^{\tfrac{1}{2}}\). 
        
        As \(C\) is sufficiently close to \(\bar C\), we claim that the \(x_S\) belongs a compact set \(V:=\{x_S=x'+y\in\RR^k: x'\in \langle \bar x_S\rangle, y\in  \langle \bar x_S\rangle^\perp, \|y\|\leq 1, \|x'\|\leq 1 + \alpha\|y\|\}\). Suppose \(c\) is the second smallest eigenvalue of \(\bar C_S\) and \(\|C_S - \bar C_S\| \leq \min\{\tfrac{1}{(1+\alpha)^2},\tfrac{1}{\|\bar x\|^2}\}\tfrac{c}{4}\). If \(y \notin V\), it follows that
        \begin{equation}
        \begin{split}
            x_S^T C_S x&\geq  x_S^T \bar C_S x_S - |x_S^T \bar C_S x_S - x_S^T C_S x_S|\\
                   \geq & c\|y\|^2 - \|C_S - \bar C_S\|\|x_S\|^2\\
                   \geq & \big(c - \|C_S - \bar C_S\|\big)\|y\|^2 - \|C_S - \bar C_S\|(1+\alpha \|y\|)^2\\
                   \geq & c/2
        \end{split}
        \end{equation}
        Such a point \(x\) cannot be optimal, because we have
        \begin{equation}
            \begin{split}
                \bar x_S^T C_S \bar x_S \leq \|C_S - \bar C_S\| \|\bar x_S\|^2 \leq c/4.
            \end{split}
        \end{equation}
        We prove that \(x \in V\) which is a compact set. From \Cref{lem:continuity}, it is true that \(x_SC_Sx_S\rightarrow \bar x_S\bar C_S\bar x_S = 0\), as \(C_S\rightarrow \bar C_S\). Then we obtain
        \[
        c\|y\|^2 = x_S^T \bar C_S x_S \leq  x_S^T  C_S x_S  + \|C_S - \bar C_S\|\|x_S\|^2 \rightarrow 0, \;\text{as}\; C_S\rightarrow \bar C_S
        \]
        Thus, we have \(x\rightarrow \bar x\) as \(C \rightarrow \bar C\), since \(x_i = \bar x_i = 0, \forall i  \in [n]\setminus S\).
    \end{proof}

		\subsection{Proof of \Cref{lem:rank_one_case}}\label{append:lem_rank_one_case}
		\begin{proof}
			Suppose $|\sigma_1|\geq|\sigma_2|>\cdots\geq |\sigma_n|$. Let \(\sigma_{(k)} =(\sigma_1,\dots,\sigma_k)\), and \(\sigma_{(n-k)} =(\sigma_{k+1},\dots,\sigma_n)\). Let $\bar u(\tau) = (u(\tau);\tilde u(\tau))=\sqrt{\frac{w^{(k)}}{k}}(\frac{\tau}{\sigma_1},\cdots,\frac{\tau}{\sigma_k};\frac{\sigma_{k+1}}{\tau},\cdots,\frac{\sigma_{n}}{\tau})$ where $w^{(k)} = \sum_{i=1}^k\sigma_i^2$,. Then we are going to show that there exits $\tau^* \in (|\sigma_{k+1}|,|\sigma_{k}|)$ such that 
			\[
			\lambda_{max} \Big(k\diag(\bar u(\tau^*)\bar u(\tau^*)^T) - \bar u(\tau^*)\bar u(\tau^*)^T + \Sigma\Big) = w^{(k)}.
			\]
			Let $V = k\diag(\bar u(\tau^*)\bar u(\tau^*)^T) - \bar u(\tau^*)\bar u(\tau^*)^T + \Sigma$. It follows that $s = (\sigma_1,\cdots,\sigma_k,0,\cdots,0)^T$ satisfies
			\[
			Vs = w^{(k)}s, \; \text{and} \;(V - ss^T) s = 0.
			\]
			Thus, it remains to show that 
			\[
			\lambda_{max}(V - ss^T) \leq w^{(k)}
			\]
			Consider
			\[
			V  - ss^T = \begin{pmatrix}
				W_{11}&W_{12}\\
				W_{21}&W_{22}
			\end{pmatrix}
			\]
			and 
			\[
			w^{(k)} I_n - (V  - ss^T) = \begin{pmatrix}
				w^{(k)} I_k- W_{11}&-W_{12}\\
				-W_{21}& w^{(k)} I_{n-k} - W_{22}
			\end{pmatrix}
			\]
			From the structure of \(\bar u(\tau)\), we have
			\[
			w^{(k)} I_{n-k}- W_{22} \succ 0
			\]
			In order to show that
			\[
			w^{(k)} I_n - (V  - ss^T)\succeq 0  
			\]
			it is equivalent to show
			\begin{equation}\label{eq:lem_4_1}
			    (w^{(k)} I_{n-k}- W_{22}) - W_{21}\Big(w^{(k)} I_k - W_{11}\Big)^{-1}W_{12}^T\succeq 0
			\end{equation}
			Since $w^{(k)} I_k - W_{11}$ is a diagonal matrix plus a rank-1 matrix, we can get its inverse analytically, that is, 
			\begin{equation}
				\begin{split}
					(w^{(k)}I_k - W_{11})^{-1} &= \Big(D+u(\tau)u(\tau)^T\Big)^{-1}\\
					&= D^{-1} - \frac{D^{-1}u(\tau)u(\tau)^TD^{-1}}{1 + u(\tau)^TD^{-1}u(\tau)}
				\end{split}
			\end{equation}
			where $D=w^{(k)}I_k - k\diag(u(\tau)u(\tau)^T)$. Then we have
			\begin{equation}
				\begin{split}
					&W_{21}\Big(w^{(k)} I_k - W_{11}\Big)^{-1}W_{12}^T \\
					&=\Big( \hat\sigma^TD^{-1}\hat\sigma- \frac{\gamma^2}{1 + u(\tau)^TD^{-1}u(\tau)}\Big)\sigma_{(n-k)}\sigma_{(n-k)}^T
				\end{split}
			\end{equation}
			where $\hat \sigma_i = \sigma_i - \frac{w^{(k)}}{k}\frac{1}{\sigma_i}, i=1,\dots,k$, and  
			\[
			\gamma = \hat\sigma^TD^{-1}u(\tau) = \sqrt{\frac{1}{w^{(k)}k}}\sum_{i=1}^k\frac{\big(\tau - \frac{w^{(k)}}{k}\frac{\tau}{\sigma_i^2}\big)}{1 - \frac{\tau^2}{\sigma_i^2}}.
			\]
			  We claim that  
			as $\tau \rightarrow |\sigma_k|$, 
			\begin{equation}\label{eq:psd_condition}
				\begin{split}
					&(w^{(k)} I_{n-k}- W_{22}) - W_{21}\Big(w^{(k)} I_k - W_{11}\Big)^{-1}W_{12}^T\\
					&= \tilde D + \eta\sigma_{(n-k)}\sigma_{(n-k)}^T \succeq 0
				\end{split}
			\end{equation}
			where $\eta = \frac{w^{(k)}}{k\tau}-1-\big( \hat\sigma^TD^{-1}\hat\sigma- \frac{\gamma^2}{1 + u(\tau)^TD^{-1}u(\tau)}\big)$, and
			\[
			\tilde D = w^{(k)}\begin{pmatrix}
				1 - \frac{\sigma_{k+1}^2}{\tau^2} & & &\\
				&1 - \frac{\sigma_{k+2}^2}{\tau^2}& & \\
				&&\ddots&\\
				&&&1 - \frac{\sigma_{n}^2}{\tau^2}
			\end{pmatrix}\succ 0
			\]
			In order to prove the claim, it is enough to show that $\eta\geq0$ as $\tau \rightarrow |\sigma_k|$. Observe that
			\begin{equation}
				\begin{split}
					\hat\sigma D^{-1}\hat\sigma = \frac{1}{w^{(k)}} \sum_{i=1}^k\frac{(\sigma_i - \frac{w^{(k)}}{k}\frac{1}{\sigma_i})^2}{1 - \frac{\tau^2}{\sigma_i^2}}
				\end{split}
			\end{equation}
			Without loss of generality, we assume $|\sigma_k|=1$ and then it follows that
			\begin{equation}
				\begin{split}
					&\lim\limits_{\tau\rightarrow1} \Big(\hat\sigma D^{-1}\hat\sigma - \frac{\gamma^2}{1 + u(\tau)^TD^{-1}u(\tau)}\Big)\\
					&=\frac{1}{w^{(k)}}\sum_{i=1}^{k-1} \frac{\sigma_i^2 + \frac{1}{\sigma_i^2} - 2}{1-\frac{1}{\sigma_i^2}} + \frac{k}{w^{(k)}}\big(1 - \frac{w^{(k)}}{k}\big)^2\\
					&=w^{(k)} - k + \frac{k}{w^{(k)}}\big(1 - \frac{w^{(k)}}{k}\big)^2\\
					& =  \frac{w^{(k)}}{k}-1
				\end{split}
			\end{equation}
			Thus, we have $\eta\geq 0$ as $\tau\rightarrow 1$ which implies that \eqref{eq:psd_condition} holds. 
		\end{proof}
		
		\subsection{Dual cones}\label{append_dual_cones} In this section, we will derive the dual cones of \(\Szero{n}{k}, \Sone{n}{k}\), and \(\conv(Q_{n,k})\). Let \(\mathcal{L}: \SS^n \mapsto \SS^n, \mathcal{L}(Z) = k\diag(Z)-Z\). Notice that \(\mathcal{L} = \mathcal{L}^*\), we have
		\[
		(\Szero{n}{k})^* = (\SS^n_+ \cap \mathcal{L}^{-1}(\SS^n_+))^* = \overline{\SS^n_+ + \mathcal{L}^*(\SS^n_+)} = \overline{\SS^n_+ + \mathcal{L}(\SS^n_+)}
		\]
		which is equivalent to
		\[
		(\Szero{n}{k})^* = \{Y + k\diag(Z) - Z: Y \succeq 0, Z \succeq 0\}.
		\]
		
		There is an equivalent formulation for \Sone{n}{k}:
		\[
		\Sone{n}{k} = \SS^n_+\cap \mathcal{C}_1 
		\]
		where \(\mathcal{C}_1 := \{X\in\SS^n: (kI_n - Z) \bullet X \geq 0, \|Z\|_\infty \leq 1, Z\in \SS^n\}\). Notice that
		\[
		(\mathcal{C}_1 )^* = \{ kI_n - Z: \|Z\|_\infty \leq 1, Z\in \SS^n\}
		\]
		Then we have
		\[
		(\Sone{n}{k})^* = \overline{\SS^n_+ + (\mathcal{C}_1 )^*}
		\]
		which is equivalent to
		\[
		(\Sone{n}{k})^* = \{Y + \alpha kI_n - Z: \alpha\geq 0, Y \succeq0, Z\in\SS^n, \|Z\|_\infty \leq \alpha\}.
		\]
		Then we discuss \(\conv(Q_{n,k})^*\). From the definition of dual cone, we have
		\begin{equation}
			\begin{split}
				\conv(Q_{n,k})^* &= \{Y\in\SS^n: Y \bullet X \geq 0, \forall X\in \conv(Q_{n,k})\}\\
				&=\{Y\in\SS^n: Y_{S}\succeq 0, |S|\leq k \}
			\end{split}
		\end{equation}
		
		\subsection{Proof of \Cref{lem:Sz_rank_one}}\label{append:lem_Sz_rank_one}
		\begin{proof}
			Suppose that \(\|x\|_0=p\), and \(\supp(x)=[p]\). From the definition of \Sz{n}{k}, we have
			\begin{equation}\label{eq:extreme_ray_Sint}
				\|X_{i,:}\|^2 = x_i^2 \big(\sum_{j=1}^p x_i^2\big) = X_{ii} \tr(X) \leq X_{ii}z_i,\;\forall i\in [n]
			\end{equation}
			Since \(X_{ii}\geq 0, z_i\geq 0,\;\forall i\in [n]\), \eqref{eq:extreme_ray_Sint} forces that \(z_i = \tr(X),\;\forall i\in [p]\), and \(z_i = 0,\;\forall i\in [n]\setminus[p]\), which implies that 
			\[
			\sum_{i=1}^nz_i = p\tr(X)
			\]
			If \(p>k\), it would violate the constraint on the summation of \(z\) in \Sint{n}{k}. Thus, if \(X = xx^T\in \Sint{n}{k}\), then we have \(\|x\|_0 \leq k\). 
		\end{proof}
		
		\subsection{Proof of \Cref{thm:Sz_perturbation}}\label{append:thm_Sz_perturbation}
		We start with the following cone, 
		\small{\begin{equation}
				\begin{split}
					\Sint{n}{k} = \{X: \;\exists z \in \RR^n \st
					&\|X_{i,:}\|^2\leq X_{ii}z_i,\;0\leq z_i\leq \tr(X), i\in[n]\\
					&\sum_{i=1}^n z_i \leq k \tr(X),X\succeq 0\}
				\end{split}
		\end{equation}}
        
        \begin{lem}\label{lem:Sint}
		Let \(2k<n\), and let $x \in \RR^n$ with $\|x\|=1$, \(\|x\|_0=k\) and not all its non-zero coordinates identical.
        Then
		\begin{enumerate}
			\item 
            \(x x^T + \epsilon \, w w^T \notin \Szero{n}{k}\)
            for any $w \!\in\! \RR^n$, $\|w\|=1$ ,$\|w\|_0\!=\!n$ and any $\epsilon \!>\! 0$.
			\item
            Suppose that \(w\) with $\|w\|=1$, \(\|w\|_0=n\), satisfies \(\sum_{i\in\supp(x)}(1 - |\tfrac{w_i}{x_i}|)^2 > n - k\), then we have
            \(x x^T + \epsilon \, w w^T \in \Sint{n}{k}\)
            $\|w\|_0 \!=\! n$ for small enough $\epsilon \!>\! 0$.
		\end{enumerate}
	\end{lem}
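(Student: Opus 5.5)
\emph{Part (1).} I would reuse the test-vector argument from \Cref{lem:cardinality} and add mass off the support so that the $\epsilon\, ww^T$ term becomes negative. Put $S:=\supp(x)$ with $|S|=k$ and $X:=xx^T+\epsilon\, ww^T$. Define $y\in\RR^n$ by $y_i=1/x_i$ for $i\in S$ and $y_j=t/w_j$ for $j\notin S$; this is legitimate because $\|w\|_0=n$.

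Since $x_j=0$ off $S$, the rank-one part contributes
\[
k\sum_{i\in S}x_i^2y_i^2-(y^Tx)^2=k\cdot k-k^2=0,
\]
independently of $t$. Write $a:=\sum_{i\in S}w_i/x_i$ and $B:=\sum_{i\in S}w_i^2/x_i^2$. The $\epsilon$-part then equals
\[
\epsilon\Big(k\big(B+(n-k)t^2\big)-\big(a+(n-k)t\big)^2\Big).
\]
This is a quadratic in $t$ with leading coefficient $\epsilon(n-k)(k-(n-k))$, which is strictly negative because $2k<n$. Taking $t$ large gives $y^T(k\diag X-X)y<0$, so $X\notin\Szero{n}{k}$ for every $\epsilon>0$. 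This part uses only $2k<n$ and $\|w\|_0=n$.

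\emph{Part (2).} Here I would exhibit an explicit $z$ for the relaxed cone $\Sint{n}{k}$, taking each $z_i$ as small as allowed: $z_i:=\|X_{i,:}\|^2/X_{ii}$. The bound $z_i\le\tr X$ holds automatically, since for PSD $X$ one has $\|X_{i,:}\|^2=e_i^TX^2e_i\le\lambda_{\max}(X)X_{ii}\le \tr(X)X_{ii}$. For $j\notin S$ the row is $X_{j,:}=\epsilon w_j w^T$, so $z_j=\epsilon$, and these coordinates contribute $(n-k)\epsilon$ in total. For $i\in S$ I would expand to first order in $\epsilon$, using $\|x\|=\|w\|=1$:
\[
z_i=\frac{x_i^2+2\epsilon x_iw_i\langle x,w\rangle+\epsilon^2w_i^2}{x_i^2+\epsilon w_i^2}
=1+\epsilon\Big(2\tfrac{w_i}{x_i}\langle x,w\rangle-\tfrac{w_i^2}{x_i^2}\Big)+O(\epsilon^2).
\]

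The remaining requirement is $\sum_i z_i\le k\tr X=k(1+\epsilon)$. After cancelling the constant $k$ this is a first-order inequality: $(n-k)+\sum_{i\in S}\big(2\tfrac{w_i}{x_i}\langle x,w\rangle-\tfrac{w_i^2}{x_i^2}\big)<k$. If it holds strictly, it persists for all small $\epsilon>0$.

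\emph{Main obstacle.} The hard step is deriving this first-order inequality from the stated hypothesis $\sum_{i\in S}(1-|w_i/x_i|)^2>n-k$. Expanding the hypothesis gives $k-2\sum_{i\in S}|w_i/x_i|+B>n-k$, which contains $B$ with a plus sign, whereas my inequality contains $-B$ together with the cross term $2\langle x,w\rangle a$. So the two do not match directly. I would first check whether the intended construction is a different feasible choice of $z$, namely one keeping $z_i$ at its cap $\tr X$ on part of $S$ and exploiting slack elsewhere. I would then control the cross term $\langle x,w\rangle a$ by Cauchy--Schwarz and by a sign choice of $w$ (replacing $w$ by $-w$ leaves $X$ unchanged).

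If the hypothesis still does not close the gap, I would isolate the precise first-order condition above as the genuine criterion. The assumption that the nonzero coordinates of $x$ are not all identical should enter at this point, by ensuring the relevant quantities are not forced to equality, as in the Cauchy--Schwarz equality case.
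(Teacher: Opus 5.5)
Part (1) is correct and uses the paper's argument. The paper sets the on-support part of the test vector to zero, $y=(0,\dots,0,w_{k+1}^{-1},\dots,w_n^{-1})$, which gives $y^T(k\diag X-X)y=\epsilon(n-k)(2k-n)<0$ directly, so your parameter $t$ is not needed.

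In part (2), your choice $z_i=\|X_{i,:}\|^2/X_{ii}$ is right, and so are the check $z_i\le\tr X$, the off-support values $z_j=\epsilon$, and the first-order reduction. The gap is that you stop at the step that finishes the proof, and the obstacle you describe is a misreading. Your condition $(n-k)+2\langle x,w\rangle a-B<k$ is equivalent to
\[
k+B-2\langle x,w\rangle a>n-k .
\]
So $B$ has the same sign as in the expanded hypothesis $k+B-2\sum_{i\in S}|w_i/x_i|>n-k$. Only the cross terms differ, and Cauchy--Schwarz settles them. Since $\|x\|=\|w\|=1$,
\[
\langle x,w\rangle a\le|\langle x,w\rangle|\,|a|\le\sum_{i\in S}|w_i/x_i|,
\qquad\text{hence}\qquad
k+B-2\langle x,w\rangle a\ge\sum_{i\in S}\bigl(1-|w_i/x_i|\bigr)^2>n-k .
\]
None of your fallback ideas is needed. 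You do not need another $z$, or a sign change of $w$ (which leaves $\langle x,w\rangle a$ unchanged anyway). You also do not need the assumption that the nonzero entries of $x$ are not all equal; the paper does not use it either. As written, however, your proposal does not prove (2), and it even considers replacing the hypothesis.

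The paper uses essentially the same $z$, but bounds each row before any expansion. Using $2\epsilon x_iw_i\langle x,w\rangle\le 2\epsilon|x_iw_i|$, it gets $\|X_{i,:}\|^2\le(|x_i|+\epsilon|w_i|)^2=(\tr X)X_{ii}-\epsilon(|x_i|-|w_i|)^2$. So $z_i=\tr X-\delta_i$ on $S$ with slack $\delta_i=\epsilon(|x_i|-|w_i|)^2/X_{ii}$. The requirement $\sum_{i\in S}\delta_i\ge(n-k)\epsilon$ then becomes the hypothesis as $\epsilon\to0$, with no Taylor expansion. (The paper's display writes $2\epsilon x_iw_i$ and $(x_i+\epsilon w_i)^2$, which is valid only when $x_iw_i>0$. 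The absolute-value form is the one its final condition actually uses.)

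Your route gives a sharper statement. Your $z_i$ is the smallest admissible choice and always lies below the cap, so membership in $\Sint{n}{k}$ is exactly $\sum_i\|X_{i,:}\|^2/X_{ii}\le k\tr X$. Your first-order condition $k+B-2\langle x,w\rangle a>n-k$ is therefore essentially the sharp criterion, and the lemma's hypothesis is one sufficient condition for it.
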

		\begin{proof}
			Throughout the proof, we assume \( \supp(x) = [k]\). We begin with the first part, i.e., \(U := x x^T + \epsilon \, w w^T \notin \Szero{n}{2}\). Let \(X = xx^T\) and \(W = ww^T\). Define \(\cD (U): \SS^n\mapsto\SS^n, U\mapsto k\diag(U)-U\). It is enough to show that \(\cD(U)=\cD(X) + \epsilon\cD(W)\) has at least one negative eigenvalue.  Let \(y = (\vzero,w_{k+1}^{-1},\dots,w_{n}^{-1})\), then we have
			\begin{equation}
				y^T \cD(U) y = \epsilon y^T \cD(W) y = \epsilon (2k-n)(n-k)<0
			\end{equation}
			which implies that \(\cD(W)\) has at least one negative eigenvalue, that is, \(U\notin \Szero{n}{k}\).
			
			For the second part, for any \(i \in [k]\) we observe that 
			\begin{equation}
				\begin{split}
					\sum_{j=1}^n (x_ix_j+ \epsilon w_iw_j)^2
					= & x^2_i  + \epsilon^2w^2_i  + 2\epsilon x_iw_i \langle x,w\rangle\\
					< & x^2_i  + \epsilon^2w^2_i  + 2\epsilon x_iw_i\\
					= & \Big(x_i + \epsilon w_i \Big)^2
                    \leq  (\tr(X)-\delta_i)X_{ii}
				\end{split}
			\end{equation}
            where \(\delta_i \in [0, \epsilon\tfrac{(x_i - w_i)^2}{x_i^2 + \epsilon w_i^2}]\). We also have that
			\begin{equation}
				\begin{split}
					\sum_{j=1}^n (\epsilon w_iw_j)^2
					= & \epsilon^2 \cdot w^2_i 
					\leq \delta'_i \cdot \epsilon w^2_i
				\end{split}
			\end{equation}
            where \(\delta'_i \geq \epsilon\). Since we have 
            \[
            \sum_{i\in\supp(x)}\big(1-|\tfrac{w_i}{x_i}|\big)^2>n-k,
            \]
            there exists \(\{\delta_i\}_{i\in[k]}\) and \(\{\delta'_i\}_{i\in[n]\setminus[k]}\) such that
			\begin{equation}
				\sum_{i\in [k]} \delta_i = \sum_{i\in [n]\setminus [k]} \delta'_i
			\end{equation}
			for any \(\epsilon\) satisfies
            \[
            \sum_{i\in\supp(x)} \frac{(|x_i| - |w_i|)^2}{x_i^2 + \epsilon w_i^2} \geq n - k
            \]
            Thus, it is true that \(X \in \Sint{n}{k}\) 
		\end{proof}
		
		Then we proceed to the proof of \Cref{thm:Sz_perturbation}.
		\begin{proof}[Proof of \Cref{thm:Sz_perturbation}]
			Throughout the proof, we assume \( \supp(x_1) = [k]\). From \Cref{lem:Sint}, \eqref{eq:compare_ourset} is true. We start with the proof of \eqref{eq:compare_Sz}. For any \(i \in [k]\) we observe that
			\begin{equation}\label{eq:compare_Sz_1}
				\begin{split}
					\Big(\sum_{j=1}^n|X_{ij}|\Big)^2 &=  \Big(\sum_{j=1}^n|x_ix_j + \epsilon w_iw_j|\Big)^2\\
					&\leq \Big(\sum_{j=1}^n|x_ix_j| + |\epsilon w_iw_j|\Big)^2\\
					&=\Big(|x_i|\|x\|_1+\epsilon|w_i|\|w\|_1\Big)^2 \\
					&\leq  \big(\|x\|_1^2 + \epsilon\|w\|_1^2\big) \big(x_i^2 + \epsilon w_i^2 \big)\\
				\end{split}
			\end{equation}
			Since we have \(\|x\|_1 \leq \sqrt{k}\|x\|\),  then \eqref{eq:compare_Sz_1} implies 
			\begin{equation}
				\begin{split}
					\Big(\sum_{j=1}^n|X_{ij}|\Big)^2 &\leq  k \big(1 + \epsilon - \delta_i\big) \big(x_i^2 + \epsilon w_i^2 \big)\\
					&= k (\tr(X)-\delta_i) X_{ii}
				\end{split}
			\end{equation}
             where \(\delta_i \in [0, \epsilon\tfrac{(|x_i| - |w_i|)^2}{x_i^2 + \epsilon w_i^2}]\). Then for any \(i \in [n]\setminus [k]\), there exists \(\delta'_i>0\) such that 
			\begin{equation}
				\begin{split}
					\Big(\sum_{j=1}^n|X_{ij}|\Big)^2 & =   \epsilon^2\|w\|_1^2\cdot(x_2)_i^2 \leq \delta'_i \epsilon \cdot k (w)_i^2 = \delta'_i \epsilon \cdot k X_{ii}
				\end{split}
			\end{equation}
            where \(\delta_i'\geq \epsilon\). Since we have 
            \[
            \sum_{i\in\supp(x)}\big(1-|\tfrac{w_i}{x_i}|\big)^2>n-k,
            \]
            there exists \(\{\delta_i\}_{i\in[k]}\) and \(\{\delta'_i\}_{i\in[n]\setminus[k]}\) such that
			\begin{equation}
				\sum_{i\in [k]} \delta_i = \sum_{i\in [n]\setminus [k]} \delta'_i
			\end{equation}
			for any \(\epsilon\) satisfies
            \[
            \sum_{i\in\supp(x)} \frac{(|x_i| - |w_i|)^2}{x_i^2 + \epsilon w_i^2} \geq n - k
            \]
            Thus, it is true that \(X \in \Sz{n}{k}\) 
		\end{proof}
		
	\end{appendices}

\end{document}